\numberwithin{equation}{section}
\theoremstyle{plain}
\newtheorem{theorem}{Theorem}[section]
\newtheorem{proposition}[theorem]{Proposition}
\newtheorem{corollary}[theorem]{Corollary}
\newtheorem{lemma}[theorem]{Lemma}
\theoremstyle{remark}
\newtheorem*{remark}{Remark}
\newtheorem*{remarks}{Remarks}
\newcommand{\CC}{{\mathbb C}}
\newcommand{\DD}{{\mathbb D}}
\newcommand{\NN}{{\mathbb N}}
\newcommand{\TT}{{\mathbb T}}
\newcommand{\ZZ}{{\mathbb Z}}
\newcommand{\cB}{{\cal B}}
\newcommand{\cD}{{\cal D}}
\newcommand{\cH}{{\cal H}}
\newcommand{\cK}{{\cal K}}
\DeclareMathOperator{\dist}{dist}
\DeclareMathOperator{\spn}{span}
\DeclareMathOperator{\hol}{Hol}
\DeclareMathOperator{\bmoa}{BMOA}
\DeclareMathOperator{\vmoa}{VMOA}
\renewcommand{\hat}{\widehat}
\begin{document}

\title{Summability and duality}

\author{Soumitra Ghara}
\address{D\'epartement de math\'ematiques et de statistique, Universit\'e Laval,
Qu\'ebec City (Qu\'ebec),  Canada G1V 0A6.}
\email{ghara90@gmail.com}

\author{Javad Mashreghi}
\address{D\'epartement de math\'ematiques et de statistique, Universit\'e Laval,
Qu\'ebec City (Qu\'ebec),  Canada G1V 0A6.}
\email{javad.mashreghi@mat.ulaval.ca}

\author{Thomas Ransford}
\address{D\'epartement de math\'ematiques et de statistique, Universit\'e Laval,
Qu\'ebec City (Qu\'ebec),  Canada G1V 0A6.}
\email[Corresponding author]{thomas.ransford@mat.ulaval.ca}

\thanks{Ghara supported by a Fields--Laval postdoctoral fellowship. Mashreghi supported by an NSERC Discovery Grant. Ransford  supported by grants from NSERC and the Canada Research Chairs program.}

\begin{abstract}
We formalize the observation that the same summability methods converge in a Banach space $X$ and its dual $X^*$.
At the same time we determine conditions under which these methods converge in the weak and weak*-topologies on  $X$ and $X^*$ respectively. We also derive a general  limitation theorem, which yields a necessary condition for the convergence of a summability method in $X$.
These  results are then illustrated by applications to a wide variety of function spaces, including spaces of continuous functions, Lebesgue spaces, the disk algebra, Hardy and Bergman spaces, the BMOA space, the Bloch space, and de Branges--Rovnyak spaces. Our approach shows that all these applications flow from just two abstract theorems.
\end{abstract}

\keywords{Summability,  Limitation theorem, Ces\`aro mean, Banach space, Dual space}

\makeatletter
\@namedef{subjclassname@2020}{\textup{2020} Mathematics Subject Classification}
\makeatother

\subjclass[2020]{Primary 46A35, Secondary 30H10, 30H20, 30H45}

\maketitle

\section{Introduction}\label{S:intro}

Let $X$ be a Banach space of holomorphic functions on the open unit disk~$\DD$,
and suppose that $X$ contains the polynomials.
Every function $f\in X$ has a Taylor expansion $f(z)=\sum_{j\ge0}a_jz^j$,
which converges locally uniformly on $\DD$ to $f(z)$.
However, it can happen that the series fails to converge to $f$ in the norm of $X$.
This is the case, for example, whenever polynomials are not dense in $X$, 
but it may occur even when they are dense.
Here is a short list of examples illustrating various possibilities, ranging from `best' to `worst'.

\begin{enumerate}[(1)]
\item If $X$ is the Hardy space $H^2$, then the Taylor series of $f$ does converge to $f$ in the norm of $X$.
The same is true if $X$ is the Dirichlet space or the Bergman space.
\item If $X$ is the disk algebra $A(\DD)$, then the Taylor series of $f$ may fail to converge in the norm of $X$
(du Bois-Reymond's example),
but its Ces\`aro means do converge in norm (Fej\'er's theorem).
\item If $X$ is a de Branges--Rovnyak space $\cH(b)$, then, for certain choices of $b$ and $f$, the Ces\`aro means may fail to converge in norm, though polynomials are still dense in $X$ (see \cite{EFKMR16}).
\item If $X$ fails to have the bounded approximation property, then no lower-triangular summation
method can converge in norm for every function, even though polynomials may still be  dense in $X$
(see \cite{MR19}). 
\end{enumerate}

In this article we are mainly interested in cases like (2) and (3), where 
some summability methods work and others do not, and the problem is to determine the range of methods 
that do work. Our starting point is the fact that, typically, the same methods tend to work in $X$ and 
in its dual $X^*$. This is because the convergence of a summability method often boils down to whether a certain sequence of summation operators  is uniformly bounded in norm, and an operator has the same norm as its adjoint.
We formalize this idea, at the same time linking it to weak- and weak*-convergence in $X$ and $X^*$ respectively. We also derive a general limitation theorem, namely a necessary condition for the convergence of summability method in a given Banach space. The proofs of these results are carried out in two steps: 
in \S\ref{S:operator} we establish a general operator-theoretic result, which is then used 
in \S\ref{S:summability} to derive the abstract summability theorems.

The rest of the article is devoted to various examples and applications of these  results.
In \S\ref{S:continuous} we consider  continuous-function and Lebesgue spaces, as well as the disk algebra.
In \S\ref{S:HardyBergman} we treat the Hardy and Bergman spaces and their relatives, $\bmoa$ and the Bloch space.
Finally, in \S\ref{S:Hilbert}, we consider reproducing kernel Hilbert spaces of holomorphic functions, and, in particular, de Branges--Rovnyak spaces.
Some of the applications are already known, others are slight generalizations of known results, and some are completely new.
Our approach shows that they all flow from just two abstract theorems.

\section{Operator theory}\label{S:operator}

In what follows, $X$ is a real or complex Banach space. 
We write $X^*$ for the dual space of $X$, 
and $\langle\cdot,\cdot\rangle$ for the duality pairing between $X$ and $X^*$.
We use $w$ and $w^*$ to denote the weak and weak* topologies on $X$ and $X^*$ respectively.
Lastly, given a bounded operator $T$ on $X$, 
we write $T^*$ for the adjoint operator on $X^*$, defined by the relation
\[
\langle x,T^*\phi\rangle=\langle Tx,\phi\rangle \quad(x\in X,~\phi\in X^*).
\]

The purpose of this section is to establish the following result.

\begin{theorem}\label{T:norm-norm}
Let $(T_n)_{n\ge0}$ be a sequence of bounded, finite-rank operators on $X$
such that
\begin{equation}\label{E:norm-norm}
T_nT_m(X)\subset T_m(X) 
\quad\text{and}\quad
T_n^*T_m^*(X^*)\subset T_m^*(X^*)
\qquad(m,n\ge0).
\end{equation}
Let  
\[
Y:=\overline{\spn(\cup_{m\ge0}T_m(X))}
\quad\text{and}\quad
Z:=\overline{\spn(\cup_{m\ge0}T_m^*(X^*))},
\]
where the closures are taken in the norm topologies of $X$ and $X^*$ respectively.
Then the following statements are equivalent:
\begin{enumerate}[\normalfont\rm(i)]
\item $T_n x\to x$ in $(X,w)$ for all $x\in X$;
\item $T_n x\to x$ in $(X,\|\cdot\|)$ for all $x\in X$;
\item $T_n^*\phi\to\phi$ in $(X^*,w^*)$ for all $\phi\in X^*$;
\item $T_n^*\phi\to \phi$ in $(X^*,w^*)$ for all $\phi\in Z$, and $Z$ is $w^*$-sequentially dense in $X^*$ and $Y=X$;
\item $T_n^*\phi\to \phi$ in $(X^*,\|\cdot\|)$ for all $\phi\in Z$, and  $Z$ is $w^*$-sequentially dense in $X^*$
and $Y=X$.
\end{enumerate}
If, further, $X$ is reflexive, then these are equivalent to:
\begin{enumerate}[\normalfont\rm(vi)]
\item $T_n^*\phi\to\phi$ in $(X^*,\|\cdot\|)$ for all $\phi\in X^*$.
\end{enumerate}
\end{theorem}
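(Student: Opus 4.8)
The plan is to prove the cyclic chain (ii)$\Rightarrow$(i)$\Rightarrow$(iii)$\Rightarrow$(iv)$\Rightarrow$(ii), together with the side branch (iii)$\Rightarrow$(v)$\Rightarrow$(iv), and finally to deduce (vi) in the reflexive case from the equivalence (i)$\Leftrightarrow$(ii) applied to the dual system. The trivial implications are (ii)$\Rightarrow$(i) and (v)$\Rightarrow$(iv), since norm convergence implies weak, respectively weak*, convergence. The equivalence of the weak and weak* statements is also immediate: the identity $\langle T_nx,\phi\rangle=\langle x,T_n^*\phi\rangle$ shows at once that (i) and (iii) assert exactly the same thing, and it also shows that under (i) every $x$ lies in $Y$ (being the weak limit of the sequence $T_nx\in Y$, while the norm-closed subspace $Y$ is weakly closed), so that $Y=X$ holds whenever (i) or (iii) does.

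The two workhorses are the uniform boundedness principle and the finite-dimensionality of the ranges. First I would note that (i) yields $\sup_n\|T_n\|<\infty$, since each sequence $(T_nx)$ is weakly convergent, hence norm bounded. For (i)$\Rightarrow$(ii) I would use that the set $\{x:T_nx\to x\text{ in norm}\}$ is norm closed (this is where $\sup_n\|T_n\|<\infty$ enters) and contains every range $T_m(X)$: for $z\in T_m(X)$ the invariance $T_nT_m(X)\subset T_m(X)$ keeps the entire sequence $(T_nz)$ and its limit inside the finite-dimensional space $T_m(X)$, on which the weak and norm topologies coincide, so weak convergence upgrades to norm convergence; since $Y=X$, this closed set is all of $X$. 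The forward implications on the dual side are symmetric: (iii)$\Rightarrow$(v) runs the same finite-dimensional upgrade on the spaces $T_m^*(X^*)$ (using the second invariance in \eqref{E:norm-norm} and the fact that weak* and norm agree on these finite-dimensional subspaces), extended to $Z$ by closedness and $\sup_n\|T_n^*\|=\sup_n\|T_n\|<\infty$. The ancillary conditions $Y=X$ and the $w^*$-sequential density of $Z$ demanded in both (iv) and (v) come for free from (iii): the first as above, and the second because each $\phi\in X^*$ is the weak* limit of $T_n^*\phi\in Z$. This gives (iii)$\Rightarrow$(v) and (iii)$\Rightarrow$(iv).

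The heart of the matter is the closing implication (iv)$\Rightarrow$(ii), and the main obstacle is that $Z$ is only assumed $w^*$-sequentially dense, not norm dense, so convergence on $Z$ cannot be transferred to $X^*$ by a routine density-plus-boundedness argument; in particular it is not clear a priori that $\sup_n\|T_n\|<\infty$. I would establish this bound as follows. Weak* convergence on $Z$ gives $\sup_n\|T_n^*\phi\|<\infty$ for each $\phi\in Z$, so uniform boundedness on the Banach space $Z$ yields $M:=\sup_n\|T_n^*|_Z\|<\infty$. Now fix $\psi\in X^*$ and choose $\phi_j\in Z$ with $\phi_j\to\psi$ in $w^*$; such a sequence is necessarily norm bounded, say $C:=\sup_j\|\phi_j\|<\infty$. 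For fixed $n$, the operator $T_n^*$ is weak*-continuous with finite-dimensional range $T_n^*(X^*)$, on which weak* and norm agree, so $T_n^*(\psi-\phi_j)\to0$ in norm as $j\to\infty$. Letting $j\to\infty$ in $\|T_n^*\psi\|\le\|T_n^*\phi_j\|+\|T_n^*(\psi-\phi_j)\|\le MC+\|T_n^*(\psi-\phi_j)\|$ gives $\|T_n^*\psi\|\le MC$ for every $n$, whence $\sup_n\|T_n^*\psi\|<\infty$. A second application of the uniform boundedness principle, now on $X^*$, gives $\sup_n\|T_n\|=\sup_n\|T_n^*\|<\infty$. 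With this bound in hand, (iv)$\Rightarrow$(ii) follows as in the primal case: the $w^*$-sequential density of $Z$ ensures that the functionals in $Z$ separate points of the finite-dimensional space $T_m(X)$, so the weak* convergence on $Z$ forces $T_nz\to z$ in norm for $z\in T_m(X)$; closedness of the norm-convergence set together with $Y=X$ then gives norm convergence on all of $X$.

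Finally, for the reflexive case I would observe that the hypotheses are self-dual: the operators $T_n^*$ are again finite rank, and since $X^{**}=X$ and $(T_n^*)^*=T_n$, the system $(T_n^*)$ on $X^*$ satisfies the same assumptions \eqref{E:norm-norm}. Applying the already-proved equivalence (i)$\Leftrightarrow$(ii) to this dual system, and noting that the weak and weak* topologies on $X^*$ coincide when $X$ is reflexive, converts the weak* statement (iii) into the norm statement (vi). Thus (vi) joins the chain, completing the proof.
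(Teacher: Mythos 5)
Your proof is correct. The overall architecture --- reducing to the finite-dimensional ranges $T_m(X)$ and $T_m^*(X^*)$ via the invariance hypotheses \eqref{E:norm-norm}, upgrading weak (resp.\ weak*) convergence to norm convergence there, and then extending by uniform boundedness and density --- is the same as the paper's, which packages these steps into Lemmas~\ref{L:weak-weak*}, \ref{L:weak-norm} and \ref{L:weak*-norm}. The one place where you genuinely diverge is the crux of (iv)$\Rightarrow$(ii): deducing $\sup_n\|T_n^*\|<\infty$ from boundedness of the restrictions $T_n^*|_Z$ alone. The paper does this via Lemma~\ref{L:ball}, a Baire-category argument showing that the $w^*$-closure of the unit ball of $Z$ contains a positive multiple of the unit ball of $X^*$, whence $\|T^*\|\le\|T^*|_Z\|/r$ for \emph{every} bounded operator $T$. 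You instead exploit the finite rank of the $T_n$: since $T_n^*$ is $w^*$-continuous with finite-dimensional range, $T_n^*\phi_j\to T_n^*\psi$ in norm whenever $\phi_j\to\psi$ weak*, which converts the bound on $Z$ into the pointwise bound $\sup_n\|T_n^*\psi\|<\infty$ for each $\psi\in X^*$, and a second application of Banach--Steinhaus finishes. Your route is more elementary (it dispenses with Lemma~\ref{L:ball} entirely) but is tied to the finite-rank setting, whereas the paper's ball lemma is a general fact about $w^*$-sequentially dense subspaces; for this theorem either suffices. Two further minor differences: you note that $Z|_{T_m(X)}$ is \emph{all} of $T_m(X)^*$ (a point-separating subspace of a finite-dimensional dual is the whole dual), which lets you bypass the $\epsilon$-approximation in the last step of the paper's proof of Lemma~\ref{L:weak*-norm}; and you obtain $Y=X$ from (i) by observing that the norm-closed subspace $Y$ is weakly closed, rather than by the paper's explicit Hahn--Banach annihilator argument --- both cosmetic. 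The reflexive case (vi) is handled exactly as in the paper's Corollary~\ref{C:weak-norm}.
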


We shall prove this result via a series of lemmas, beginning with a very simple one.

\begin{lemma}\label{L:weak-weak*}
Let $(T_n)_{n\ge0}$ be a sequence of bounded operators on $X$.
The following statements are equivalent:
\begin{enumerate}[\normalfont(i)]
\item $T_n x\to x$ in $(X,w)$ for all $x\in X$;
\item $T_n^*\phi\to \phi$ in $(X^*,w^*)$ for all $\phi\in X^*$.
\end{enumerate}
\end{lemma}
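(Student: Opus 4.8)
The plan is to observe that, once the two topologies are unwound, both conditions reduce to exactly the same family of scalar statements, so that nothing is required beyond invoking the defining relation of the adjoint. Recall that weak convergence $x_k\to x$ in $(X,w)$ means $\langle x_k,\phi\rangle\to\langle x,\phi\rangle$ for every $\phi\in X^*$, while weak* convergence $\phi_k\to\phi$ in $(X^*,w^*)$ means $\langle x,\phi_k\rangle\to\langle x,\phi\rangle$ for every $x\in X$.

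First I would fix $x\in X$ and $\phi\in X^*$ and apply the defining relation of the adjoint, namely $\langle x,T_n^*\phi\rangle=\langle T_nx,\phi\rangle$ for every $n\ge0$. This single identity is the crux: it shows that the scalar sequence $\bigl(\langle T_nx,\phi\rangle\bigr)_n$ appearing in the description of (i) coincides term-by-term with the scalar sequence $\bigl(\langle x,T_n^*\phi\rangle\bigr)_n$ appearing in the description of (ii). Consequently, for this fixed pair, the first sequence converges to $\langle x,\phi\rangle$ if and only if the second does.

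Then I would translate each hypothesis into its quantified scalar form. Condition (i) asserts that $\langle T_nx,\phi\rangle\to\langle x,\phi\rangle$ for all $x\in X$ and all $\phi\in X^*$, and condition (ii) asserts that $\langle x,T_n^*\phi\rangle\to\langle x,\phi\rangle$ for all $\phi\in X^*$ and all $x\in X$. By the adjoint identity these two doubly-quantified statements are literally identical, and the equivalence of (i) and (ii) follows at once. Since the whole argument is a direct transcription of the definitions, there is no genuine obstacle; the only point worth noting is that both statements carry the same pair of universal quantifiers (over all $x$ and all $\phi$), which is precisely what licenses passing between them.
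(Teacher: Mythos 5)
Your argument is correct and is essentially identical to the paper's own proof: both unwind the two convergence statements into the doubly-quantified scalar statement $\langle T_nx,\phi\rangle\to\langle x,\phi\rangle$ and identify them via the adjoint relation $\langle x,T_n^*\phi\rangle=\langle T_nx,\phi\rangle$. Nothing is missing.
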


\begin{proof}
We have
\begin{align*}
T_n x \overset{w}\longrightarrow x \quad\forall x\in X
&\iff \langle T_nx,\phi\rangle\to\langle x,\phi\rangle \quad\forall \phi\in X^*,\forall x\in X\\
&\iff \langle x,T_n^*\phi\rangle\to\langle x,\phi\rangle \quad\forall x\in X, \forall \phi\in X^*\\
&\iff T_n^*\phi  \overset{w^*}\longrightarrow \phi \quad\forall \phi\in X^*.\qedhere
\end{align*}
\end{proof}

We next establish a similar result relating weak convergence in $X$ to norm convergence. 
In order to obtain an equivalence, we need to impose some conditions on the operators $T_n$. 

\begin{lemma}\label{L:weak-norm}
Let $(T_n)_{n\ge0}$ be a sequence of bounded, finite-rank operators on $X$ such that
\begin{equation}\label{E:weak-norm}
T_nT_m(X)\subset T_m(X) \qquad(m,n\ge0).
\end{equation}
Then the  following statements are equivalent:
\begin{enumerate}[\normalfont(i)]
\item $T_n x\to x$ in $(X,w)$ for all $x\in X$;
\item $T_nx \to x$ in $(X,\|\cdot\|)$ for all $x\in X$.
\end{enumerate}
\end{lemma}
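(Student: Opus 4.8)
The implication (ii) $\Rightarrow$ (i) is immediate, as norm convergence implies weak convergence; the content lies in (i) $\Rightarrow$ (ii). The plan is to use the finite rank of the $T_m$ together with the invariance hypothesis \eqref{E:weak-norm} to upgrade weak convergence to norm convergence first on each range $T_m(X)$, then on their span, and finally on all of $X$. As a preliminary, I would note that (i) makes $(T_n)$ uniformly bounded: each weakly convergent sequence $(T_nx)$ is bounded, so $\sup_n\|T_nx\|<\infty$ for every $x$, whence $M:=\sup_n\|T_n\|<\infty$ by the uniform boundedness principle.

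The key step is norm convergence on the ranges. Fix $m$ and put $V_m:=T_m(X)$, which is finite-dimensional since $T_m$ has finite rank. By \eqref{E:weak-norm}, each $T_n$ maps $V_m$ into itself. For $y\in V_m$, hypothesis (i) gives $T_ny\to y$ weakly in $X$; but the vectors $T_ny$ and their limit $y$ all lie in the single finite-dimensional subspace $V_m$, on which (extending coordinate functionals from $V_m$ to $X$ by Hahn--Banach) weak and norm convergence coincide. Hence $T_ny\to y$ in norm, and therefore $T_nx\to x$ in norm for every $x$ in the linear span $\spn(\cup_m T_m(X))$.

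To reach all of $X$, I would first observe that (i) forces this span to be norm-dense: since each $T_nx$ lies in it and $T_nx\to x$ weakly, every $x$ belongs to the weak closure of the span, which by Mazur's theorem agrees with its norm closure. A routine $\varepsilon/3$ estimate then completes the argument: given $x$ and $\varepsilon>0$, choose $y$ in the span with $\|x-y\|<\varepsilon$, so that
\[
\|T_nx-x\|\le\|T_n(x-y)\|+\|T_ny-y\|+\|y-x\|\le(M+1)\varepsilon+\|T_ny-y\|,
\]
and the middle term tends to $0$ by the previous step; letting $n\to\infty$ and then $\varepsilon\to0$ yields (ii).

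The main obstacle is the key step itself: recognizing that finite rank makes each range finite-dimensional while the invariance \eqref{E:weak-norm} traps the whole orbit inside it, so that weak convergence there is automatically norm convergence. The second delicate point is the use of Mazur's theorem to convert the weak-limit hypothesis into the density that the final span-approximation requires.
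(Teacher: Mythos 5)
Your proof is correct and follows essentially the same route as the paper's: uniform boundedness via Banach--Steinhaus, norm convergence on each finite-dimensional range $T_m(X)$ (using the invariance hypothesis \eqref{E:weak-norm} to keep the orbit inside $T_m(X)$ and Hahn--Banach to extend functionals), density of $\spn(\cup_m T_m(X))$, and a final $\varepsilon/3$ approximation. The only cosmetic difference is that you deduce the density from Mazur's theorem applied to the weak limits $T_nx\to x$, whereas the paper argues by Hahn--Banach with an annihilating functional $\phi$ and the weak* convergence $T_m^*\phi\to\phi$; the two are interchangeable.
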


\begin{remarks}
(1) Lemma~\ref{L:weak-norm} fails without the assumption `finite-rank'.
For example, if $X:=\ell^2(\ZZ^+)$ and $T_n:=I+S^n$, 
where $S$ is the unilateral shift on $\ell^2(\ZZ^+)$, then $T_nT_m=T_mT_n$ for all $m,n$, 
so \eqref{E:weak-norm} holds, and
\[
T_nx-x=S^n x\overset{w}\longrightarrow0 \quad\forall x\in \ell^2(\ZZ^+),
\]
but
\[
\|T_ne_0-e_0\|_2=\|e_{n}\|_2\not\to0.
\]

(2) Lemma~\ref{L:weak-norm} also fails without the assumption \eqref{E:weak-norm}. 
For example, if $X=\ell^2(\ZZ^+)$ and 
$T_n:=\sum_{j=0}^n(e_j\otimes e_j)+(e_n\otimes e_0)$, 
then each $T_n$ is a bounded, finite-rank operator, and
\[
T_n x-x=-\sum_{j>n}\langle x,e_j\rangle e_j+\langle x,e_0\rangle e_n
\overset{w}\longrightarrow0 \quad \forall x\in\ell^2(\ZZ^+),
\]
but
\[
\|T_ne_0-e_0\|_2=\|e_n\|_2\not\to0.
\]
Note that, in this example, if $m<n$, 
then $T_m(X)=\spn\{e_0,e_1,\dots,e_m\}$, 
while $T_nT_m(X)=\spn\{e_0+e_n,e_1,\dots,e_m\}$.
\end{remarks}

\begin{proof}[Proof of Lemma~\ref{L:weak-norm}]
It is enough to prove that (i) $\Rightarrow$ (ii), since the reverse implication is obvious. 
Suppose then that $T_n x\to x$ weakly for all $x\in X$. 
We need to show that it also converges in norm. 
This will be done in four steps.

The first step is to show that $\|T_n x-x\|\to0$ if 
$x\in \spn(\cup_mT_m(X))$.
Fix $m$ and let $\psi$ be a continuous linear functional on $T_m(X)$.
By the Hahn--Banach theorem, 
we can extend $\psi$ to a continuous linear functional $\phi$ on the whole of $X$. 
Therefore, for all $x\in T_m(X)$, we have
\[
\langle T_n x,\psi\rangle
=\langle T_n x,\phi\rangle
\to\langle x, \phi\rangle
=\langle x,\psi\rangle.
\]
This shows that $T_n x\to x$ in the weakly in $T_m(X)$. As $\dim(T_m(X))<\infty$,
the weak topology and norm topology coincide, so $\|T_nx-x\|\to0$ for all $x\in T_m(X)$.
Finally, by linearity, it follows that $\|T_nx-x\|\to0$ for all $x\in  \spn(\cup_mT_m(X))$,
as claimed.

The second step is to show that $\spn(\cup_mT_m(X))$ is norm-dense in $X$.
Suppose the contrary. Then, by the Hahn--Banach theorem, 
there exists $\phi\in X^*\setminus\{0\}$ such that
$\phi=0$ on $\spn(\cup_mT_m(X))$. 
For all $m\ge1$, we have
\[
\langle x,T_m^*\phi\rangle=\langle T_mx,\phi\rangle=0 \quad(x\in X),
\]
so $T_m^*\phi=0$. Also, from Lemma~\ref{L:weak-weak*},
we know that  $T_m^*\phi\to\phi$ in $(X^*,w^*)$.  Hence $\phi=0$. 
This contradicts  the choice of $\phi$. 
We conclude that, as claimed, $\spn(\cup_mT_m(X))$ is norm-dense in $X$.

The third step is to show that $\sup_n\|T_n\|<\infty$. For each $x\in X$, the sequence $(T_nx)$ converges weakly, so it is weakly bounded. By the Banach--Steinhaus theorem, it is also norm-bounded, i.e.,
$\sup_n\|T_n x\|<\infty$. As this holds for each $x\in X$, a second application of Banach--Steinhaus shows that $\sup_n\|T_n\|<\infty$, as claimed.

The fourth and final step is to show that $\|T_nx-x\|\to0$ for all $x\in X$.
Let $x\in X$ and let $\epsilon>0$. By steps~2 and 3,
there exists $x_0\in\spn(\cup_mT_m(X))$ such that $\|x-x_0\|<\epsilon/(1+\sup_n\|T_n\|)$.
By step~1, there exists $n_0$ such that $\|T_nx_0-x_0\|<\epsilon$ for all $n\ge n_0$.
Then, if $n\ge n_0$, we have
\[
\|T_nx -x\|\le \|T_n(x-x_0)\|+\|T_n x_0-x_0\|+\|x_0-x\|<3\epsilon.
\]
Thus $\|T_nx-x\|\to0$, as was to be proved.
\end{proof}

If $X$ is reflexive, then we may interchange the roles of $X$ and $X^*$ in
Lemma~\ref{L:weak-norm}, and deduce the following corollary.

\begin{corollary}\label{C:weak-norm}
Suppose that $X$ is reflexive.
Let $(T_n)_{n\ge0}$ be a sequence of bounded, finite-rank operators on $X$ such that
\begin{equation}\label{E:weak*-norm}
T_n^*T_m^*(X^*)\subset T_m^*(X^*)
\qquad(m,n\ge0).
\end{equation}
Then the  following statements are equivalent:
\begin{enumerate}[\normalfont(i)]
\item $T_n^*\phi\to \phi$ in $(X^*,w^*)$ for all $\phi\in X^*$;
\item $T_n^*\phi \to \phi$ in $(X^*,\|\cdot\|)$  for all $\phi\in X^*$.
\end{enumerate}
\end{corollary}

\begin{remark}
If $X$ is not reflexive, then Corollary~\ref{C:weak-norm} may break down.
For example, let $X:=\ell^1(\ZZ^+)$ and let $T_n:\ell^1(\ZZ^+)\to\ell^1(\ZZ^+)$ be the projection onto the first $n$ coordinates. Its adjoint $T_n^*:\ell^\infty(\ZZ^+)\to\ell^\infty(\ZZ^+)$ is also the projection onto the first $n$ coordinates.
The sequences $(T_n)$ and $(T_n^*)$ satisfy 
\eqref{E:weak-norm} and \eqref{E:weak*-norm} respectively.
Also $\|T_nx-x\|_1\to0$ for all $x\in\ell^1(\ZZ^+)$, 
so by Lemmas~\ref{L:weak-weak*} and \ref{L:weak-norm}
we have $T_n^*\phi\to\phi$ weak* for all $\phi\in\ell^\infty(\ZZ^+)$.
However, if $\phi_0:=(1,1,1,\dots)$, then
$\|T_n^*\phi_0-\phi_0\|_\infty\not\to0$.
Note that, in this example, the norm-closure of $\spn\cup_n T_n^*(X^*)$ is $c_0$.
\end{remark}

Here is a version of Corollary~\ref{C:weak-norm} valid for all $X$, not necessarily reflexive.

\begin{lemma}\label{L:weak*-norm}
Let  $(T_n)_{n\ge0}$ be a
sequence of bounded, finite-rank operators on $X$ such that
\begin{equation}\label{E:weak*-norm2}
T_n(T_m(X))\subset T_m(X) 
\quad\text{and}\quad
T_n^*(T_m^*(X^*))\subset T_m^*(X^*) \quad (m,n\ge0).
\end{equation}
Let  
\[
Y:=\overline{\spn(\cup_{m\ge0}T_m(X))}
\quad\text{and}\quad
Z:=\overline{\spn(\cup_{m\ge0}T_m^*(X^*))},
\]
where the closures are taken in the norm topologies of $X$ and $X^*$ respectively.
Then the following statements are equivalent:
\begin{enumerate}[\normalfont\rm(i)]
\item $T_n^*\phi\to \phi$ in $(X^*,w^*)$ for all $\phi\in X^*$;
\item $T_n^*\phi\to \phi$ in $(X^*,w^*)$ for all $\phi\in Z$, and $Z$ is $w^*$-sequentially dense in $X^*$ and $Y=X$;
\item $T_n^*\phi\to \phi$ in $(X^*,\|\cdot\|)$ for all $\phi\in Z$, and $Z$ is $w^*$-sequentially dense in $X^*$ and $Y=X$.
\end{enumerate}
\end{lemma}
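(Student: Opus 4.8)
The plan is to establish the cycle (i)$\Rightarrow$(ii)$\Rightarrow$(iii) together with the two reverse steps (iii)$\Rightarrow$(ii) and (ii)$\Rightarrow$(i), so that all three statements become equivalent. The step (iii)$\Rightarrow$(ii) is immediate, since norm convergence trivially implies weak*-convergence while the density conditions are identical. For (i)$\Rightarrow$(ii), the weak*-convergence on $Z$ is merely a restriction of that on $X^*$; the $w^*$-sequential density of $Z$ holds because each $\phi\in X^*$ is the weak*-limit of the sequence $(T_n^*\phi)$, which lies in $Z$; and $Y=X$ follows by the Hahn--Banach argument used in the second step of Lemma~\ref{L:weak-norm}: if $\phi\in X^*$ annihilates $Y$ then $T_m^*\phi=0$ for all $m$, whence $\phi=0$ because $T_m^*\phi\to\phi$ weak*.

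The implication (ii)$\Rightarrow$(iii) I would prove by transplanting the four-step scheme of Lemma~\ref{L:weak-norm} to the space $Z$, using the restricted operators $S_n:=T_n^*|_Z$, which are finite-rank and map $Z$ into itself because $T_n^*(X^*)\subset Z$; here the second inclusion in \eqref{E:weak*-norm2} plays the role of \eqref{E:weak-norm}. First, for $\phi\in T_m^*(X^*)$ the invariance gives $T_n^*\phi\in T_m^*(X^*)$, a finite-dimensional space on which the (Hausdorff) weak*-topology coincides with the norm topology, so the hypothesis $T_n^*\phi\to\phi$ weak* upgrades to $\|T_n^*\phi-\phi\|\to0$; by linearity this holds on $\spn(\cup_mT_m^*(X^*))$. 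Secondly, that span is norm-dense in $Z$ by the very definition of $Z$. Thirdly, each weak*-convergent sequence $(T_n^*\phi)$ is norm-bounded, so Banach--Steinhaus applied on the Banach space $Z$ yields $M:=\sup_n\|S_n\|<\infty$. A routine $\epsilon/3$ argument then delivers $\|T_n^*\phi-\phi\|\to0$ for every $\phi\in Z$.

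The crux is the closing implication (ii)$\Rightarrow$(i), and in particular the \emph{global} uniform bound $\sup_n\|T_n\|<\infty$; this cannot come from the predual, where I only expect norm convergence on a dense subspace, nor from Banach--Steinhaus on $Z$, which controls $T_n^*$ only on $Z$. The key device is weak*-lower-semicontinuity of the norm. Given $\phi\in X^*$, the $w^*$-sequential density furnishes $\psi_k\in Z$ with $\psi_k\to\phi$ weak*, and any such sequence is automatically norm-bounded, say by $B$. Since each $T_n^*$ is weak*-to-weak* continuous, $T_n^*\psi_k\to T_n^*\phi$ weak* as $k\to\infty$, and lower-semicontinuity gives $\|T_n^*\phi\|\le\liminf_k\|T_n^*\psi_k\|\le MB$, a bound \emph{independent of $n$}. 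Hence $\sup_n\|T_n^*\phi\|<\infty$ for every $\phi\in X^*$, and a final application of Banach--Steinhaus on $X^*$ gives $\sup_n\|T_n\|=\sup_n\|T_n^*\|<\infty$.

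With the uniform bound in hand I would finish as in Lemma~\ref{L:weak-norm}. The $w^*$-density of $Z$ forces $Z$ to separate the points of $X$, so for $x\in T_m(X)$ the vectors $T_nx-x$ lie in the finite-dimensional space $T_m(X)$ (by the first inclusion in \eqref{E:weak*-norm2}) and satisfy $\langle T_nx-x,\psi\rangle\to0$ for all $\psi\in Z$; separation then yields $\|T_nx-x\|\to0$, and by linearity this persists on the dense subspace $\spn(\cup_mT_m(X))$, which is dense because $Y=X$. Combining this with the uniform bound through an $\epsilon/3$ argument gives $\|T_nx-x\|\to0$ for every $x\in X$, whence $T_nx\to x$ weakly, and Lemma~\ref{L:weak-weak*} converts this into (i). The only delicate point is the global uniform bound described above; every remaining step is either definitional or a finite-dimensional reduction.
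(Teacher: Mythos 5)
Your proof is correct, and it reaches the conclusion by a genuinely different route at the one point where the paper has to work hardest. The steps (iii)$\Rightarrow$(ii), (i)$\Rightarrow$(ii) and (ii)$\Rightarrow$(iii) coincide in substance with the paper's argument (the paper organizes the cycle as (i)$\Rightarrow$(ii)$\Rightarrow$(iii)$\Rightarrow$(i), but the content of those steps is the same four-step scheme you describe). The divergence is in how the global bound $\sup_n\|T_n^*\|<\infty$ is extracted from the bound $M=\sup_n\|T_n^*|_Z\|$ on the subspace $Z$. The paper proves a separate lemma (Lemma~\ref{L:ball}) via Baire category: the $w^*$-closure of the unit ball of a $w^*$-sequentially dense subspace $Z$ contains a ball of radius $r>0$ in $X^*$, whence $\|T^*\|\le\|T^*|_Z\|/r$ for \emph{every} adjoint operator, with an explicit constant. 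You instead fix $\phi\in X^*$, take $\psi_k\in Z$ with $\psi_k\to\phi$ weak* (norm-bounded by Banach--Steinhaus), use the $w^*$-to-$w^*$ continuity of $T_n^*$ and the $w^*$-lower-semicontinuity of the dual norm to get $\|T_n^*\phi\|\le MB_\phi$ uniformly in $n$, and then apply Banach--Steinhaus once more on $X^*$. This is softer (no quantitative radius, no category argument) but entirely sufficient here. A second, smaller difference: in the finite-dimensional step you note that $Z_m:=\{\phi|_{T_m(X)}:\phi\in Z\}$ actually equals $T_m(X)^*$ (separation plus finite dimension), so that only the \emph{weak*} convergence of $T_n^*\phi$ on $Z$ is needed to force $\|T_nx-x\|\to0$ on $T_m(X)$; this lets you run the closing implication as (ii)$\Rightarrow$(i) rather than the paper's (iii)$\Rightarrow$(i), which relies on norm convergence on $Z$ together with the uniform bound $K$ in an $\epsilon$-estimate. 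Both arguments are sound; yours trades the reusable quantitative Lemma~\ref{L:ball} for a shorter self-contained passage.
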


For the proof, we need a further lemma.

\begin{lemma}\label{L:ball}
Let $Z$ be a subspace of $X^*$ that is $w^*$-sequentially dense in $X^*$.
Then the $w^*$-closure of the unit ball of $Z$ contains a positive multiple of the unit ball of~$X^*$.
\end{lemma}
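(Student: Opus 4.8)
The plan is to pass to a dual formulation and then extract a \emph{uniform} bound through a Baire category argument carried out in the norm topology of $X^*$. Write $B_Z$ and $B_{X^*}$ for the respective closed unit balls and set $K:=\overline{B_Z}^{\,w^*}$, so that $K$ is $w^*$-closed, convex and balanced. By the Hahn--Banach separation theorem for the dual pair $(X,X^*)$ (equivalently, the bipolar theorem), and using that $\psi\mapsto|\langle x,\psi\rangle|$ is $w^*$-continuous, the inclusion $cB_{X^*}\subseteq K$ is equivalent to the norm inequality
\[
c\|x\|\le p(x):=\sup_{z\in B_Z}|\langle x,z\rangle|\qquad(x\in X).
\]
Thus the whole lemma reduces to producing a single constant $c>0$ for which the seminorm $p$ on $X$ is bounded below by $c\|\cdot\|$.

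Next I would convert $w^*$-sequential density into a family of pointwise estimates. Fix $\phi\in X^*$ and choose $(z_n)\subseteq Z$ with $z_n\to\phi$ in $(X^*,w^*)$. Being $w^*$-convergent, this sequence is $w^*$-bounded, hence norm-bounded by Banach--Steinhaus, say $\|z_n\|\le M(\phi)$. Since $|\langle x,z_n\rangle|\le\|z_n\|\,p(x)$ for every $x$, passing to the limit gives $|\langle x,\phi\rangle|\le M(\phi)\,p(x)$ for all $x\in X$. So every $\phi$ now satisfies such an inequality, but with a constant $M(\phi)$ that a priori depends on $\phi$.

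The heart of the matter, and the main obstacle, is to make this constant uniform over the unit ball of $X^*$; the difficulty is that one cannot apply Baire's theorem directly in the $w^*$-topology, which is not Baire in general. The key observation that circumvents this is that the sets
\[
N_k:=\{\phi\in X^*:|\langle x,\phi\rangle|\le k\,p(x)\ \text{ for all } x\in X\}\qquad(k\ge1)
\]
are \emph{norm}-closed, being intersections over $x$ of the norm-closed sets $\{\phi:|\langle x,\phi\rangle|\le kp(x)\}$, as well as convex and balanced. By the previous step $X^*=\bigcup_{k\ge1}N_k$, and $X^*$ is complete in norm, so Baire's theorem yields some $N_{k_0}$ with nonempty norm-interior. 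A closed convex balanced set containing $0$ with nonempty interior is a neighbourhood of the origin, so $rB_{X^*}\subseteq N_{k_0}$ for some $r>0$.

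Finally I would read off the desired inequality: for every $\phi$ with $\|\phi\|\le r$ we have $|\langle x,\phi\rangle|\le k_0\,p(x)$, and taking the supremum over such $\phi$ turns the left-hand side into $r\|x\|$, since $\|x\|=\sup_{\|\phi\|\le1}|\langle x,\phi\rangle|$. Hence $p(x)\ge (r/k_0)\|x\|$ for all $x$, which by the first paragraph gives $(r/k_0)B_{X^*}\subseteq K$, completing the proof.
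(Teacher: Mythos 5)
Your proof is correct and is essentially the paper's argument in dual packaging: both use $w^*$-sequential density plus Banach--Steinhaus to cover $X^*$ by the sets $kC$ (where $C$ is the $w^*$-closure of the unit ball of $Z$), note these are norm-closed, convex and symmetric, and apply Baire's theorem in the norm topology of $X^*$ to conclude that $C$ contains a ball about the origin. Your sets $N_k$ coincide with $kC$ by the bipolar theorem, so the polar/seminorm reformulation via $p(x)=\sup_{z\in B_Z}|\langle x,z\rangle|$ is only a cosmetic difference.
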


\begin{proof}
Let $C$ be the closure in $(X^*,w^*)$ of the unit ball of $Z$.
Given $\phi\in X^*$, there exists a sequence $(\phi_n)$ in $Z$ such that $\phi_n$ is $w^*$-convergent to $\phi$.
By the Banach--Steinhaus theorem, 
since  $(\phi_n)$ is $w^*$-bounded, it is norm-bounded. Hence there exists an integer $m\ge1$
such that $\phi\in mC$. 
Thus we have $\cup_{m\ge1}mC=X^*$. 
Each set $mC$ is $w^*$-closed in $X^*$, 
so it is certainly norm-closed. 
We may therefore apply Baire's theorem to deduce that 
there exists $m_0$ such that $m_0C$ has non-empty norm-interior. 
As $m_0C$ is a convex, symmetric set, 
it follows that $0$ belongs to the norm interior of $m_0C$. 
In other words, $C$ contains a ball around $0$.
\end{proof}

\begin{proof}[Proof of Lemma~\ref{L:weak*-norm}]

[(i)$\Rightarrow$(ii)]: Suppose that (i) holds.
Then it is obvious that $T_n^*\phi\to\phi$ for all $\phi\in Z$,
and also that $Z$ is $w^*$-sequentially dense in $X^*$.
Finally, by Lemmas~\ref{L:weak-weak*} and \ref{L:weak-norm},
(i) implies that $T_nx\to x$ in norm for all $x\in X$,
and this entails that $Y=X$.

\smallskip

[(ii)$\Rightarrow$(iii)]: Suppose that (ii) holds.
By (ii), we have $T_n^*\phi\to\phi$ in $(X^*,w^*)$ for each $\phi\in Z$.
By the Banach--Steinhaus theorem, applied to the sequence $(T_n^*|_Z)$ on $Z$,
we have $\sup_n\|T_n^*|_Z\|<\infty$. 

Let $m\ge0$. Then $T_n^*\phi\to\phi$ in $(X^*,w^*)$ for all
$\phi\in T_m^*(X^*)$. Since $\dim(T_m^*(X^*))<\infty$,
it follows that $T_n^*\phi\to\phi$ in norm for all
$\phi\in T_m^*(X^*)$. As this holds for each $m\ge0$,
we deduce that $T_n^*\phi\to\phi$ in norm for all
$\phi\in \spn(\cup_{m\ge0}T_m^*(X^*))$.
Lastly, as $\sup_n\|T_n^*|_Z\|<\infty$,
it follows that $T_n^*\phi\to\phi$ in norm for all
$\phi\in Z$.

\smallskip

[(iii)$\Rightarrow$(i)]: Suppose that (iii) holds.
As noted above, $\sup_n\|T_n^*|_Z\|<\infty$. 
By (iii) and Lemma~\ref{L:ball},
the unit ball of $Z$ is $w^*$-dense in a ball of radius $r>0$ in $X^*$.
It follows that, for every operator $T$ on $X$, we have $\|T^*\|\le \|T^*|_Z\|/r$. Combining these facts, 
we deduce that $K:=\sup_n\|T_n^*\|<\infty$.

Let $m\ge0$ and let $Z_m:=\{\phi|_{T_m(X)}:\phi\in Z\}$. 
Since $Z$ is $w^*$-sequentially dense in $X^*$
and $\dim T_m(X)<\infty$,  it follows that
$Z_m$ is norm-dense in $T_m(X)^*$. 
Let $x\in T_m(X)$. Let $\psi\in T_m(X)^*$ and let $\epsilon>0$. Then there exists $\phi\in Z$ with $\|\psi-\phi|_{T_m(X)}\|<\epsilon$. Since $\|T_n^*\phi-\phi\|\to0$, there exists $N$ such that
\[
n\ge N 
\quad\Rightarrow \quad
\|T_n^*\phi-\phi\|<\epsilon.
\]
Then
\[
\langle T_n x-x,\,\psi\rangle=\langle x,T_n^*\phi-\phi\rangle +\langle T_nx-x,\, \psi-\phi|_{T_m(X)}\rangle,
\]
so, for all $n\ge N$,
\[
|\langle T_n x-x,\,\psi\rangle|\le \|x\|\|T_n^*\phi-\phi\| +(K+1)\|x\|\| \psi-\phi|_{T_m(X)}\|\le  (\|x\|+K+1)\epsilon.
\]
Thus $T_nx\to x$ weakly for all $x\in T_m(X)$. As $\dim T_m(X)<\infty$, it follows that
$T_n x\to x$ in norm  for all $x\in T_m(X)$. 
As this holds for all $m\ge0$,
we deduce that $T_nx\to x$ in norm for all $x\in \spn(\cup_m T_m(X))$.
Since $\sup_n\|T_n\|=\sup_n\|T_n^*\|=K<\infty$, it follows that
$T_nx\to x$ for all $x\in Y$. 

By (iii), we have $Y=X$. Thus $T_nx\to x$ in norm for all $x\in X$.
By Lemmas~\ref{L:weak-weak*} and \ref{L:weak-norm},
this implies $T_n^*\phi\to\phi$ in $(X^*,w^*)$ for all $\phi\in X^*$.
\end{proof}

\begin{remark}
Although statements (i)--(iii) are all about the adjoint operators $T_n^*$,
we nonetheless need the invariance assumption that
$T_n(T_m(X))\subset T_m(X)$ in \eqref{E:weak*-norm2}.
This assumption is used in the proof of the implication 
(iii)$\Rightarrow$(i),
and the result is actually false without this assumption.
Here is a counterexample. 

Let $X:=\ell^1(\NN)$. For $n\ge1$ define $T_n:\ell^1(\NN)\to\ell^1(\NN)$ by
\[
T_n:=P_{2n}(S^n+I),
\]
where $S:\ell^1(\NN)\to\ell^1(\NN)$ is the unilateral shift
and $P_{2n}:\ell^1(\NN)\to\ell^1(\NN)$ is the projection onto the first $2n$ coordinates.
Taking adjoints, we have $T_n^*:\ell^\infty(\NN)\to\ell^\infty(\NN)$ given by
$T_n^*=(S^{*n}+I)P_{2n}$.
The following properties are easily verified:
\begin{itemize}
\item $T_m(X)=\spn\{e_1,\dots,e_{2m}\}$ for all $m\ge1$.
\item 
 $T_m^*(X^*)=\spn\{e_1,\dots,e_{2m}\}$ for all $m\ge1$.
\item $T_n^*(T_m^*(X^*))\subset T_m^*(X^*)$ for all $m,n\ge 1$.
\item $Y=\ell^1(\NN)=X$.
\item $Z=c_0(\NN)$, which is $w^*$-sequentially dense in $X^*=\ell^\infty(\NN)$.
\end{itemize}

If $\phi\in c_0(\NN)$, then we have
\[
\|T_n^*\phi-\phi\|_\infty
\le  \|S^{*n}\phi\|_\infty+\|P_{2n}\phi-\phi\|_\infty\to0 \quad(n\to\infty).
\]
Thus $T_n^*\phi\to\phi$ in norm for all $\phi\in Z$. Therefore (iii) holds.

However, if $\phi\in \ell^\infty(\NN)\setminus c_0(\NN)$,
then
\[
\langle e_1,\,T_n^*\phi-\phi\rangle=\langle T_ne_1-e_1,\,\phi\rangle
=\langle e_{n+1},\phi\rangle\not\to0 \quad(n\to\infty),
\]
and so $T_n^*\phi\not\to\phi$ weak*. Therefore (i) fails.
\end{remark}

Finally, the main result of the section,
Theorem~\ref{T:norm-norm}, follows by combining 
Lemmas~\ref{L:weak-weak*}, \ref{L:weak-norm} and \ref{L:weak*-norm}.


\section{Summability}\label{S:summability}

\subsection{The basic set-up}\label{S:setuo}

In the rest of the paper, 
we consider the following set-up. As before, $X$ denotes a Banach space with dual space $X^*$.
Let $(e_k)_{k\ge0}$ and $(\psi_k)_{k\ge0}$ be sequences in  $X$ and $X^*$ respectively such that
\[
\left\{
\begin{aligned}
\langle e_j,\psi_k\rangle&=0, \quad\forall j,k,~j\ne k,\\
\langle e_k,\psi_k\rangle&\ne0,\quad\forall k.
\end{aligned}
\right.
\]
For each $k\ge0$, define $P_k:X\to X$ by
\[
P_k:=\frac{e_k\otimes\psi_k}{\langle e_k,\psi_k\rangle}.
\]
Explicitly,
\[
P_k(x):=\frac{\langle x,\psi_k\rangle}{\langle e_k,\psi_k\rangle}e_k
\quad(x\in X).
\]
Clearly $P_k(e_k)=e_k$ and $P_k(e_j)=0$ if $j\ne k$. 
It is easy to see that $P_k$ is a rank-one projection ($P_k^2=P_k$) with
\[
\|P_k\|=\frac{\|e_k\|\|\psi_k\|}{|\langle e_k,\psi_k\rangle|}.
\]
Its adjoint $P_k^*:X^*\to X^*$ is given by
\[
P_k^*=\frac{\psi_k\otimes e_k}{\langle e_k,\psi_k\rangle}.
\]
Let $A=(a_{nk})_{n,k\ge0}$ be an infinite matrix of complex scalars such that, 
for each $n\ge0$,
\begin{equation}\label{E:convcond}
\sum_{k\ge 0} |a_{nk}|\|P_k\|<\infty.
\end{equation}
We define the associated summation operators $S_n^A:X\to X~(n\ge0)$
by the absolutely convergent series 
\[
S_n^A:=\sum_{k\ge0} a_{nk}P_k.
\]
Explicitly, we have
\begin{align*}
S_n^A(x)&=\sum_{k\ge 0} a_{nk}\frac{\langle x,\psi_k\rangle}{\langle e_k,\psi_k\rangle}e_k\quad(x\in X),\\
(S_n^A)^*(\phi)&=\sum_{k\ge 0} a_{nk}\frac{\langle e_k,\phi\rangle}{\langle e_k,\psi_k\rangle}\psi_k
\quad(\phi\in X^*).
\end{align*}


\subsection{The main equivalence}\label{S:equivalence}

With this notation established, our goal is to prove the following theorem.

\begin{theorem}\label{T:equivalence}
Let $Z$ be the norm-closure in $X^*$ of $\spn\{\psi_k:k\ge0\}$.
Suppose that there exists at least one matrix $A^0$ satisfying \eqref{E:convcond}
such that $S_n^{A^0}x\to x$ (either weakly or in norm) for all $x\in X$. 
Then, for every matrix $A$ satisfying  \eqref{E:convcond},
the following statements are equivalent:
\begin{enumerate}[\normalfont\rm(i)]
\item $S_n^A(x)\to x$ in $(X,w)$ for all $x\in X$;
\item $S_n^A(x)\to x$ in $(X,\|\cdot\|)$ for all $x\in X$;
\item $(S_n^A)^*(\phi)\to\phi$ in $(X^*,w^*)$ for all $\phi\in X^*$;
\item $(S_n^A)^*(\phi)\to\phi$ in $(X^*,w^*)$ for all $\phi\in Z$;
\item $(S_n^A)^*(\phi)\to\phi$ in $(X^*,\|\cdot\|)$ for all $\phi\in Z$.
\end{enumerate}
If, further $X$, is reflexive, then these are equivalent to:
\begin{enumerate}[\normalfont\rm(vi)]
\item $(S_n^A)^*(\phi)\to\phi$ in $(X^*,\|\cdot\|)$ for all $\phi\in X^*$.
\end{enumerate}
\end{theorem}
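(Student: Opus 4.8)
The plan is to reduce all five statements to two scalar conditions, exploiting the special structure rather than quoting Theorem~\ref{T:norm-norm} directly. Indeed, the latter is \emph{not} available here, because each $S_n^A=\sum_k a_{nk}P_k$ is merely compact, not finite-rank, so the ranges $S_m^A(X)$ need not be finite-dimensional. The feature that replaces finite-dimensionality is the mutual orthogonality $P_jP_k=\delta_{jk}P_k$, which I would record first and which yields the eigen-relations $S_n^Ae_k=a_{nk}e_k$ and $(S_n^A)^*\psi_k=a_{nk}\psi_k$, together with $\|(S_n^A)^*\|=\|S_n^A\|$. These collapse the behaviour of $S_n^A$ on the distinguished vectors to a single regularity condition, which I abbreviate (R): $a_{nk}\to1$ as $n\to\infty$ for each fixed $k$. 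Writing (B) for the uniform bound $\sup_n\|S_n^A\|<\infty$, the goal is to show that each of (i)--(v) is equivalent to the conjunction of (R) and (B).

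First I would mine the hypothesis on $A^0$ for two density facts that (iv) and (v) do not themselves supply. Since $S_n^{A^0}x$ lies in the closed span of $\{e_k\}$ and converges (at least weakly) to $x$, and since the weak and norm closures of a subspace coincide, we get $\overline{\spn\{e_k\}}=X$. Dually, Lemma~\ref{L:weak-weak*} applied to $(S_n^{A^0})$ gives $(S_n^{A^0})^*\phi\to\phi$ in $(X^*,w^*)$ for every $\phi$, and since $(S_n^{A^0})^*\phi\in Z$ for all $n$, this shows that $Z$ is $w^*$-sequentially dense in $X^*$. These two facts are the structural inputs needed below, and this is the only place where $A^0$ enters; note that its weak convergence suffices.

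The equivalence (i)$\iff$(iii) is immediate from Lemma~\ref{L:weak-weak*}, with no finite-rank requirement. For (ii): Banach--Steinhaus gives (B), and testing on $e_k$ gives (R); conversely, (R) forces $S_n^Ax\to x$ in norm on $\spn\{e_k\}$, which is dense by the first fact, so (B) and a $3\epsilon$-argument yield (ii). Since (i) likewise forces (R) (test on $e_k$) and (B) (Banach--Steinhaus), we obtain (i)$\iff$(ii). For (iv): testing on $\psi_k\in Z$ gives (R), while Banach--Steinhaus on the closed subspace $Z$ together with Lemma~\ref{L:ball} (legitimate precisely because $Z$ is $w^*$-sequentially dense) upgrades $\sup_n\|(S_n^A)^*|_Z\|<\infty$ to $\sup_n\|(S_n^A)^*\|=\sup_n\|S_n^A\|<\infty$, i.e.\ (B); hence (iv)$\Rightarrow$(R) and (B)$\Rightarrow$(ii), while (iii)$\Rightarrow$(iv) is trivial, closing the loop. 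Finally (v)$\Rightarrow$(iv) trivially, and (R) and (B) give norm convergence of $(S_n^A)^*$ on the dense subspace $\spn\{\psi_k\}$ of $Z$, hence on all of $Z$ by a further $3\epsilon$-argument, which is (v).

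For the reflexive case I would show $Z=X^*$, after which (v) reads exactly as (vi). Indeed, if $x\in X=X^{**}$ annihilates every $\psi_k$, then $S_n^{A^0}x=0$ for all $n$, so $x=0$; thus $\{\psi_k\}$ is total and $Z=X^*$. The main obstacle, as flagged above, is the absence of finite-rankness: it rules out a direct appeal to Theorem~\ref{T:norm-norm} and forces one to recover ``weak convergence implies norm convergence on the building blocks'' by hand, through the eigen-relations, in place of the finite-dimensionality used in its lemmas. The second delicate point is the transfer of uniform boundedness from $Z$ to all of $X^*$ in the implication out of (iv); this is exactly where the $w^*$-sequential density of $Z$ (supplied by $A^0$) and Lemma~\ref{L:ball} are indispensable.
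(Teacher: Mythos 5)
Your proof is correct, but it follows a genuinely different route from the paper's, and one of your premises about the paper's machinery is off. You dismiss Theorem~\ref{T:norm-norm} as inapplicable because $S_n^A$ is only compact; the paper in fact does apply it, by first using the absolute-convergence hypothesis \eqref{E:convcond} to choose finite-rank truncations $T_n^A:=\sum_{k\le K_n}a_{nk}P_k$ with $\|S_n^A-T_n^A\|<2^{-n}$, so that each of (i)--(vi) holds for $S_n^A$ iff it holds for $T_n^A$; the invariance conditions \eqref{E:norm-norm} then follow from the explicit descriptions of $T_n^A(X)$ and $(T_n^A)^*(X^*)$ as spans of the $e_k$ and $\psi_k$, and the $A^0$ hypothesis is fed through Theorem~\ref{T:norm-norm} once to obtain $Y=X$ and the $w^*$-sequential density of $Z$, and a second time for general $A$. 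Your argument instead bypasses the abstract theorem by exploiting the biorthogonality $P_jP_k=\delta_{jk}P_k$ to get the eigen-relations $S_n^Ae_k=a_{nk}e_k$ and $(S_n^A)^*\psi_k=a_{nk}\psi_k$, reducing everything to the scalar condition (R) plus the uniform bound (B); the two structural inputs you extract from $A^0$ (density of $\spn\{e_k\}$ in $X$, and $w^*$-sequential density of $Z$, the latter via Lemma~\ref{L:weak-weak*}) are exactly what the paper gets from its first application of Theorem~\ref{T:norm-norm}, and your use of Lemma~\ref{L:ball} to pass from $\sup_n\|(S_n^A)^*|_Z\|<\infty$ to $\sup_n\|(S_n^A)^*\|<\infty$ reproduces the key step inside the proof of Lemma~\ref{L:weak*-norm}. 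What your route buys is a self-contained, more elementary proof of this particular theorem in which the role of $A^0$ is completely transparent, together with the clean observation that in the reflexive case $Z=X^*$ because $\{\psi_k\}$ is total; what it loses is the modularity of the paper's two-step design, where the operator-theoretic Theorem~\ref{T:norm-norm} carries all the weight and is reusable (e.g.\ for the Hilbert-space analogue in \S\ref{S:Hilbert}). All the individual steps you sketch check out, including the deduction of (R) from (i) and from (iv) by testing on $e_k$ and $\psi_k$ respectively (legitimate since $\langle e_k,\psi_k\rangle\ne0$).
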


\begin{proof}
By \eqref{E:convcond}, for each $n\ge0$, there exists $K_n\ge0$ such that
\[
\sum_{k>K_n}|a_{nk}|\|P_k\|<2^{-n}.
\]
Define $T_n^A:X\to X$ by
\[
T_n^A:=\sum_{k=0}^{K_n} a_{nk}P_k.
\]
Clearly we have $\|S_n^A-T_n^A\|< 2^{-n}$ for all $n$,
so each of the statements (i)--(vi) holds iff it holds with $S_n^A$ replaced by $T_n^A$.

The operators $T_n^A$ are bounded, finite-rank operators on $X$. 
The images of $T_n^A$ and $(T_n^A)^*$ are given by
\begin{align}
T_n^A(X)&=\spn\{e_k: 1\le k\le K_n,~ a_{nk}\ne0\},\label{E:T_n^A}\\
(T_n^A)^*(X^*)&=\spn\{\psi_k: 1\le k\le K_n,~ a_{nk}\ne0\}.\label{E:T_n^A*}
\end{align}
It follows that $T_n^AT_m^A(X)\subset T_m^A(X)$ and $(T_n^A)^*(T_m^A)^*(X^*)\subset (T_m^A)^*(X^*)$ for all $m,n$.
Thus Theorem~\ref{T:norm-norm} applies. 

We claim that, for each $k$, there is at least one $n$ such that $a_{nk}\ne0$.
Indeed, if $a_{nk}=0$ for all~$n$, 
then $T_n^A(e_k)=0$ and $(T_n^A)^*(\psi_k)=0$ for all $n$, and so
(under the relevant assumption (i)--(vi)), 
either $e_k=0$ or $\psi_k=0$.
Neither of these  can be true, since we are assuming that $\langle e_k,\psi_k\rangle\ne0$.

It follows that, in the notation of Theorem~\ref{T:norm-norm},
\begin{align*}
Y&:=\overline{\spn(\cup_{m\ge0}(T_m^A)(X))}=\overline{\spn\{e_k:k\ge0\}},\\
Z&:=\overline{\spn(\cup_{m\ge0}(T_m^A)^*(X^*))}=\overline{\spn\{\psi_k:k\ge0\}}.
\end{align*}
In particular, this reconciles the definition of $Z$ given in the statement of Theorem~\ref{T:equivalence}
with that in Theorem~\ref{T:norm-norm}.

From Theorem~\ref{T:norm-norm}, applied with $A=A^0$,
we deduce that $Y=X$ and that $Z$ is $w^*$-sequentially dense in $X^*$.
Reapplying Theorem~\ref{T:norm-norm} with a general $A$ now gives the result.
\end{proof}

\begin{remarks}
(1) The implications (i)$\iff$(ii)$\iff$(iii)$\Longrightarrow$(iv)$\iff$(v) hold without the assumption about $A^0$.
The existence of $A^0$ is needed only for the implication (iv)$\Longrightarrow$(iii).

(2) In the concrete examples that we shall treat below,
the existence of $A^0$ will be an obvious consequence of some version of Fej\'er's theorem.

(3)
However, there do exist examples 	
where there exists no matrix $A^0$ satisfying~(ii). For instance, this is the case whenever
the Banach space $X$ fails to have the bounded approximation property (see \cite{MR19}). 
The article \cite{MPR22b} contains another such example, in which $X$ is actually a Hilbert space.
\end{remarks}


\subsection{Limitation theorems}\label{S:limitation}

Let $A=(a_{nk})_{n,k\ge0}$ be an infinite scalar matrix satisfying \eqref{E:convcond}.
We say that $A$ admits a left inverse if there exists a scalar matrix $B=(b_{jn})_{j,n\ge0}$  such that
\begin{equation}\label{E:inverse}
\sum_{n\ge0}|b_{jn}|<\infty \quad (j\ge0)
\qquad\text{and}\qquad
\sum_{n\ge0}b_{jn}a_{nk}=
\begin{cases}
1, &j=k,\\
0, &j\ne k.
\end{cases}
\end{equation}

\begin{theorem}\label{T:limitation}
Suppose that $S_n^A(x)\to x$ (weakly or in norm) for all $x\in X$, and that $A$ admits a left inverse $B$.
Then, writing $B_j:=\sum_{n\ge0}|b_{jn}|$, we have
\[
\frac{\|e_j\|\|\psi_j\|}{|\langle e_j,\psi_j\rangle|}=O(B_j) \quad(j\to\infty).
\]
\end{theorem}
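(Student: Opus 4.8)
The plan is to represent each projection $P_j$ as an absolutely convergent combination of the summation operators $S_n^A$, the combination being prescribed by the left inverse $B$, and then to read off the bound on $\|P_j\|$ from a uniform bound on the $\|S_n^A\|$. Since $\|P_j\|=\|e_j\|\|\psi_j\|/|\langle e_j,\psi_j\rangle|$, the claim is exactly that $\|P_j\|=O(B_j)$. First I would record that $M:=\sup_n\|S_n^A\|<\infty$. This is immediate from the convergence hypothesis, by the same argument as in Step~3 of the proof of Lemma~\ref{L:weak-norm}: for each $x$ the sequence $(S_n^A x)$ converges (weakly or in norm), hence is bounded, and two applications of the Banach--Steinhaus theorem yield $\sup_n\|S_n^A\|<\infty$.

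Next I would introduce the candidate operator $Q_j:=\sum_{n\ge0}b_{jn}S_n^A$. Because $\sum_n|b_{jn}|\,\|S_n^A x\|\le MB_j\|x\|$ for every $x$, this series converges in the strong operator topology to a bounded operator $Q_j$ with $\|Q_j\|\le MB_j$. The crux is to show $Q_j=P_j$. Here I would resist the temptation to interchange the order of summation in $\sum_n b_{jn}\sum_k a_{nk}P_k$: there is no uniform control on $\sum_k|a_{nk}|\,\|P_k\|$ as $n$ varies, so the double series need not converge absolutely. Instead I would test against the vectors $e_k$, on which the operators act diagonally. From $P_m e_k=\delta_{mk}e_k$ one gets $S_n^A e_k=a_{nk}e_k$, and in particular $|a_{nk}|\le M$ since $\|S_n^A e_k\|=|a_{nk}|\,\|e_k\|$. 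Hence the series $\sum_n b_{jn}a_{nk}$ converges absolutely, and the defining relation of the left inverse gives $Q_j e_k=\big(\sum_n b_{jn}a_{nk}\big)e_k=\delta_{jk}e_k=P_j e_k$. Thus $Q_j$ and $P_j$ agree on $\spn\{e_k:k\ge0\}$.

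Finally I would promote this to equality on all of $X$ by density. The convergence hypothesis forces $\overline{\spn\{e_k:k\ge0\}}=X$: each $S_n^A x$ lies in $\overline{\spn\{e_k\}}$ and $S_n^A x\to x$, which suffices even in the weak case since the weak and norm closures of a subspace coincide. (Equivalently, this is the conclusion $Y=X$ of Theorem~\ref{T:equivalence} applied with $A^0=A$.) Since $Q_j$ and $P_j$ are bounded operators agreeing on a dense subspace, they are equal, so $\|P_j\|=\|Q_j\|\le MB_j$, which is the asserted $O(B_j)$ bound.

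I expect the only real obstacle to be the identification $Q_j=P_j$. The natural formal manipulation — swapping the two sums — is not licensed here, and the right move is to evaluate on the $e_k$, where $S_n^A$ is diagonal and the left-inverse identity applies cleanly, before invoking density. The uniform boundedness and the final estimate are then routine.
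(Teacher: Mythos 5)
Your proof is correct, and its overall skeleton is the same as the paper's: both represent $P_j$ as $\sum_{n\ge0}b_{jn}S_n^A$, obtain $M:=\sup_n\|S_n^A\|<\infty$ from Banach--Steinhaus, and conclude $\|P_j\|\le MB_j$. Where you genuinely differ is in how the identity $\sum_n b_{jn}S_n^A=P_j$ is established. The paper interchanges the two sums in $\sum_n b_{jn}\sum_k a_{nk}P_k$, citing ``absolute convergence in \eqref{E:convcond} and \eqref{E:inverse}''; you are right to be wary of this, since those conditions give $\sum_k|a_{nk}|\,\|P_k\|<\infty$ for each fixed $n$ and $\sum_n|b_{jn}|<\infty$, whereas the quantity needed for Fubini is $\sum_n|b_{jn}|\bigl(\sum_k|a_{nk}|\,\|P_k\|\bigr)$, which need not be finite without some control on the growth in $n$ of the inner sums. (The interchange is harmless whenever each row of $B$ has only finitely many nonzero entries --- as for the lower-triangular inverses produced by Theorem~\ref{T:inversebound}, hence in all of the paper's applications --- but the theorem as stated allows general $B$.) Your substitute --- evaluate $Q_j:=\sum_n b_{jn}S_n^A$ on the vectors $e_k$, where $S_n^Ae_k=a_{nk}e_k$ and the bound $|a_{nk}|\le M$ makes $\sum_n b_{jn}a_{nk}$ absolutely convergent, then extend to all of $X$ by density of $\spn\{e_k:k\ge0\}$, which follows from the convergence hypothesis (via Mazur's theorem in the weak case) --- is rigorous and covers the general situation. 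So your argument is not merely a valid alternative; it repairs the one step of the paper's proof whose stated justification is too quick.
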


\begin{remark}
This theorem is of interest when $\|e_j\|\|\psi_j\|/|\langle e_j,\psi_j\rangle|$ grows with $j$,
since it then places limitations on possible matrices $A$ for which summability holds.
Hardy  \cite[p.57]{Ha91} calls such results limitation theorems.
\end{remark}

\begin{proof}
For each $j\ge0$, we have
\begin{align*}
\sum_{n\ge0}b_{jn}S_n^A
&=\sum_{n\ge0}b_{jn}\Bigl(\sum_{k\ge0}a_{nk}P_k\Bigr)\\
&=\sum_{k\ge0}\Bigl(\sum_{n\ge0}b_{jn}a_{nk}\Bigr)P_k
=\sum_{k\ge0}\delta_{jk}P_k=P_j,
\end{align*}
the exchange of sums being justified by absolute convergence in \eqref{E:convcond} and \eqref{E:inverse}.
Now as $S_n^A(x)\to x$ for all $x\in X$, we have $\sup_n\|S_n^A(x)\|<\infty$ for each $x\in X$,
and hence, by the Banach--Steinhaus theorem, $\sup_n\|S_n^A\|<\infty$. It follows that
\[
\|P_j\|=\Bigl\|\sum_{n\ge0}b_{jn}S_n^A\Bigr\|\le \sum_{n\ge0}|b_{jn}|\|S_n^A\|\le B_j\sup_n\|S_n^A\|,
\]
in other words, $\|P_j\|=O(B_j)$. Finally, since $\|P_j\|=\|e_j\|\|\psi_j\|/|\langle e_j,\psi_j\rangle|$,
the result follows.
\end{proof}

In practice,  the matrix $B$ is not known explicitly, so estimating $B_j$ may be problematic.
Here is one case where we can do it.

We denote by $W^+(\DD)$ the holomorphic Wiener algebra, namely 
\[
W^+(\DD):=\Bigl\{f(z)=\sum_{k\ge0}\hat{f}(k)z^k: \sum_{k\ge0}|\hat{f}(k)|<\infty\Bigr\}.
\]

\begin{theorem}\label{T:inversebound}
Let $f\in W^+(\DD)$ with $f(0)\ne0$, and let $(\gamma_n)_{n\ge0}$ be  increasing sequence in $(0,\infty)$.
Let $A=(a_{nk})$ be the lower-triangular matrix with entries given by
\[
a_{nk}:=\gamma_n^{-1}\hat{(1/f)}(n-k) \quad(0\le k\le n).
\]
Then $A$ has a left inverse $B$, where $B=(b_{jn})$ is a lower-triangular matrix and
\[
\sum_{n\ge0}|b_{jn}|\asymp\gamma_j \quad(j\to\infty).
\]
\end{theorem}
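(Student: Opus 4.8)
The plan is to construct the left inverse $B$ explicitly from the Taylor coefficients of $f$ itself, mirroring the construction of $A$ from $1/f$. Since $A$ is lower-triangular with $a_{nk}=\gamma_n^{-1}\widehat{(1/f)}(n-k)$, I would guess that $B$ should be lower-triangular with entries $b_{jn}=\gamma_n\,\widehat{f}(j-n)$ for $0\le n\le j$. The motivation is that $A$ encodes (up to the scaling $\gamma_n$) the convolution/Toeplitz action of the sequence $(\widehat{(1/f)}(k))_k$, and $B$ should encode convolution by $(\widehat{f}(k))_k$; since $f\cdot(1/f)=1$ in $W^+(\DD)$, these two convolutions are inverse to one another. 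Note that $f\in W^+(\DD)$ with $f(0)\ne0$ guarantees, by Wiener's theorem, that $1/f\in W^+(\DD)$, so both coefficient sequences are absolutely summable and all the series in \eqref{E:inverse} converge absolutely.

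First I would verify the summability condition $\sum_{n\ge0}|b_{jn}|<\infty$ in \eqref{E:inverse}. With the proposed $b_{jn}=\gamma_n\widehat{f}(j-n)$, the sum is finite because it is a finite sum (lower-triangular, $0\le n\le j$). Next I would check the inverse identity $\sum_{n\ge0}b_{jn}a_{nk}=\delta_{jk}$. Substituting the guessed entries, the left side becomes
\[
\sum_{n=k}^{j}\gamma_n\widehat{f}(j-n)\,\gamma_n^{-1}\widehat{(1/f)}(n-k)
=\sum_{n=k}^{j}\widehat{f}(j-n)\,\widehat{(1/f)}(n-k),
\]
where the factors $\gamma_n$ cancel. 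Reindexing, this is exactly the $(j-k)$-th Taylor coefficient of the product $f\cdot(1/f)=1$, hence equal to $\widehat{1}(j-k)=\delta_{jk}$. This is the crux of why the $\gamma_n$-scaling is chosen the way it is: it is engineered precisely so that it disappears from the inverse relation.

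Finally I would estimate $B_j=\sum_{n\ge0}|b_{jn}|=\sum_{n=0}^{j}\gamma_n|\widehat{f}(j-n)|$. Writing this as $\sum_{m=0}^{j}\gamma_{j-m}|\widehat{f}(m)|$, the dominant contribution should come from the terms with small $m$, where $\gamma_{j-m}$ is close to $\gamma_j$ because $(\gamma_n)$ is increasing. For the upper bound $B_j=O(\gamma_j)$, since $\gamma_{j-m}\le\gamma_j$ for all $m\ge0$ (monotonicity), we get $B_j\le\gamma_j\sum_{m\ge0}|\widehat{f}(m)|=\gamma_j\|f\|_{W^+}$. For the lower bound, keeping only the $m=0$ term gives $B_j\ge\gamma_j|\widehat{f}(0)|=\gamma_j|f(0)|$, and $f(0)\ne0$ by hypothesis. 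Together these yield $B_j\asymp\gamma_j$, as claimed.

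I expect the main obstacle to be a genuinely careful treatment of the lower bound rather than the algebra of the inverse identity, which is essentially formal. The upper bound is immediate from monotonicity and absolute summability, but one must make sure the lower bound is truly uniform in $j$; the clean observation that the single $m=0$ term already gives $\gamma_j|f(0)|$, independent of the tail behaviour of the $\gamma_n$, is what makes the two-sided estimate $\asymp$ work without any additional regularity assumption on the sequence $(\gamma_n)$.
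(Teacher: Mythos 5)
Your construction is exactly the paper's: the same explicit left inverse $b_{jn}=\gamma_n\hat{f}(j-n)$, the same cancellation of the $\gamma_n$'s reducing the inverse identity to the convolution $\sum_{p+q=j-k}\hat{f}(p)\hat{(1/f)}(q)=\delta_{jk}$, and the same two-sided bound (monotonicity for the upper estimate, the single term $\gamma_j|\hat f(0)|$ for the lower). One caveat: your appeal to Wiener's theorem to get $1/f\in W^+(\DD)$ is both unjustified under the stated hypotheses ($f(0)\ne0$ does not prevent zeros elsewhere in $\overline{\DD}$ --- indeed in the paper's application $f(z)=(1-z)^{\alpha+1}$ vanishes at $z=1$) and unnecessary, since $\hat{(1/f)}$ need only exist as a formal power series and every sum involved is finite by lower-triangularity.
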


\begin{proof}
Let $B$ be the lower-triangular matrix with entries $(b_{jn})$ given by
\[
b_{jn}:=\gamma_n \hat{f}(j-n) \quad(0\le n\le j).
\]
For each fixed $j$, there are only finitely many $n$ for which $b_{jn}\ne0$, so 
the first condition in \eqref{E:inverse} is clearly satisfied. As for the second condition,
we have
\begin{align*}
\sum_{n\ge0}b_{jn}a_{nk}
&=\sum_{n\ge0}\gamma_n\hat{f}(j-n)\gamma_n^{-1}\hat{(1/f)}(n-k)\\
&=\sum_{\substack{p,q\ge0\\p+q=j-k}}\hat{f}(p)\hat{(1/f)}(q)
=\hat{1}(j-k)=\delta_{jk}.
\end{align*}
Thus $B$ is indeed a left inverse of $A$. Further, we have
\[
\sum_{n\ge0}|b_{jn}|=\sum_{n=0}^j\gamma_n|\hat{f}(j-n)|\le \gamma_j\sum_{\ell\ge0}|\hat{f}(\ell)|
\]
and
\[
\sum_{n\ge0}|b_{jn}|=\sum_{n=0}^j\gamma_n|\hat{f}(j-n)|\ge\gamma_j|\hat{f}(0)|,
\]
By assumption, we have $\sum_{\ell\ge0}|\hat{f}(\ell)|<\infty$ and $|\hat{f}(0)|>0$. It follows that
$\sum_{n\ge0}|b_{jn}|\asymp\gamma_j$, as claimed.
\end{proof}

\begin{remark}
In this case, it is easy to see that in fact $B$ is a two-sided inverse of $A$.
\end{remark}

We illustrate these results by applying them to one particular family of summability methods, 
namely the Ces\`aro means. 
Given $\alpha>-1$, let $A=(a_{nk})_{n,k\ge0}$ be the lower-triangular matrix defined by
\begin{equation}\label{E:cesaromatrix}
a_{nk}:=\binom{n}{k}\Big/\binom{n+\alpha}{k} \quad(0\le k\le n).
\end{equation}
(As usual, binomial coefficients with non-integer arguments are defined using the Gamma function.)
With this choice of $A$, we write
\[
\sigma_n^\alpha (x):=S_n^A(x) \quad(x\in X).
\]
In particular, we write $s_n$ for $\sigma_n^0$ and $\sigma_n$ for $\sigma_n^1$.

We remark that, if $-1< \alpha\le\beta$, then $\sigma_n^\beta$ includes $\sigma_n^\alpha$ in the sense that,
if $\sigma_n^\alpha(x)\to x$ for some $x\in X$, then also $\sigma_n^\beta(x)\to x$. (For scalars this is well known;
for Banach spaces, it follows by  \cite[Theorem~5.1]{MPR22a}.)

The following limitation theorem is an abstract version of \cite[Theorem~46]{Ha91}.

\begin{theorem}\label{T:cesarolimitation}
Let $\alpha\ge0$. If $\sigma_n^\alpha(x)\to x$ (weakly or in norm) for all $x\in X$, then
\[
\frac{\|e_j\|\|\psi_j\|}{|\langle e_j,\psi_j\rangle|}=O(j^\alpha) \quad(j\to\infty).
\]
\end{theorem}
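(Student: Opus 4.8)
The plan is to derive Theorem~\ref{T:cesarolimitation} as a direct specialization of the general limitation theorem, Theorem~\ref{T:limitation}, combined with the explicit inverse-bound computation of Theorem~\ref{T:inversebound}. The key observation is that the Ces\`aro matrix of order $\alpha$, as defined in \eqref{E:cesaromatrix}, is precisely of the form treated in Theorem~\ref{T:inversebound}: its entries $a_{nk}=\binom{n}{k}/\binom{n+\alpha}{k}$ should factor as $\gamma_n^{-1}$ times a Taylor coefficient $\hat{(1/f)}(n-k)$ for a suitable choice of $f\in W^+(\DD)$ and an increasing sequence $(\gamma_n)$. So the first step is to identify these data explicitly.

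First I would rewrite the binomial ratio. Using the Gamma-function definition, one computes
\[
\frac{\binom{n}{k}}{\binom{n+\alpha}{k}}=\frac{\Gamma(n+1)\,\Gamma(n+\alpha-k+1)}{\Gamma(n-k+1)\,\Gamma(n+\alpha+1)}.
\]
The aim is to separate this into a part depending only on $n$ (which will become $\gamma_n^{-1}$) and a part depending only on the difference $n-k$ (which will become $\hat{(1/f)}(n-k)$). Setting $\gamma_n:=\Gamma(n+\alpha+1)/\Gamma(n+1)$, one is left needing $\hat{(1/f)}(m)=\Gamma(m+\alpha+1)/(\Gamma(m+1)\,\Gamma(\alpha+1))$ for $m\ge0$; normalizing by $\Gamma(\alpha+1)$ so that $\hat{(1/f)}(0)=1$ suggests $\gamma_n:=\Gamma(n+\alpha+1)/(\Gamma(n+1)\,\Gamma(\alpha+1))$. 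The corresponding generating function is $1/f(z)=(1-z)^{-\alpha-1}$, whence $f(z)=(1-z)^{\alpha+1}$. The hypotheses of Theorem~\ref{T:inversebound} must then be verified: that $f\in W^+(\DD)$ with $f(0)\ne0$, and that $(\gamma_n)$ is increasing in $(0,\infty)$. Since $\alpha\ge0$, the binomial series for $(1-z)^{\alpha+1}$ has absolutely summable coefficients, $f(0)=1\ne0$, and the ratio $\gamma_{n+1}/\gamma_n=(n+\alpha+1)/(n+1)\ge1$ gives monotonicity; all of these are routine to check.

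With the factorization in hand, Theorem~\ref{T:inversebound} supplies a left inverse $B$ of $A$ satisfying $B_j:=\sum_{n\ge0}|b_{jn}|\asymp\gamma_j$. The final step is then to estimate the growth of $\gamma_j$ and feed it into Theorem~\ref{T:limitation}. By Stirling's formula, $\gamma_j=\Gamma(j+\alpha+1)/(\Gamma(j+1)\,\Gamma(\alpha+1))\asymp j^\alpha$ as $j\to\infty$. Since by hypothesis $\sigma_n^\alpha(x)=S_n^A(x)\to x$ for all $x\in X$ and $A$ admits the left inverse $B$, Theorem~\ref{T:limitation} yields
\[
\frac{\|e_j\|\,\|\psi_j\|}{|\langle e_j,\psi_j\rangle|}=O(B_j)=O(\gamma_j)=O(j^\alpha),
\]
which is exactly the assertion. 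I do not anticipate a serious obstacle here; the only point demanding care is the algebraic identification of $\gamma_n$ and $f$, where one must be sure the chosen normalization makes $\hat{(1/f)}(n-k)$ match the $n$-$k$-dependent factor exactly and leaves a clean increasing $\gamma_n$. Once that bookkeeping is done correctly, the asymptotics $\gamma_j\asymp j^\alpha$ and the two cited theorems close the argument immediately.
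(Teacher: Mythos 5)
Your proposal is correct and follows essentially the same route as the paper: factor the Ces\`aro matrix as $a_{nk}=\gamma_n^{-1}\hat{(1/f)}(n-k)$ with $\gamma_n=\binom{n+\alpha}{\alpha}$ and $f(z)=(1-z)^{\alpha+1}$, verify the hypotheses of Theorem~\ref{T:inversebound}, and conclude via Theorem~\ref{T:limitation} and Stirling. The only cosmetic difference is in checking $f\in W^+(\DD)$, where the paper uses a Cauchy--Schwarz estimate against $\|f'\|_{H^2}$ while you invoke the (equally valid) absolute summability of the binomial coefficients of $(1-z)^{\alpha+1}$.
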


\begin{proof}
Let $A$ be the Ces\`aro matrix defined by \eqref{E:cesaromatrix}.
A computation gives
\[
a_{nk}=\binom{n}{k}\Big/\binom{n+\alpha}{k}=\binom{n-k+\alpha}{\alpha}\Bigl/\binom{n+\alpha}{\alpha}=\gamma_n^{-1}\hat{g}(n-k),
\]
where $(\gamma_n)$ is the increasing sequence given by
\[
\gamma_n:=\binom{n+\alpha}{\alpha},
\]
and where $\hat{g}(m)$ are the Taylor coefficients of the function 
\[
g(z):=\sum_{m=0}^\infty \binom{m+\alpha}{\alpha}z^m=(1-z)^{-\alpha-1}.
\]
Clearly $g=1/f$, where $f(z):=(1-z)^{\alpha+1}$. 
Since the Taylor coefficients of $f$ satisfy
\[
\sum_{k\ge1}|\hat{f}(k)|=
\sum_{k\ge1}\Bigl|\frac{\hat{(f')}(k-1)}{k}\Bigr|
\le \Bigl(\sum_{k\ge1}\frac{1}{k^2}\Bigr)^{1/2}\|f'\|_{H^2}<\infty,
\]
we have $f\in W^+(\DD)$, and Theorem~\ref{T:inversebound} applies. We deduce that $A$ has a lower-triangular inverse $B=(b_{jn})$ such that
\[
\sum_{n\ge0}|b_{jn}|=O(\gamma_j)=O\Bigl(\binom{j+\alpha}{\alpha}\Bigr)=O(j^\alpha) \quad(j\to\infty),
\]
the last equality by Stirling's formula. The result now follows by applying Theorem~\ref{T:limitation}.
\end{proof}


\section{Applications in spaces of continuous functions}\label{S:continuous}

\subsection{Fourier series}\label{S:Fourier}
Probably the best-known applications of summability are to Fourier series,
so, for our first example, we see what the abstract theory developed in the previous
two sections tells us about this case.

Let $\TT$ denote the unit circle. We write $C(\TT)$ for the space of complex-valued continuous
functions on $\TT$, with the usual sup-norm $\|f\|_\infty:=\sup_\TT|f|$.
The dual space of $C(\TT)$ may be identified with $M(\TT)$, 
the space of finite complex Borel measures on $\TT$, the duality being given by
\[
\langle f,\mu\rangle:=\int_\TT f(\zeta)\,d\mu(\overline{\zeta})
\quad(f\in C(\TT),~\mu\in M(\TT)).
\]
Under this pairing, $M(\TT)$ inherits the norm of $C(\TT)^*$, which is just the total-variation norm.

The  absolutely continuous measures on $\TT$ form a  closed subspace of $M(\TT)$, which can be identified with
$L^1(\TT)$ via $g\leftrightarrow g(e^{it})dt/2\pi$. The restriction of the total variation norm to $L^1(\TT)$
is just the usual $L^1$ norm.

We now proceed to apply the theory developed in \S\ref{S:summability}.
Let $X:=C(\TT)$, and for $k\in\ZZ$ let $e_k:=e^{ikt}$ and $\psi_k:=e^{ikt}\,dt/2\pi$.
(Here the index $k$ runs through all the integers rather than just the positive integers,
but this creates no problems.)
Then $\|e_k\|_\infty=\|\psi_k\|_1=1$ and $\langle e_k,\psi_k\rangle=1$ for all $k\in\ZZ$.

Let $A=(a_{nk})_{n\ge0, k\in\ZZ}$ be an infinite matrix of complex scalars
such that $\sum_{k\in\ZZ}|a_{nk}|<\infty$ for each~$n\ge0$. Then, for each $n\ge0$, we have
\begin{align*}
S_n^A(f)&=\sum_{k\in\ZZ}a_{nk}\frac{\langle f,\psi_k\rangle}{\langle e_k,\psi_k\rangle} e_k=\sum_{k\in\ZZ}a_{nk}\hat{f}(k)e^{ikt} \quad(f\in C(\TT)),\\
(S_n^A)^*(\mu)&=\sum_{k\in\ZZ}a_{nk}\frac{\langle e_k,\mu\rangle}{\langle e_k,\psi_k\rangle} \psi_k=\sum_{k\in\ZZ}a_{nk}\hat{\mu}(k)e^{ikt}\,\frac{dt}{2\pi} \quad(\mu\in M(\TT)).
\end{align*}

By Fej\'er's theorem, we have $\|\sigma_nf- f\|_\infty\to0$ as $n\to\infty$ for all $f\in C(\TT)$,
so there exists at least one matrix $A^0$ for which $\|S_n^{A^0}(f)-f\|_\infty\to0$. Thus 
Theorem~\ref{T:equivalence} applies. In the notation of Theorem~\ref{T:equivalence}, 
$Z$ is the norm-closure in $M(\TT)$ of $\spn\{e^{ikt} dt/2\pi:k\in\ZZ\}$, which is exactly $L^1(\TT)$. 
Thus we obtain the following theorem.

\begin{theorem}\label{T:Fourier}
Let $(a_{nk})_{n\ge0,k\in\ZZ}$ be an infinite matrix of complex scalars such that $\sum_{k\in\ZZ}|a_{nk}|<\infty$
for each $n\ge0$. Then the following statements are equivalent:
\begin{enumerate}[\normalfont\rm(i)]
\item $\sum_{k\in\ZZ}a_{nk}\hat{f}(k)e^{ikt} \to f$ in $(C(\TT),w)$, for all $f\in C(\TT)$;
\item $\sum_{k\in\ZZ}a_{nk}\hat{f}(k)e^{ikt}\to f$ in $(C(\TT),\|\cdot\|_\infty)$), for all $f\in C(\TT)$;
\item $\sum_{k\in\ZZ}a_{nk}\hat{\mu}(k)e^{ikt}dt/2\pi\to \mu$ in $(M(\TT),w^*)$, for all $\mu\in M(\TT)$;
\item $\sum_{k\in\ZZ}a_{nk}\hat{g}(k)e^{ikt}dt/2\pi\to g(e^{it})dt/2\pi$ in $(M(\TT),w^*)$ for all $g\in L^1(\TT)$;
\item $\sum_{k\in\ZZ}a_{nk}\hat{g}(k)e^{ikt}\to g$ in $(L^1(\TT),\|\cdot\|_1)$, for all $g\in L^1(\TT)$.
\end{enumerate}
\end{theorem}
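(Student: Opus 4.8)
The plan is to recognize Theorem~\ref{T:Fourier} as nothing more than the specialization of the abstract equivalence Theorem~\ref{T:equivalence} to the concrete data introduced just before the statement, so that almost all the work is already done. First I would confirm that $(e_k,\psi_k)_{k\in\ZZ}$, with $e_k:=e^{ikt}$ and $\psi_k:=e^{ikt}\,dt/2\pi$, genuinely fit the basic set-up of \S\ref{S:setuo}. The required biorthogonality $\langle e_j,\psi_k\rangle=0$ for $j\ne k$ together with $\langle e_k,\psi_k\rangle=1$ is immediate from the orthogonality of the characters on $\TT$. The only structural wrinkle is that the index set here is $\ZZ$ rather than $\NN$; I would simply note, as the text already does, that the abstract results hold verbatim for any countable index set, so this causes no difficulty.

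Next I would verify the two hypotheses of Theorem~\ref{T:equivalence}. For condition \eqref{E:convcond}, the point is that $\|P_k\|=\|e_k\|_\infty\|\psi_k\|_1/|\langle e_k,\psi_k\rangle|=1$ for every $k$, so \eqref{E:convcond} collapses to the assumed $\sum_{k\in\ZZ}|a_{nk}|<\infty$. The explicit formulas for $S_n^A(f)$ and $(S_n^A)^*(\mu)$ recorded in the preceding discussion then identify the abstract summation operators with the concrete Fourier ones appearing in (i)--(v). For the second hypothesis, the existence of a matrix $A^0$ realizing statement (ii) of Theorem~\ref{T:equivalence} is supplied by Fej\'er's theorem, which gives $\|\sigma_n f-f\|_\infty\to0$ for all $f\in C(\TT)$.

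With the hypotheses in place, Theorem~\ref{T:equivalence} applies and yields the equivalence of its statements (i)--(v); the remaining task is a line-by-line translation under the identifications $X=C(\TT)$, $X^*=M(\TT)$, and $Z=\overline{\spn\{e^{ikt}\,dt/2\pi:k\in\ZZ\}}=L^1(\TT)$. The step needing the most care---and the closest thing to an obstacle in an otherwise routine argument---is this last identification together with the norm matching in statement (v). Here I would rely on the two facts recorded in the set-up: that $L^1(\TT)$ is a norm-closed subspace of $M(\TT)$ containing all the $\psi_k$, whence $Z=L^1(\TT)$ since the exponentials span a dense subspace of $L^1(\TT)$; and that the total-variation norm restricted to $L^1(\TT)$ coincides with the usual $L^1$ norm. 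Granting these, norm convergence in $Z$ is exactly $L^1$-convergence, and the translation of (v) goes through. I do not expect any genuine difficulty, since the entire content of the theorem is carried by the abstract machinery of \S\ref{S:operator} and \S\ref{S:summability}.
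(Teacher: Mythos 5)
Your proposal is correct and follows essentially the same route as the paper: both specialize Theorem~\ref{T:equivalence} to $X=C(\TT)$ with $e_k=e^{ikt}$, $\psi_k=e^{ikt}\,dt/2\pi$, invoke Fej\'er's theorem for the existence of $A^0$, and identify $Z$ with $L^1(\TT)$. The points you flag as needing care (the index set $\ZZ$, the identification $Z=L^1(\TT)$, and the matching of the total-variation norm with the $L^1$ norm) are exactly the ones the paper handles in the discussion preceding the theorem.
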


We now use this theorem to deduce some classical results about Ces\`aro summation of Fourier series.
Since the summation index $k$ runs over $\ZZ$ rather than $\ZZ^+$, the definition of $\sigma_n^\alpha$ needs to be
modified accordingly, taking $a_{nk}:=\binom{n}{|k|}/\binom{n+\alpha}{|k|}$ for $|k|\le n$ and $a_{nk}:=0$ for $|k|>n$.  As usual, we  write $s_n$ for $\sigma_n^0$ and $\sigma_n$ for $\sigma_n^1$ (this was already implicit when we quoted
Fej\'er's theorem above).

\begin{theorem}\label{T:Fourierdiv}
\begin{enumerate}[\normalfont\rm(1)]
\item If $\mu=\delta_1$,  then $s_n(\mu)\not\to\mu$  in $(M(\TT),w^*)$.
\item There exists $f\in C(\TT)$ such that $s_n(f)\not\to f$ in $(C(\TT),w)$.
\item There exists $g\in L^1(\TT)$ such that $s_n(g)\not\to g$ in $(M(\TT),w^*)$.
\end{enumerate}
\end{theorem}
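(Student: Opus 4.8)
The plan is to reduce all three parts to a single classical fact, namely that the Lebesgue constants are unbounded. Recall that $s_n$ acts on $C(\TT)$ by convolution with the Dirichlet kernel $D_n(t)=\sum_{|k|\le n}e^{ikt}$, so that its operator norm equals $L_n:=\|D_n\|_{L^1(\TT)}=\frac{1}{2\pi}\int_{-\pi}^{\pi}|D_n(t)|\,dt$, and it is classical and elementary that $L_n\to\infty$ as $n\to\infty$. I would isolate this divergence as the only analytic ingredient; everything else is then formal, via the equivalence in Theorem~\ref{T:Fourier}.

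\textbf{Part (1), the only computational step.} First I would handle $\mu=\delta_1$, which needs a concrete witness. One computes $\hat{\delta_1}(k)=\langle e_k,\delta_1\rangle=1$ for every $k$, so the formula for the adjoint gives $s_n(\delta_1)=D_n(t)\,dt/2\pi$, the measure whose density is the Dirichlet kernel. This measure has total-variation norm $\|D_n\|_{L^1}=L_n$, so the functional $\Lambda_n:=\langle\,\cdot\,,s_n(\delta_1)\rangle$ on $C(\TT)$ satisfies $\|\Lambda_n\|=L_n\to\infty$. If $s_n(\delta_1)\to\delta_1$ held in $(M(\TT),w^*)$, then
\[
\Lambda_n(f)\longrightarrow\langle f,\delta_1\rangle \qquad (f\in C(\TT)),
\]
but a pointwise-convergent sequence of functionals on a Banach space is uniformly bounded by the Banach--Steinhaus theorem, contradicting $\|\Lambda_n\|\to\infty$. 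Hence $s_n(\delta_1)\not\to\delta_1$ weak*, which is part~(1).

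\textbf{Parts (2) and (3), purely formal.} Part~(1) shows precisely that statement~(iii) of Theorem~\ref{T:Fourier} fails for the matrix defining $s_n$. Since Fej\'er's theorem supplies a matrix $A^0$ with $\|S_n^{A^0}f-f\|_\infty\to0$, the hypotheses of Theorem~\ref{T:Fourier} are met, so statements (i)--(v) are all equivalent; as one of them fails, they all fail. The negation of statement~(i) is exactly the assertion that there exists $f\in C(\TT)$ with $s_n(f)\not\to f$ weakly, which is part~(2); the negation of statement~(iv) is exactly the assertion that there exists $g\in L^1(\TT)$ with $s_n(g)\not\to g$ weak*, which is part~(3). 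No further computation is required.

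\textbf{Main obstacle.} The only genuine content is concentrated in part~(1), and even there it is merely the classical divergence $L_n\to\infty$ together with a uniform-boundedness argument. The remaining delicacy is bookkeeping: getting the Fourier-coefficient conventions right in the presence of the conjugate in the duality pairing $\langle f,\mu\rangle=\int_\TT f(\zeta)\,d\mu(\overline\zeta)$ when computing $\hat{\delta_1}$ and $s_n(\delta_1)$. Once part~(1) is established, the reduction of parts~(2) and~(3) to the equivalence theorem is immediate.
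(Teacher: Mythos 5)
Your proposal is correct and follows essentially the same route as the paper: compute $s_n(\delta_1)=D_n(t)\,dt/2\pi$, note that $\|D_n\|_{L^1}\to\infty$ so the sequence cannot be weak*-convergent (the paper leaves the Banach--Steinhaus step implicit, which you spell out), and then deduce (2) and (3) formally from the equivalences in Theorem~\ref{T:Fourier}.
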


\begin{proof}
(1) If $\mu=\delta_1$, then $\hat{\mu}(k)=1$ for all $k\in\ZZ$, so
\[
s_n(\mu)=\sum_{k=-n}^n e^{ikt}\frac{dt}{2\pi}=D_n(t)\,\frac{dt}{2\pi},
\]
where $D_n(t)$ is the Dirichlet kernel. We know that
\[
\|s_n(\mu)\|_{M(\TT)}=\|D_n\|_1\asymp \log n\to\infty,
\]
so $(s_n(\mu))$ is not weak*-convergent. 

Parts (2) and (3) now follow from (1) by applying Theorem~\ref{T:Fourier}.
\end{proof}

\begin{remark}
Part (2) is a weak form of a famous result of du Bois-Reymond,
who showed that the Fourier series of a continuous function may diverge at a point.
\end{remark}

\begin{theorem}\label{T:Fourierconv} Let $\alpha>0$. 
\begin{enumerate}[\normalfont\rm(1)]
\item $\|\sigma_n^\alpha(f)-f\|_\infty\to0$ for all $f\in C(\TT)$.
\item $\|\sigma_n^\alpha(g)-g\|_1\to0$  for all $g\in L^1(\TT)$.
\item $\sigma_n^\alpha(\mu)\to\mu$ in $(M(\TT),w^*)$ for all $\mu\in M(\TT)$.
\end{enumerate}
\end{theorem}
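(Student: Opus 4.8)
The plan is to lean on Theorem~\ref{T:Fourier}, which collapses all three assertions into one. The Ces\`aro matrix has only finitely many nonzero entries in each row, so the hypothesis $\sum_{k\in\ZZ}|a_{nk}|<\infty$ holds automatically and Theorem~\ref{T:Fourier} is available. Under its numbering, statement~(1) here is precisely condition~(ii) (sup-norm convergence in $C(\TT)$), statement~(2) is condition~(v) ($L^1$-convergence), and statement~(3) is condition~(iii) (weak*-convergence in $M(\TT)$). Since Theorem~\ref{T:Fourier} asserts these are all equivalent, it suffices to prove a single one of them, and I would prove~(1).

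For~(1) I would distinguish two ranges of $\alpha$. When $\alpha\ge1$ there is nothing analytic to do: Fej\'er's theorem supplies $\|\sigma_n^1(f)-f\|_\infty\to0$ for every $f\in C(\TT)$, and the inclusion property noted just before Theorem~\ref{T:cesarolimitation}---if $\sigma_n^\alpha(x)\to x$ then $\sigma_n^\beta(x)\to x$ for all $\beta\ge\alpha$---carries this to every $\beta\ge1$. The genuine case is $0<\alpha<1$. There I would write $\sigma_n^\alpha(f)=K_n^\alpha*f$ (convolution with respect to $dt/2\pi$), where
\[
K_n^\alpha(t)=\sum_{|k|\le n}\frac{\binom{n}{|k|}}{\binom{n+\alpha}{|k|}}\,e^{ikt},
\]
and verify that $(K_n^\alpha)_{n\ge0}$ is an approximate identity: that $\int_\TT K_n^\alpha\,dt/2\pi=a_{n0}=1$, that $\sup_n\|K_n^\alpha\|_1<\infty$, and that $\int_{\delta\le|t|\le\pi}|K_n^\alpha(t)|\,dt/2\pi\to0$ for each fixed $\delta>0$. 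The standard approximate-identity argument then yields $\|K_n^\alpha*f-f\|_\infty\to0$ for continuous $f$, which is~(1).

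The crux, and the only real obstacle, is the uniform $L^1$-bound on $K_n^\alpha$ when $0<\alpha<1$. Since the kernel is no longer nonnegative in this range, one cannot read the $L^1$-norm off the value at $0$, as one does for Fej\'er. Instead I would invoke the classical pointwise asymptotics of the Ces\`aro kernel (as found, for instance, in Zygmund's treatise), which give a bound of the shape
\[
|K_n^\alpha(t)|\le C_\alpha\min\{\,n,\ n^{-\alpha}|t|^{-\alpha-1}\,\}\qquad(0<|t|\le\pi).
\]
Integrating this estimate over $|t|\le\pi$ delivers $\sup_n\|K_n^\alpha\|_1<\infty$, while integrating it over $\delta\le|t|\le\pi$ delivers the localization, both in one stroke. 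With this analytic input secured, statement~(1) drops out of the approximate-identity argument, and statements~(2) and~(3) then come for free from Theorem~\ref{T:Fourier}.
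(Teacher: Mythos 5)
Your proposal is correct and follows essentially the same route as the paper: the paper proves part (1) by citing the classical theorem of M.~Riesz and then deduces (2) and (3) from the equivalences in Theorem~\ref{T:Fourier}, exactly as you do. The only difference is that you additionally sketch a proof of the Riesz result itself via the standard approximate-identity argument and Zygmund's pointwise bound $|K_n^\alpha(t)|\le C_\alpha\min\{n,\,n^{-\alpha}|t|^{-\alpha-1}\}$, which is a correct and standard way to supply the classical input that the paper simply cites.
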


\begin{proof}
Part~(1) is a classical result of M. Riesz \cite{Ri11}.
Parts (2) and (3) follow, using Theorem~\ref{T:Fourier}.
\end{proof}


\subsection{The disk algebra}\label{S:diskalg}

The disk algebra $A(\DD)$ consists of those holomorphic functions on 
the open unit disk $\DD$ that have a continuous extension to $\overline{\DD}$.
It is a Banach space (indeed a Banach algebra) with respect to the sup-norm.

By the maximum principle, the map $f\mapsto f|_\TT$ is an isometry
of $A(\DD)$ into $C(\TT)$, so $A(\DD)$ can be identified with a closed subspace $A$ of $C(\TT)$. 
In fact $A=\{f\in C(\TT): \hat{f}(k)=0 ~\forall k<0\}$.
Therefore the dual of $A(\DD)$ may be identified with the quotient $M(\TT)/A^\perp$, 
where 
\[
A^\perp:=\Bigl\{\mu\in M(\TT):\int_\TT f\,d\mu=0~ \forall f\in A\Bigr\}.
\]
By the F.~\&~M.~Riesz theorem, if $\mu\in A^\perp$,
then $\mu$ is absolutely continuous with respect to Lebesgue measure on $\TT$, say $\mu=h \,dt/2\pi$. The condition that 
$h\,dt/2\pi\in A^\perp$ is equivalent to $h\in \overline{H^1_0}$,
where
\[
\overline{H}_0^1:=\{h\in L^1(\TT): \hat{h}(k)=0 ~\forall k\ge0\}.
\]
Thus we can identify the dual space $A(\DD)^*$ with $M(\TT)/\overline{H^1_0}$.

There is another way to express this duality, using
Cauchy transforms. Given $\mu\in M(\TT)$, we define its
Cauchy transform $K\mu:\DD\to\CC$ by
\[
K\mu(z):=\int_\TT \frac{d\mu(\zeta)}{1-\overline{\zeta}z}
\quad(z\in \DD).
\]
Notice that $K\mu\equiv0\iff \mu\in \overline{H_0^1}$.
Hence the map $[\mu]\mapsto K\mu$ is a linear isomorphism
of $M(\TT)/\overline{H_0^1}$ onto $\cK$, where
\[
\cK:=\{K\mu:\mu\in M(\TT)\}.
\]
We endow $\cK$ with the norm that makes this isomorphism an isometry, namely
\[
\|K\mu\|_\cK:=\|[\mu]\|_{M(\TT)/\overline{H_0^1}}=\dist(\mu, \overline{H_0^1})
\quad(\mu\in M(\TT)).
\]
Thus, finally, the dual of $A(\DD)$ may be identified with $\cK$,
the duality pairing being given by
\[
\langle f,K\mu\rangle:=\int_\TT f(\zeta)\,d\mu(\overline{\zeta})
=\lim_{r\to1^-}\sum_{k=0}^\infty \hat{f}(k)\hat{\mu}(k)r^k
\quad(f\in A(\DD),~K\mu\in\cK).
\]

We now apply the theory developed in \S\ref{S:summability}.
Let $X:=A(\DD)$ and $X^*=\cK$.
For $k\ge0$, we define $e_k\in A(\DD)$ by $e_k(z):=z^k$, and $\psi_k\in\cK$ by
\[
\psi_k(z):=K\Bigl(e^{ikt}\,\frac{dt}{2\pi}\Bigr)(z)=z^k.
\]
It is easily checked that $\|e_k\|_\infty=\|\psi_k\|_\cK=1$ and that $\langle e_k,\psi_k\rangle=1$ for all~$k$.

Let $A=(a_{nk})_{n,k\ge0}$ be an infinite matrix of complex scalars
such that $\sum_{k\ge0}|a_{nk}|<\infty$ for each~$n\ge0$. Then, for each $n\ge0$, we have
\begin{align*}
S_n^A(f)&=\sum_{k\ge0}a_{nk}\frac{\langle f,\psi_k\rangle}{\langle e_k,\psi_k\rangle} e_k
=\sum_{k\ge0}a_{nk}\hat{f}(k)z^k \quad(f\in A(\DD)),\\
(S_n^A)^*(K\mu)&=\sum_{k\ge0}a_{nk}\frac{\langle e_k,K\mu\rangle}{\langle e_k,\psi_k\rangle} \psi_k
=\sum_{k\ge0}a_{nk}\hat{\mu}(k)z^k \quad(K\mu\in \cK).
\end{align*}

Just as in the case of $C(\TT)$, 
Fej\'er's theorem implies that $\|\sigma_nf- f\|_\infty\to0$ as $n\to\infty$ for all $f\in A(\DD)$,
so there exists at least one matrix $A^0$ for which $\|S_n^{A^0}(f)-f\|_\infty\to0$. Thus 
Theorem~\ref{T:equivalence} applies. In the notation of Theorem~\ref{T:equivalence}, 
$Z$ is the norm-closure in $\cK$ of $\spn\{K(e^{ikt} dt/2\pi):k\ge0\}$, which is  $L^1(\TT)/\overline{H^1_0}$. 
Thus we obtain the following theorem.

\begin{theorem}
Let $(a_{nk})_{n,k\ge0}$ be an infinite matrix of complex scalars such that $\sum_{k\in\ZZ}|a_{nk}|<\infty$
for each $n\ge0$. Then the following statements are equivalent:
\begin{enumerate}[\normalfont\rm(i)]
\item $\sum_{k\ge0}a_{nk}\hat{f}(k)z^k \to f$ in $(A(\DD),w)$, for all $f\in A(\DD)$;
\item $\sum_{k\ge0}a_{nk}\hat{f}(k)z^k\to f$ in $(A(\DD),\|\cdot\|_\infty)$ for all $f\in A(\DD)$;
\item $\sum_{k\ge0}a_{nk}\hat{\mu}(k)z^k\to K\mu$ in $(\cK,w^*)$, for all $\mu\in M(\TT)$;
\item $\sum_{k\ge0}a_{nk}\hat{g}(k)z^k\to Kg$ in $(\cK,w^*)$, for all $g\in L^1(\TT)$;
\item $\sum_{k\ge0}a_{nk}\hat{g}(k)z^k\to g$ in $(L^1/\overline{H^1_0},\|\cdot\|_{L^1/\overline{H^1_0}})$, for all $g\in L^1(\TT)$.
\end{enumerate}
\end{theorem}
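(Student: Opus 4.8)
The plan is to obtain this theorem as a direct specialization of the abstract Theorem~\ref{T:equivalence} to the concrete system $(e_k,\psi_k)_{k\ge0}$ introduced above, where $e_k(z)=z^k$ in $A(\DD)$ and $\psi_k(z)=z^k$ in $\cK$. First I would confirm that this system satisfies the standing hypotheses of \S\ref{S:setuo}: the relations $\langle e_j,\psi_k\rangle=0$ for $j\ne k$ and $\langle e_k,\psi_k\rangle=1$ hold, so each $P_k$ is a well-defined rank-one projection, and since $\|e_k\|_\infty=\|\psi_k\|_\cK=1$ we have $\|P_k\|=1$ for every $k$. Consequently the convergence requirement \eqref{E:convcond} reduces to $\sum_{k\ge0}|a_{nk}|<\infty$, which is exactly the hypothesis imposed on $A$.

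Next I would supply the one external input that Theorem~\ref{T:equivalence} needs, namely a single matrix $A^0$ for which $S_n^{A^0}f\to f$ for all $f$. Fej\'er's theorem provides this: the Ces\`aro matrix gives $\|\sigma_n f-f\|_\infty\to0$ for every $f\in A(\DD)$, so the hypothesis of Theorem~\ref{T:equivalence} is met. With this in hand the five statements (i)--(v) of Theorem~\ref{T:equivalence} are all equivalent, and the remaining task is purely translational: I would read off each of them through the explicit formulas for $S_n^A(f)$ and $(S_n^A)^*(K\mu)$ displayed above. Statements (i) and (ii) are then verbatim the weak- and norm-convergence statements of the present theorem. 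Statement (iii) matches because every functional in $X^*=\cK$ is of the form $K\mu$ with $\mu\in M(\TT)$, so ``for all $\phi\in X^*$'' is precisely ``for all $\mu\in M(\TT)$''. Statements (iv) and (v) concern convergence of $(S_n^A)^*\phi$ for $\phi\in Z$, and these become the assertions about $g\in L^1(\TT)$ once $Z$ is identified and a general element of $Z$ is written as $Kg$.

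The one point that genuinely requires care---and which I regard as the crux---is the identification of the abstract space $Z=\overline{\spn\{\psi_k:k\ge0\}}$ with $L^1(\TT)/\overline{H^1_0}$ inside $\cK$. This rests on the F.~\&~M.~Riesz description of $A(\DD)^*$ and on the fact that, modulo $\overline{H^1_0}$, every $g\in L^1(\TT)$ is represented by its analytic part, together with the density of analytic polynomials in $H^1$; these are exactly what force the norm-closure of $\spn\{K(e^{ikt}\,dt/2\pi):k\ge0\}$ to fill out all of $L^1(\TT)/\overline{H^1_0}$. Since this identification has already been carried out in the discussion preceding the statement, the proof itself collapses to a single invocation of Theorem~\ref{T:equivalence}; no further estimates are needed, and in particular the non-reflexivity of $A(\DD)$ is why only (i)--(v), and not a norm-convergence statement on all of $\cK$, appear.
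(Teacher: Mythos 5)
Your proposal is correct and follows the paper's own route exactly: verify the standing hypotheses for $e_k=z^k$ and $\psi_k=K(e^{ikt}\,dt/2\pi)$, use Fej\'er's theorem to supply the matrix $A^0$, identify $Z$ with $L^1(\TT)/\overline{H^1_0}$, and invoke Theorem~\ref{T:equivalence}. The paper likewise treats the theorem as an immediate corollary of the preceding set-up, so there is nothing to add.
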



\section{Application to Hardy spaces and Bergman spaces}\label{S:HardyBergman}

\subsection{Hardy spaces, $\bmoa$ and $\vmoa$}\label{S:Hardy}

We begin by reviewing the definitions and some basic facts about these spaces. 
All the details can be found in \cite[Chapter~9]{Zh07}.

For $1\le p<\infty$, the Hardy space $H^p$ is defined as the set of $f\in\hol(\DD)$ such that
\[
\|f\|_{H^p}:=\sup_{r<1}\Bigl(\frac{1}{2\pi}\int_0^{2\pi}|f(re^{i\theta})|^p\,d\theta\Bigr)^{1/p}<\infty.
\]
Also $H^\infty$ is  the set of bounded holomorphic functions on $\DD$, with 
\[
\|f\|_{H^\infty}:=\sup_\DD|f|.
\]

The space $\bmoa$ of holomorphic functions of bounded mean oscillation can be characterized as the 
space of $f\in H^2$ such that
\[
\|f\|_{\bmoa}:=|f(0)|+\sup_{\DD}(P|f|^2-|f|^2)^{1/2}<\infty,
\]
where $P|f|^2$ denotes the Poisson integral of $|f|^2$.
The space $\vmoa$ of holomorphic functions of vanishing mean oscillation
 is the closed subspace of $\bmoa$ consisting of those $f\in\bmoa$
such that $(P|f|^2-|f|^2)(z)\to0$ as $|z|\to1$.

All the spaces $H^p,\vmoa,\bmoa$ are Banach spaces and contain the polynomials.
Polynomials are dense in $H^p~(1\le p<\infty)$ and in $\vmoa$, but not in $H^\infty$ or $\bmoa$,
since neither of the latter is separable.

We have the following identifications of  dual spaces (up to isomorphism):  $(\vmoa)^*\cong H^1$ and $(H^1)^*\cong\bmoa$.
The pairings are given by
\begin{align*}
\langle f,g\rangle&:=\lim_{r\to1^-}\frac{1}{2\pi}\int_0^{2\pi}f(re^{i\theta})g(re^{-i\theta})\,d\theta
\quad(f\in \vmoa,~g\in H^1),\\
\langle g,h\rangle&:=\lim_{r\to1^-}\frac{1}{2\pi}\int_0^{2\pi}g(re^{i\theta})h(re^{-i\theta})\,d\theta
\quad(g\in H^1,~h\in\bmoa).
\end{align*}

We now proceed to apply the theory developed in \S\ref{S:summability}.
Let $X:=H^1$, let $e_k:=z^k\in H^1$ and let $\psi_k:=z^k\in \bmoa$.
Then $\|e_k\|=1$, $\|\psi_k\|\asymp1$ and $\langle e_k,\psi_k\rangle=1$ for all $k\ge0$.

Let $(a_{nk})_{n,k\ge0}$ be a matrix of complex scalars such that $\sum_k|a_{nk}|<\infty$ for each~$n$.
Then
\begin{align*}
S_n^A(g)&=\sum_{k\ge0}a_{nk}\frac{\langle g,\psi_k\rangle}{\langle e_k,\psi_k\rangle}e_k=\sum_{k\ge0}a_{nk}\hat{g}(k)z^k
\quad(g\in H^1),\\
(S_n^A)^*(h)&=\sum_{k\ge0}a_{nk}\frac{\langle e_k,h\rangle}{\langle e_k,\psi_k\rangle}\psi_k=\sum_{k\ge0}a_{nk}\hat{h}(k)z^k \quad(h\in \bmoa).
\end{align*}

By an appropriate version of Fej\'er's theorem, $\|\sigma_n(g)-g\|_{H^1}\to0$ for all $g\in H^1$.
Theorem~\ref{T:equivalence} therefore applies.
In the notation of Theorem~\ref{T:equivalence}, $Z$ is the norm-closure in $\bmoa$ of $\spn\{z^k: k\ge0\}$,
which is exactly $\vmoa$. 
We thus obtain the following result.

\begin{theorem}\label{T:BMOA}
Let $(a_{nk})_{n,k\ge0}$ be an infinite matrix of complex scalars such that $\sum_k|a_{nk}|<\infty$ for each~$n$.
Then the following statements are equivalent:
\begin{enumerate}[\normalfont\rm(i)]
\item $\sum_{k\ge0}a_{nk}\hat{g}(k)z^k\to g$ in $(H^1,w)$, for all $g\in H^1$;
\item $\sum_{k\ge0}a_{nk}\hat{g}(k)z^k \to g$ in $(H^1,\|\cdot\|_{H^1})$, for all $g\in H^1$;
\item $\sum_{k\ge0} a_{nk}\hat{h}(k)z^k\to h$ in $(\bmoa,w^*)$, for all $h\in \bmoa$;
\item $\sum_{k\ge0} a_{nk}\hat{f}(k)z^k\to f$ in $(\vmoa,w)$, for all $f\in \vmoa$;
\item $\sum_{k\ge0}a_{nk}\hat{f}(k)z^k\to f$ in $(\vmoa,\|\cdot\|_{\bmoa})$, for all $f\in \vmoa$.
\end{enumerate}
\end{theorem}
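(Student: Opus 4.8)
The plan is to recognize Theorem~\ref{T:BMOA} as a transcription of Theorem~\ref{T:equivalence} into the concrete language of $H^1$, $\bmoa$, and $\vmoa$, once the requisite identifications have been checked. I would take $X:=H^1$ and $X^*\cong\bmoa$ under the stated pairing, with $e_k:=z^k\in H^1$ and $\psi_k:=z^k\in\bmoa$. The first routine step is to confirm the orthogonality relations $\langle e_j,\psi_k\rangle=\delta_{jk}$ (so in particular $\langle e_k,\psi_k\rangle=1\ne0$), and to note that $\|P_k\|=\|e_k\|_{H^1}\|\psi_k\|_{\bmoa}/|\langle e_k,\psi_k\rangle|\asymp1$, since $\|e_k\|_{H^1}=1$ and $\|\psi_k\|_{\bmoa}\asymp1$. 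Consequently condition~\eqref{E:convcond} is equivalent to the hypothesis $\sum_k|a_{nk}|<\infty$, and the formulas for $S_n^A$ and $(S_n^A)^*$ are exactly those displayed before the theorem.

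Next I would supply the matrix $A^0$ required by Theorem~\ref{T:equivalence}. The appropriate version of Fej\'er's theorem gives $\|\sigma_n(g)-g\|_{H^1}\to0$ for every $g\in H^1$, so the Ces\`aro matrix serves as $A^0$ with norm convergence in $X=H^1$. With this in hand, Theorem~\ref{T:equivalence} applies verbatim, and its conclusions (i)--(v) become the five conclusions of the present theorem, provided that $Z$ — the norm-closure in $\bmoa$ of $\spn\{z^k:k\ge0\}$ — is identified with $\vmoa$. Since $\vmoa$ is by definition the closure of the polynomials in the $\bmoa$-norm, this identification is immediate. Conclusion~(vi) of Theorem~\ref{T:equivalence} does not enter, as it requires $X=H^1$ to be reflexive, which fails.

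The one point demanding genuine care is the translation of statements (iv) and (v). Theorem~\ref{T:equivalence} delivers these as $w^*$-convergence, respectively norm convergence, in $\bmoa$ for elements $\phi\in Z=\vmoa$, whereas the present theorem phrases them using the weak topology and the norm of $\vmoa$ itself. For the norm this is trivial, since the $\vmoa$-norm is by definition the restriction of the $\bmoa$-norm. For the topology I would argue as follows: the weak* topology on $\bmoa=(H^1)^*$ is $\sigma(\bmoa,H^1)$, and because $\vmoa$ is a subspace carrying the same duality pairing with $H^1$, the trace of this topology on $\vmoa$ is $\sigma(\vmoa,H^1)$; but $(\vmoa)^*\cong H^1$, so $\sigma(\vmoa,H^1)$ is precisely the weak topology of $\vmoa$. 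Hence $w^*$-convergence in $\bmoa$ of a sequence lying in $\vmoa$ toward a limit in $\vmoa$ coincides with weak convergence in $\vmoa$, and statements (iv)--(v) match.

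The anticipated main obstacle is exactly this topological identification — verifying that restricting the weak* topology of $\bmoa$ to the subspace $\vmoa$ reproduces the weak topology of $\vmoa$. Everything else (the orthogonality relations, the estimates $\|e_k\|_{H^1}=1$ and $\|\psi_k\|_{\bmoa}\asymp1$, and the existence of $A^0$) is bookkeeping once the standard duality facts $(\vmoa)^*\cong H^1$ and $(H^1)^*\cong\bmoa$, recalled in the text, are taken as given.
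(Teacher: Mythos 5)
Your proposal is correct and follows essentially the same route as the paper: take $X=H^1$, $e_k=\psi_k=z^k$, verify $\langle e_j,\psi_k\rangle=\delta_{jk}$ and $\|P_k\|\asymp1$, use Fej\'er's theorem in $H^1$ to supply $A^0$, identify $Z$ with $\vmoa$, and invoke Theorem~\ref{T:equivalence}. The one point you single out --- that the restriction of the $w^*$-topology $\sigma(\bmoa,H^1)$ to $\vmoa$ coincides with the weak topology $\sigma(\vmoa,(\vmoa)^*)$ via $(\vmoa)^*\cong H^1$ --- is handled correctly and is a detail the paper leaves implicit.
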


\begin{remark}
Explicitly, statement (iii) means that, for all $h\in \bmoa$ and all  $g\in H^1$,
\[
\lim_{n\to\infty}\lim_{r\to1^-}\sum_{k\ge0}a_{nk}r^k\hat{g}(k)\hat{h}(k)=
\lim_{r\to1^-}\frac{1}{2\pi}\int_0^{2\pi} g(re^{i\theta})h(re^{-i\theta})\,d\theta.
\]
\end{remark}

We now specialize to the case of Ces\`aro means. 

\begin{theorem}\label{T:BMOAdiv}
\begin{enumerate}[\normalfont\rm(1)]
\item There exists $g\in H^1$ such that $s_n(g)\not\to g$ in $(H^1,w)$.
\item There exists $f\in \vmoa$ such that $s_n(f)\not\to f$ in $(\vmoa,w)$.
\end{enumerate}
\end{theorem}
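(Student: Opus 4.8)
Theorem~\ref{T:BMOAdiv} asserts the failure of norm-one Ces\`aro summability (that is, the ordinary partial sums $s_n=\sigma_n^0$) in two related spaces. The plan is to reduce both statements to a single quantitative obstruction, namely the unboundedness of the partial-sum operators on one of the spaces involved, and then to transport this obstruction across the duality provided by Theorem~\ref{T:BMOA}.

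First I would observe that the cleanest route is to exploit the equivalence of statements (i)--(v) in Theorem~\ref{T:BMOA}, applied with the matrix $A$ corresponding to the ordinary partial sums, so that $S_n^A=s_n$. Under this identification, statement~(i) of Theorem~\ref{T:BMOA} is exactly the negation of part~(1) of the theorem to be proved, and statement~(iv) is exactly the negation of part~(2). Since (i)--(v) are all equivalent, it suffices to show that \emph{any one} of them fails; the failure of that one then forces the failure of all, and in particular both (i) and (iv) fail, giving both parts simultaneously. Thus the two parts are not really independent: once I know $s_n$ does not converge in the appropriate sense for one of the five formulations, I get both conclusions for free. I would therefore look for the formulation in which the failure is easiest to exhibit concretely.

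The natural candidate is statement~(ii), norm convergence in $H^1$. The key step is to show that $\sup_n\|s_n\|_{H^1\to H^1}=\infty$, which by the uniform boundedness principle (used as in step~3 of the proof of Lemma~\ref{L:weak-norm}, or in Theorem~\ref{T:limitation}) rules out norm convergence $s_n(g)\to g$ for all $g\in H^1$. The most economical way to get this is via the limitation theorem: by Theorem~\ref{T:cesarolimitation} (or directly Theorem~\ref{T:limitation} with the explicit inverse of the $\alpha=0$ Ces\`aro matrix from Theorem~\ref{T:inversebound}), if $s_n(g)\to g$ for all $g$ then $\|e_j\|\|\psi_j\|/|\langle e_j,\psi_j\rangle|=O(1)$. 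Here $e_j=z^j\in H^1$ and $\psi_j=z^j\in\bmoa$, with $\|e_j\|_{H^1}=1$ and $\langle e_j,\psi_j\rangle=1$, so the quantity reduces to $\|z^j\|_{\bmoa}$. It therefore remains to note that $\|z^j\|_{\bmoa}\asymp1$ does \emph{not} contradict boundedness directly, so the limitation theorem alone is too weak; the genuine obstruction must come from the norm of $s_n$ itself rather than from the size of the basis vectors. I would instead argue that $s_n$ on $H^1$ is, up to the duality, the Riesz projection composed with a Dirichlet-type truncation, and that $\|s_n\|_{H^1\to H^1}\to\infty$ because the conjugate-function (Riesz projection) operator is unbounded on $L^1$, the classical fact underlying the failure of the Dirichlet kernel estimate $\|D_n\|_1\asymp\log n$ already invoked in the proof of Theorem~\ref{T:Fourierdiv}.

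The main obstacle, and the step demanding real care, is pinning down exactly which quantitative fact drives the divergence and in which space it is cleanest. The honest and shortest path is almost certainly to transfer the known Fourier-series divergence: by Theorem~\ref{T:Fourierdiv}(3) there is $g\in L^1(\TT)$ whose partial sums fail to converge weak* in $M(\TT)$, and the Riesz projection lets one pass from $L^1(\TT)$ to $H^1$ while preserving the failure of partial-sum convergence, thereby establishing the failure of statement~(ii) of Theorem~\ref{T:BMOA}. Once that single failure is in hand, the equivalence in Theorem~\ref{T:BMOA} delivers the failure of (i) and of (iv), which are precisely parts~(1) and~(2) of Theorem~\ref{T:BMOAdiv}. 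The delicate point to verify is that the Riesz projection does not accidentally restore convergence---i.e.\ that the analytic truncation genuinely inherits the $L^1$ pathology---and this is where I would spend the bulk of the argument, appealing to the standard unboundedness of $s_n$ on $H^1$ rather than attempting an explicit extremal function.
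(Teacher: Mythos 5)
Your proposal is correct and follows essentially the same route as the paper: the authors likewise establish the failure of norm convergence in $H^1$ (citing the well-known fact that $\|s_n(g)-g\|_{H^1}\not\to0$ for some $g\in H^1$, a consequence of the unboundedness of the Riesz projection $P_+:L^1(\TT)\to H^1$) and then obtain both parts at once from the equivalences of Theorem~\ref{T:BMOA}. The detour through the limitation theorem in your write-up is correctly abandoned, and the remainder matches the paper's argument.
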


\begin{proof}
It is well known that there exists $g\in H^1$ such that $\|s_n(g)-g\|_{H1}\not\to0$.
This follows easily from the fact that the Riesz projection $P_+:L^1(\TT)\to H^1$ is unbounded
(see e.g.\ \cite[Ch.~III, \S1]{Ga07}).
Parts~(1) and (2) both follow by applying the equivalences in Theorem~\ref{T:BMOA}.
\end{proof}

\begin{remark}
Using much the same idea, 
Zhu has previously shown that there exists $f\in \vmoa$ such that $\|s_n(f)-f\|_{\bmoa}\not\to0$ (see \cite[Corollary~5]{Zh91}).
\end{remark}

\begin{theorem}\label{T:BMOAconv}
Let $\alpha>0$.
\begin{enumerate}[\normalfont\rm(1)]
\item $\|\sigma_n^\alpha(g)-g\|_{H^1}\to0$ for all $g\in H^1$.
\item $\sigma_n^\alpha(h)\to h$ in $(\bmoa,w^*)$ for all $h\in \bmoa$.
\item $\|\sigma_n^\alpha(f)-f\|_{\bmoa}\to0$ for all $f\in\vmoa$.
\end{enumerate}
\end{theorem}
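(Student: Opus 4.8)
The plan is to deduce all three parts from the abstract machinery already in place, specifically Theorem~\ref{T:BMOA} together with a single concrete convergence fact. The key observation is that Theorem~\ref{T:BMOA} shows statements (i)--(v) are all equivalent for any admissible matrix $A$, and the Ces\`aro means $\sigma_n^\alpha$ correspond to the Ces\`aro matrix \eqref{E:cesaromatrix}, which satisfies the summability condition \eqref{E:convcond} since $\|P_k\|=\|e_k\|\|\psi_k\|/|\langle e_k,\psi_k\rangle|\asymp1$ here. Thus, once I establish \emph{any one} of the five equivalent statements for this particular $A$, the other four follow automatically. In particular, part~(1) is precisely statement (ii) of Theorem~\ref{T:BMOA}, part~(2) follows from statement (v) (which gives norm convergence in $\vmoa$, a fortiori weak convergence), and part~(3) is exactly statement (iii).

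First I would prove part~(1) directly as the anchor. The natural route is to invoke the known boundedness of the Ces\`aro operators $\sigma_n^\alpha$ on $H^1$ for $\alpha>0$: one shows $\sup_n\|\sigma_n^\alpha\|_{H^1\to H^1}<\infty$, and since $\sigma_n^\alpha(p)\to p$ in $H^1$ for every polynomial $p$ (as the Ces\`aro means of a polynomial are eventually the polynomial itself, or converge coefficientwise with controlled tails), a standard density-plus-uniform-boundedness argument yields $\|\sigma_n^\alpha(g)-g\|_{H^1}\to0$ for all $g\in H^1$, using that polynomials are dense in $H^1$. The uniform boundedness is the classical fact that the fractional Ces\`aro kernels $(1-z)^{-\alpha-1}$-type averages behave well for $\alpha>0$; this is where any real analytic content resides, but it is entirely standard for the Fej\'er-type ($\alpha\ge1$) case and known in general for $\alpha>0$.

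Having secured part~(1), the remaining parts require essentially no further work: I would simply cite Theorem~\ref{T:BMOA}, noting that part~(1) is statement~(ii), hence statements (iii) and (v) also hold, which are respectively part~(3) and (the norm version underlying) part~(2). For part~(2), statement~(v) gives $\|\sigma_n^\alpha(f)-f\|_{\bmoa}\to0$ for all $f\in\vmoa$, which is even stronger than the weak convergence one might initially aim for.

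The main obstacle I anticipate is establishing the uniform $H^1$-boundedness of $\sigma_n^\alpha$ for the full range $\alpha>0$, rather than just $\alpha\ge1$. For $\alpha\ge1$ this is immediate from the positivity of the relevant kernels (the Fej\'er kernel being the $\alpha=1$ case), but for $0<\alpha<1$ the kernels are no longer positive and one must appeal to a more delicate estimate on the fractional Ces\`aro kernels. In writing the proof I would most likely shortcut this by citing the classical result of M.~Riesz (already invoked as Theorem~\ref{T:Fourierconv}(1) for $C(\TT)$) in its $H^1$ form, or by reducing to the circle case via the Riesz projection together with the observation that $\sigma_n^\alpha$ commutes with $P_+$; the point is that part~(1) is a known classical theorem, and the novelty of the present approach lies precisely in how cheaply parts (2) and (3) then follow from it through Theorem~\ref{T:BMOA}.
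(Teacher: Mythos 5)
Your proposal matches the paper's proof: part (1) is taken as the classical anchor (the paper simply cites Hardy's 1913 theorem on Ces\`aro summability in $H^1$), and parts (2) and (3) then follow at once from the equivalences of Theorem~\ref{T:BMOA}. One immaterial slip: part (2) is statement (iii) and part (3) is statement (v) of Theorem~\ref{T:BMOA}, not the other way around as you wrote, but since the theorem asserts the equivalence of all five statements this does not affect the argument.
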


\begin{proof}
Part~(1) is a classical result of Hardy \cite{Ha13}.
Parts~(2) and (3) follow, using Theorem~\ref{T:BMOA}.
\end{proof}


\subsection{Bergman and Bloch spaces}\label{S:Bergman}

Our development parallels that in \S\ref{S:Hardy}.
Once again, we begin by reviewing the definitions and some basic facts about these spaces. 
The details are in \cite[Chapters~4 and~5]{Zh07}.

For $1\le p<\infty$, the Bergman space $A^p$ is defined as the space of $f\in\hol(\DD)$ such that
\[
\|f\|_{A^p}:=\Bigl(\frac{1}{\pi}\int_\DD|f(z)|^p\,dA(z)\Bigr)^{1/p}<\infty,
\]
where $dA$ denotes area measure on $\DD$.

The Bloch space $\cB$ consists of those $f\in\hol(\DD)$ such that
\[
\|f\|_{\cB}:=|f(0)|+\sup_{\DD}(1-|z|^2)|f'(z)|<\infty.
\]
The little Bloch space $\cB_0$ 
 is the closed subspace of $\cB$ consisting of those functions $f\in\cB$
such that $(1-|z|^2)|f'(z)|\to0$ as $|z|\to1$.

All the spaces $A^p,\cB,\cB_0$ are Banach spaces and contain the polynomials.
Polynomials are dense in $A^p~(1\le p<\infty)$ and in $\cB_0$, but not in $\cB$,
since the latter is not separable.

We have the following identifications of  dual spaces (up to isomorphism):  $(\cB_0)^*\cong A^1$ and $(A^1)^*\cong\cB$.
The pairings are given by
\begin{align*}
\langle f,g\rangle&:=\lim_{r\to1^-}\frac{1}{\pi}\int_{|z|<r} f(z)g(\overline{z})\,dA(z)
\quad(f\in \cB_0,~g\in A^1),\\
\langle g,h\rangle&:=\lim_{r\to1^-}\frac{1}{\pi}\int_{|z|<r}g(z)h(\overline{z})\,dA(z)
\quad(g\in A^1,~h\in\cB).
\end{align*}

Once again, we apply the theory developed in \S\ref{S:summability}.
Let $X:=A^1$, let $e_k:=z^k\in A^1$ and let $\psi_k:=z^k\in \cB$.
Then $\|e_k\|_{A^1}\asymp1/(k+1)$ and $\|\psi_k\|_\cB\asymp1$,
and $|\langle e_k,\psi_k\rangle|\asymp1/(k+1)$,
where the implied constants are independent of $k$. 

Let $(a_{nk})_{n,k\ge0}$ be a matrix of complex scalars such that $\sum_k|a_{nk}|<\infty$ for each~$n$.
Then
\begin{align*}
S_n^A(g)&=\sum_{k\ge0}a_{nk}\frac{\langle g,\psi_k\rangle}{\langle e_k,\psi_k\rangle}e_k=\sum_{k\ge0}a_{nk}\hat{g}(k)z^k
\quad(g\in A^1),\\
(S_n^A)^*(h)&=\sum_{k\ge0}a_{nk}\frac{\langle e_k,h\rangle}{\langle e_k,\psi_k\rangle}\psi_k=\sum_{k\ge0}a_{nk}\hat{h}(k)z^k \quad(h\in \cB).
\end{align*}

It is known that $\|\sigma_n(g)-g\|_{A^1}\to0$ for all $g\in A^1$ (see Theorem~\ref{T:Blochconv}\,(i) below).
Theorem~\ref{T:equivalence} therefore applies.
In the notation of Theorem~\ref{T:equivalence}, 
$Z$ is the norm-closure in $\cB$ of $\spn\{z^k: k\ge0\}$,
which is exactly $\cB_0$. 
We thus obtain the following result.

\begin{theorem}\label{T:Bloch}
Let $(a_{nk})_{n,k\ge0}$ be an infinite matrix of complex scalars such that $\sum_k|a_{nk}|<\infty$ for each~$n$.
Then the following statements are equivalent:
\begin{enumerate}[\normalfont\rm(i)]
\item $\sum_{k\ge0}a_{nk}\hat{g}(k)z^k\to g$ in $(A^1,w)$, for all $g\in A^1$;
\item $\sum_{k\ge0}a_{nk}\hat{g}(k)z^k\to g$ in $(A^1,\|\cdot\|_{A^1})$, for all $g\in A^1$;
\item $\sum_{k\ge0} a_{nk}\hat{h}(k)z^k\to h$ in $(\cB,w^*)$, for all $h\in \cB$;
\item $\sum_{k\ge0} a_{nk}\hat{f}(k)z^k\to f$ in $(\cB_0,w)$, for all $f\in \cB_0$;
\item $\sum_{k\ge0}a_{nk}\hat{f}(k)z^k\to f$  in $(\cB_0,\|\cdot\|_\cB)$, for all $f\in \cB_0$.
\end{enumerate}
\end{theorem}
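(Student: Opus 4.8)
The plan is to recognize Theorem~\ref{T:Bloch} as a direct application of the abstract equivalence Theorem~\ref{T:equivalence}, once the concrete data $X=A^1$, $e_k=z^k\in A^1$, $\psi_k=z^k\in\cB$ are shown to fit the set-up of \S\ref{S:setuo}. First I would verify the biorthogonality relations. Using the pairing $\langle g,h\rangle=\lim_{r\to1^-}\frac{1}{\pi}\int_{|z|<r}g(z)h(\bar z)\,dA(z)$ and passing to polar coordinates $z=\rho e^{i\theta}$, the integrand $z^j(\bar z)^k=\rho^{j+k}e^{i(j-k)\theta}$ integrates to zero over $\theta$ unless $j=k$; this gives $\langle e_j,\psi_k\rangle=0$ for $j\ne k$ and $\langle e_k,\psi_k\rangle>0$, as required.

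Next I would check the convergence condition \eqref{E:convcond}. From the stated asymptotics $\|e_k\|_{A^1}\asymp 1/(k+1)$, $\|\psi_k\|_\cB\asymp1$ and $|\langle e_k,\psi_k\rangle|\asymp1/(k+1)$, the rank-one projection norm satisfies $\|P_k\|=\|e_k\|_{A^1}\|\psi_k\|_\cB/|\langle e_k,\psi_k\rangle|\asymp1$. Hence \eqref{E:convcond} reduces to $\sum_k|a_{nk}|<\infty$, which is precisely the hypothesis of the theorem. The explicit formulas for $S_n^A$ and $(S_n^A)^*$ recorded just above the statement then display the summation operators in the form needed.

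Two further inputs unlock the full force of Theorem~\ref{T:equivalence}. One is the existence of a single matrix $A^0$ for which $S_n^{A^0}g\to g$ for all $g\in A^1$; for this I would take the Ces\`aro mean $\sigma_n$, invoking the norm convergence $\|\sigma_n(g)-g\|_{A^1}\to0$ stated in Theorem~\ref{T:Blochconv}\,(i) below. The other is the identification of the space $Z$: since $Z$ is the norm-closure in $\cB$ of $\spn\{z^k:k\ge0\}$, and the Bloch-norm closure of the polynomials is exactly the little Bloch space $\cB_0$, we have $Z=\cB_0$. With these in hand, Theorem~\ref{T:equivalence} applies, and translating its abstract statements (i)--(v) through the formulas for $S_n^A$ and $(S_n^A)^*$ yields the five equivalent conditions of the theorem.

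The main obstacle is not in the abstract machinery, which does all the real work, but in the two space-specific facts it consumes: the Ces\`aro convergence in $A^1$ supplying $A^0$, and the identification $Z=\cB_0$. Both are standard facts about Bergman and Bloch spaces — the former being the forthcoming Theorem~\ref{T:Blochconv}\,(i), the latter the classical description of $\cB_0$ as the polynomial closure — so beyond citing them the proof is a routine matching of notation against Theorem~\ref{T:equivalence}.
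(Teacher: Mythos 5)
Your proposal is correct and follows essentially the same route as the paper: verify that $X=A^1$, $e_k=z^k$, $\psi_k=z^k\in\cB$ satisfy the biorthogonality and the condition \eqref{E:convcond} (which reduces to $\sum_k|a_{nk}|<\infty$ since $\|P_k\|\asymp1$), supply $A^0$ via the Ces\`aro convergence $\|\sigma_n(g)-g\|_{A^1}\to0$ from Theorem~\ref{T:Blochconv}\,(1) (itself derived from the $H^1$ result via Theorem~\ref{T:HpAp}, so there is no circularity), identify $Z=\cB_0$ as the Bloch-norm closure of the polynomials, and invoke Theorem~\ref{T:equivalence}. No gaps.
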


\begin{remark}
Explicitly, statement (iii) means that, for all $h\in \cB$ and all  $g\in A^1$,
\[
\lim_{n\to\infty}\lim_{r\to1^-}\sum_{k\ge0}a_{nk}r^k\hat{g}(k)\hat{h}(k)=
\lim_{r\to1^-}\frac{1}{\pi}\int_{|z|<r} g(z)h(\overline{z})\,dA(z).
\]
\end{remark}


\subsection{Relationship between Hardy and Bergman spaces}\label{S:relationship}

The following result describes the relationship between summability in $H^p$ and summability in $A^p$.

\begin{theorem}\label{T:HpAp}
Let $1\le p<\infty$.
Let $A:=(a_{nk})_{n,k\ge0}$ be an infinite matrix of complex scalars such that $\sum_k|a_{nk}|<\infty$ for each~$n$.
If $\|S_n^A(f)-f\|_{H^p}\to0$ for all $f\in H^p$, then $\|S_n^A(g)-g\|_{A^p}\to0$ for all $g\in A^p$.
\end{theorem}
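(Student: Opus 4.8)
The plan is to relate the Bergman norm to an average of Hardy norms over dilations and then exploit the fact that the summation operator $S_n^A$ commutes with dilation. For $F\in\hol(\DD)$ and $0<r<1$, write $F_r(z):=F(rz)$. Since $F_r$ extends holomorphically across $\overline\DD$, its $H^p$ norm is just the $L^p$ norm of its boundary values, so $\|F_r\|_{H^p}^p=\frac{1}{2\pi}\int_0^{2\pi}|F(re^{i\theta})|^p\,d\theta$. Passing to polar coordinates in the definition of the Bergman norm and applying Tonelli's theorem then gives the identity
\[
\|F\|_{A^p}^p=2\int_0^1 r\,\|F_r\|_{H^p}^p\,dr,
\]
valid in $[0,\infty]$ for every $F\in\hol(\DD)$. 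The crucial algebraic observation is that $S_n^A$ acts diagonally on Taylor coefficients, and so does dilation; hence the two commute, giving $(S_n^AF)_r=S_n^A(F_r)$ for all $r$.

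Next I would record two boundedness facts on $H^p$. First, each $S_n^A$ is bounded on $H^p$: the coefficient projection $P_k\colon f\mapsto\hat f(k)z^k$ satisfies $\|P_k\|_{H^p\to H^p}\le1$ (because $|\hat f(k)|\le\|f\|_{H^1}\le\|f\|_{H^p}$ and $\|z^k\|_{H^p}=1$), whence $\|S_n^A\|_{H^p\to H^p}\le\sum_k|a_{nk}|<\infty$. Second, since $\|S_n^A(f)-f\|_{H^p}\to0$ for every $f\in H^p$, the orbits $(S_n^Af)_n$ are bounded, so the Banach--Steinhaus theorem yields $M:=\sup_n\|S_n^A\|_{H^p\to H^p}<\infty$.

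Now fix $g\in A^p$. The series defining $S_n^A(g)=\sum_k a_{nk}\hat g(k)z^k$ converges locally uniformly on $\DD$ (coefficient functionals on $A^p$ grow only polynomially), so $S_n^A(g)\in\hol(\DD)$ and the integral identity applies to $F:=S_n^A(g)-g$. Using the commutation relation, this gives
\[
\|S_n^A(g)-g\|_{A^p}^p=2\int_0^1 r\,\|S_n^A(g_r)-g_r\|_{H^p}^p\,dr.
\]
For each fixed $r<1$ the dilation $g_r$ lies in $H^\infty\subset H^p$, so by hypothesis the integrand tends to $0$ as $n\to\infty$; moreover it is dominated, uniformly in $n$, by $r\,(M+1)^p\|g_r\|_{H^p}^p$, whose integral over $(0,1)$ equals $\tfrac12(M+1)^p\|g\|_{A^p}^p<\infty$ by the same identity applied to $g$. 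The dominated convergence theorem then forces the right-hand side to $0$, which is exactly the desired conclusion (and shows in passing that $S_n^A(g)\in A^p$ for large $n$).

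The one step needing genuine care is the integral identity together with the claim $\|F_r\|_{H^p}^p=\frac{1}{2\pi}\int_0^{2\pi}|F(re^{i\theta})|^p\,d\theta$; this rests on the monotonicity of the $L^p$ integral means of the subharmonic function $|F|^p$ and on the continuity of $F_r$ on $\overline\DD$. Everything else — the commutation relation, the uniform bound, and the dominated-convergence argument — is routine, and the integrable majorant is supplied precisely by the hypothesis $g\in A^p$. The heart of the matter is the observation that diagonal (Taylor-coefficient) operators commute with dilation, which reduces the Bergman statement to the Hardy statement fibrewise in $r$.
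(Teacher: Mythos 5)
Your proof is correct. It shares the paper's two key ingredients --- the polar-coordinate identity $\|F\|_{A^p}^p=2\int_0^1 r\,\|F_r\|_{H^p}^p\,dr$ and the observation that $S_n^A$, being diagonal on Taylor coefficients, commutes with dilation --- as well as the Banach--Steinhaus step, but the endgame is genuinely different. The paper isolates these facts in a lemma and uses them to prove (a) $\|\cdot\|_{A^p}\le\|\cdot\|_{H^p}$, so the hypothesis already gives $A^p$-convergence for every $f\in H^p$, and (b) $\|S_n^A\|_{A^p\to A^p}\le\|S_n^A\|_{H^p\to H^p}$, so the $A^p$-operator norms are uniformly bounded; it then concludes by the standard density argument, using that $H^p$ (indeed the polynomials) is dense in $A^p$. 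You instead keep the norm identity intact, apply the hypothesis fibrewise to the dilates $g_r\in H^\infty\subset H^p$, and close with dominated convergence, the majorant $r\,(M+1)^p\|g_r\|_{H^p}^p$ being integrable precisely because $g\in A^p$. What your route buys is that no density statement about $A^p$ is needed (the approximants $g_r$ are built in), and you get $S_n^A(g)\in A^p$ for free from the finiteness of the majorized integral; what the paper's route buys is a reusable quantitative lemma (the transfer of operator norms from $H^p$ to $A^p$) and a shorter conclusion. Both are sound; your appeal to monotonicity of the integral means of $|F|^p$ to identify $\|F_r\|_{H^p}$ with the $L^p$ mean on the circle of radius $r$ is exactly the point that needs (and gets) justification in either version.
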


For the proof, we need a  lemma.

\begin{lemma}
Let $1\le p<\infty$.
\begin{enumerate}[\normalfont\rm(1)]
\item We have $H^p\subset A^p$, and $\|f\|_{A^p}\le \|f\|_{H^p}$ for all $f\in H^p$.
\item Let $S:\hol(\DD)\to\hol(\DD)$ be a linear map such that  $(Sf)_r=S(f_r)$ for all $f\in \hol(\DD)$ and all $r\in(0,1)$.
If $\|Sf\|_{H^p}\le C\|f\|_{H^p}$ for all $f\in H^p$, then $\|Sg\|_{A^p}\le C\|g\|_{A^p}$ for all $g\in A^p$.
\end{enumerate}
\end{lemma}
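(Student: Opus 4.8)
The plan is to express both norms through the circular integral means
\[
M_p(h,\rho):=\Bigl(\frac1{2\pi}\int_0^{2\pi}|h(\rho e^{i\theta})|^p\,d\theta\Bigr)^{1/p}
\qquad(0\le\rho\le1),
\]
and to transfer the given $H^p$-estimate to $A^p$ one radius at a time. For part~(1), passing to polar coordinates $z=\rho e^{i\theta}$ (so that $dA(z)=\rho\,d\rho\,d\theta$) gives the identity
\[
\|h\|_{A^p}^p=\frac1\pi\int_0^1\!\!\int_0^{2\pi}|h(\rho e^{i\theta})|^p\,\rho\,d\theta\,d\rho
=2\int_0^1 M_p(h,\rho)^p\,\rho\,d\rho,
\]
valid for every $h\in\hol(\DD)$. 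Since $\|f\|_{H^p}=\sup_{\rho<1}M_p(f,\rho)$, we have $M_p(f,\rho)\le\|f\|_{H^p}$ for all $\rho<1$; inserting this bound into the identity yields $\|f\|_{A^p}^p\le 2\int_0^1\|f\|_{H^p}^p\rho\,d\rho=\|f\|_{H^p}^p$, which is part~(1).

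For part~(2), I would exploit the dilations $g_\rho(z):=g(\rho z)$. The crucial point is that, for $g\in A^p\subset\hol(\DD)$ and $\rho\in(0,1)$, the function $g_\rho$ is holomorphic on the disk of radius $1/\rho>1$, hence continuous on $\overline\DD$ and so belongs to $H^p$; moreover, for any such function holomorphic past the boundary the $H^p$-norm equals its boundary integral mean, so $\|g_\rho\|_{H^p}=M_p(g,\rho)$. The same applies to $(Sg)_\rho$, and here the hypothesis $(Sf)_r=S(f_r)$ enters: it gives
\[
M_p(Sg,\rho)=\|(Sg)_\rho\|_{H^p}=\|S(g_\rho)\|_{H^p}.
\]
Now the boundedness assumption may be applied to the genuine $H^p$-function $g_\rho$, yielding $\|S(g_\rho)\|_{H^p}\le C\|g_\rho\|_{H^p}=C\,M_p(g,\rho)$. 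Hence $M_p(Sg,\rho)\le C\,M_p(g,\rho)$ for every $\rho\in(0,1)$, and substituting into the $A^p$-identity from part~(1) gives
\[
\|Sg\|_{A^p}^p=2\int_0^1 M_p(Sg,\rho)^p\,\rho\,d\rho\le C^p\cdot2\int_0^1 M_p(g,\rho)^p\,\rho\,d\rho=C^p\|g\|_{A^p}^p,
\]
as required.

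I expect the one delicate step to be the chain $M_p(Sg,\rho)=\|(Sg)_\rho\|_{H^p}=\|S(g_\rho)\|_{H^p}$. The first equality rests on the standard fact that a function holomorphic in a neighbourhood of $\overline\DD$ has $H^p$-norm equal to the $L^p$-norm of its boundary values (a consequence of the monotonicity of $M_p(h,\cdot)$ for holomorphic $h$), applied to the dilate $(Sg)_\rho$, whose boundary mean at radius $1$ is precisely the circular mean of $Sg$ at radius $\rho$. The second equality is where the dilation-commutation hypothesis is indispensable: it lets us replace the awkward object $(Sg)_\rho$ by $S(g_\rho)$, so that the $H^p$-bound is invoked on $g_\rho$, which genuinely lies in $H^p$, rather than on $g$ itself, which need not. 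Everything else is routine manipulation of integral means.
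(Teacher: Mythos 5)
Your proof is correct and follows essentially the same route as the paper: both rest on the polar-coordinate identity $\|h\|_{A^p}^p=2\int_0^1 M_p(h,\rho)^p\,\rho\,d\rho=\int_0^1\|h_\rho\|_{H^p}^p\,2\rho\,d\rho$, with the dilation-commutation hypothesis used to replace $(Sg)_\rho$ by $S(g_\rho)$ so that the $H^p$-bound applies radius by radius. Your explicit justification that $\|g_\rho\|_{H^p}=M_p(g,\rho)$ via monotonicity of integral means is a detail the paper leaves implicit, but the argument is the same.
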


\begin{proof}
(1) By Fubini's theorem, we have  the identity
\[
\|f\|_{A^p}^p=\int_0^1\|f_r\|_{H^p}^p\,2r\,dr.
\]
As $\|f_r\|_{H^p}\le \|f\|_{H_p}~\forall r\in(0,1)$, 
we obtain $\|f\|_{A^p}^p\le \int_0^1\|f\|_{H^p}^p\,2r\,dr=\|f\|_{H^p}^p$.

(2) Again by the above identity, if  $f\in A^p$, then
\begin{align*}
\|Sf\|_{A^p}^p
&=\int_0^1 \|(Sf)_r\|_{H^p}^p\,2r\,dr=\int_0^1 \|S(f_r)\|_{H^p}^p\,2r\,dr\\
&\le C^p\int_0^1\|f_r\|_{H^p}^p\,2r\,dr=C^p\|f\|_{A^p}^p.\qedhere
\end{align*}
\end{proof}

\begin{remark}
In fact a result of Hardy and Littlewood shows that we even have $H^p\subset A^{2p}$. For a simple proof of this, and
an example showing that $2p$ is sharp, see the article of Vukoti\'c \cite{Vu03}. However, we do not need this here.
\end{remark}

\begin{proof}[Proof of Theorem~\ref{T:HpAp}]
Assume that $\|S_n^A(f)-f\|_{H^p}\to0$ for all $f\in H^p$.
By part~(1) of the lemma, $\|S_n^A(f)-f\|_{A^p}\to0$ for all $f\in H^p$.
By part~(2) of the lemma, $\|S_n^A:A^p\to A^p\|\le\|S_n^A:H^p\to H^p\|$ for all $n$,
and by the Banach--Steinhaus theorem, $\sup_n\|S_n^A:H^p\to H^p\|<\infty$.
The standard density argument now gives that  $\|S_n^A(g)-g\|_{A^p}\to0$ for all $g\in A^p$.
\end{proof}

Once again, we finish the section by applying the above work to classical Ces\`aro means. 

\begin{theorem}\label{T:Blochdiv}
\begin{enumerate}[\normalfont\rm(1)]
\item If $1<p<\infty$, then $\|s_n(g)-g\|_{A^p}\to0$.
\item There exists $g\in A^1$ such that $s_n(g)\not\to g$ in $(A^1,w)$.
\item There exists $f\in \cB_0$ such that $s_n(f)\not\to f$ in $(\cB_0,w)$.
\end{enumerate}
\end{theorem}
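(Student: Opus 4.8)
The plan is to treat each of the three parts separately, exploiting the equivalences from Theorem~\ref{T:Bloch} together with the transfer result Theorem~\ref{T:HpAp}.

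For part~(1), when $1<p<\infty$, the cleanest route is to invoke Theorem~\ref{T:HpAp} with the matrix $A$ being the Ces\`aro matrix of order~$0$ (so that $S_n^A=s_n$). It is classical that the partial-sum operators $s_n$ converge in norm on $H^p$ precisely when $1<p<\infty$, since this is equivalent to the boundedness of the Riesz projection on $L^p(\TT)$, which holds by the M.~Riesz theorem in exactly that range. Thus $\|s_n(f)-f\|_{H^p}\to0$ for all $f\in H^p$, and Theorem~\ref{T:HpAp} immediately yields $\|s_n(g)-g\|_{A^p}\to0$ for all $g\in A^p$.

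For parts~(2) and~(3), I would apply Theorem~\ref{T:Bloch} with the Ces\`aro matrix of order~$0$, so that the summation operators are exactly the partial-sum operators $s_n$ acting on $A^1$ and their adjoints $s_n^*$ acting on $\cB$. By the equivalence of statements (ii) and (iv) in Theorem~\ref{T:Bloch}, the following three assertions stand or fall together: norm convergence of $s_n$ on $A^1$, weak convergence of $s_n$ on $A^1$, and weak convergence of $s_n$ on $\cB_0$. Consequently, it suffices to produce a single failure of norm convergence on $A^1$, and parts~(2) and~(3) then follow at once. The natural source of such a failure is again the unboundedness of the Riesz projection at the endpoint $p=1$: since $P_+:L^1(\TT)\to H^1$ is unbounded, the operators $s_n$ are not uniformly bounded on $H^1$, and hence there is $h\in H^1$ with $\|s_n(h)-h\|_{H^1}\not\to0$. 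Because $H^1\subset A^1$ with $\|\cdot\|_{A^1}\le\|\cdot\|_{H^1}$ by part~(1) of the preceding lemma, one cannot directly transfer the \emph{failure} downward, so instead I would argue that $\sup_n\|s_n:A^1\to A^1\|=\infty$ directly; the simplest way is to test against the monomials $e_k=z^k$ and track the operator norms, or to observe that by Theorem~\ref{T:HpAp} boundedness on $A^1$ would be implied by, but is in fact equivalent in the relevant sense to, the behaviour on $H^1$.

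The main obstacle will be establishing that $s_n$ fails to converge in norm on $A^1$, rather than merely on $H^1$. The transfer theorem runs in only one direction (convergence on $H^p$ implies convergence on $A^p$), so the $H^1$ divergence does not formally descend. I expect the right approach is to compute or estimate the $A^1\to A^1$ norm of $s_n$ directly. Since $s_n$ acts diagonally on Taylor coefficients (truncation to degree $n$), its norm on $A^1$ is governed by the $A^1$-behaviour of the associated convolution kernel, and one can show $\sup_n\|s_n\|_{A^1\to A^1}=\infty$ by a Dirichlet-kernel-type estimate adapted to the Bergman weight. Once uniform boundedness fails, the Banach--Steinhaus theorem furnishes some $g\in A^1$ with $s_n(g)\not\to g$ in norm, and the equivalences in Theorem~\ref{T:Bloch} convert this into the weak statements claimed in parts~(2) and~(3).
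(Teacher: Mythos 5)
Your overall architecture coincides with the paper's: part (1) is obtained exactly as in the paper (M.~Riesz on $H^p$ for $1<p<\infty$ plus Theorem~\ref{T:HpAp}), and parts (2) and (3) are reduced, via the equivalences of Theorem~\ref{T:Bloch}, to producing a single $g\in A^1$ with $\|s_n(g)-g\|_{A^1}\not\to0$. You also correctly spot the one genuine subtlety, namely that Theorem~\ref{T:HpAp} transfers \emph{convergence} from $H^p$ down to $A^p$ but not \emph{divergence}, so the classical $H^1$ counterexample does not formally descend to $A^1$.

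The gap is that you never actually establish the $A^1$ divergence. The paper does not prove it either; it quotes Zhu \cite[Theorem~9]{Zh91} for the existence of $g\in A^1$ with $\|s_n(g)-g\|_{A^1}\not\to0$, and everything else follows. Of the two routes you sketch, the first --- ``test against the monomials $e_k=z^k$'' --- cannot work: $s_n(z^k)$ equals $z^k$ or $0$, so monomials only give the lower bound $\|s_n\|_{A^1\to A^1}\ge1$ and say nothing about unboundedness. The second --- that boundedness on $A^1$ is ``equivalent in the relevant sense'' to the behaviour on $H^1$ --- is asserted without justification and is not a consequence of anything in the paper. Your remaining suggestion (a Dirichlet-kernel-type estimate adapted to the Bergman weight, i.e.\ showing $\sup_n\|s_n\|_{A^1\to A^1}=\infty$ and invoking Banach--Steinhaus) is the right idea and is essentially how Zhu's result is proved, but as written it is a promissory note rather than an argument. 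Either carry out that kernel estimate (or, equivalently via Theorem~\ref{T:Bloch}\,(ii)$\iff$(v), show that the partial-sum operators are not uniformly bounded on the Bloch space, which is the Anderson--Clunie--Pommerenke/Zhu result mentioned in the paper's remark), or cite the known result as the paper does.
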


\begin{proof}
Part~(1) is a consequence of the (well-known) corresponding result in $H^p$ and Theorem~\ref{T:HpAp}.
By a result of Zhu \cite[Theorem~9]{Zh91}, there exists $g\in A^1$ such that $\|s_n(g)-g\|_{A^1}\not\to0$.
Parts (2) and (3) both follow by applying the equivalences in Theorem~\ref{T:Bloch}.
\end{proof}

\begin{remark}
Zhu has previously shown that there exists $f\in\cB_0$
such that $\|s_n(f)-f\|_{\cB}\not\to0$ (see \cite[Corollary~11]{Zh91}). 
The same result had also  been obtained earlier by Anderson, Clunie and Pommerenke \cite{ACP74}, but with a slightly different identification of the dual of $\cB_0$ and the predual of $\cB$.
\end{remark}

\begin{theorem}\label{T:Blochconv}
Let $\alpha>0$.
\begin{enumerate}[\normalfont\rm(1)]
\item $\|\sigma_n^\alpha(g)-g\|_{A^1}\to0$ for all $g\in A^1$.
\item $\sigma_n^\alpha(h)\to h$ in $(\cB,w^*)$ for all $h\in \cB$.
\item $\|\sigma_n^\alpha(f)-f\|_{\cB}\to0$ for all $f\in\cB_0$.
\end{enumerate}
\end{theorem}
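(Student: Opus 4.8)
The plan is to establish part~(1) first and then read off parts~(2) and~(3) from the master equivalence for the Bergman--Bloch pair, namely Theorem~\ref{T:Bloch}. Indeed, with the Ces\`aro matrix $A$ from \eqref{E:cesaromatrix} we have $\sigma_n^\alpha=S_n^A$, and part~(1) is precisely statement~(ii) of Theorem~\ref{T:Bloch}; once it is known, the equivalence of (ii), (iii) and (v) yields part~(2) (from~(iii)) and part~(3) (from~(v)) at no extra cost. So the whole burden of the proof falls on the $A^1$-convergence in part~(1). Before applying Theorem~\ref{T:Bloch} one should note that the Ces\`aro matrix is lower-triangular, so the convergence condition \eqref{E:convcond} holds trivially (each $\sum_k|a_{nk}|\|P_k\|$ is a finite sum), and the auxiliary matrix $A^0$ required to run the theorem is furnished by part~(1) itself.

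To prove part~(1), I would transfer the corresponding $H^1$ statement through Theorem~\ref{T:HpAp}. The required $H^1$ input is the classical theorem of Hardy \cite{Ha13}, recorded here as Theorem~\ref{T:BMOAconv}(1): for every $\alpha>0$ one has $\|\sigma_n^\alpha(g)-g\|_{H^1}\to0$ for all $g\in H^1$. To invoke Theorem~\ref{T:HpAp} with $p=1$, I need only verify its hypothesis that the operators commute with dilation. This is immediate: each $\sigma_n^\alpha$ is a Taylor-coefficient multiplier, $\sigma_n^\alpha(f)=\sum_k a_{nk}\hat{f}(k)z^k$, and since $\hat{f_r}(k)=r^k\hat{f}(k)$ one sees directly that $(\sigma_n^\alpha f)_r=\sigma_n^\alpha(f_r)$ for every $r\in(0,1)$. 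Theorem~\ref{T:HpAp} then promotes the $H^1$ convergence to $A^1$ convergence, giving part~(1).

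With part~(1) in hand, parts~(2) and~(3) follow mechanically. Since $\sigma_n^\alpha=S_n^A$ and part~(1) is exactly statement~(ii) of Theorem~\ref{T:Bloch}, the chain of equivalences there gives statement~(iii), which is part~(2), and statement~(v), which is part~(3). The proof has no serious obstacle: the only genuinely non-trivial ingredient is Hardy's classical $H^1$ result, which is quoted, while everything else is the abstract duality machinery of \S\ref{S:summability} together with the Hardy-to-Bergman comparison of Theorem~\ref{T:HpAp}. If one preferred not to cite Hardy's theorem for every $\alpha$, one could instead prove it for a single convenient value (say $\alpha=1$, via Fej\'er's theorem in $A^1$) and then use the inclusion of $\sigma_n^\alpha$-summability in $\sigma_n^\beta$-summability for $\alpha\le\beta$ noted before Theorem~\ref{T:cesarolimitation}; but since Theorem~\ref{T:BMOAconv}(1) already covers all $\alpha>0$, this refinement is unnecessary.
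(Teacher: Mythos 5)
Your proposal is correct and follows essentially the same route as the paper: part (1) is obtained from Hardy's $H^1$ theorem via Theorem~\ref{T:HpAp}, and parts (2) and (3) then follow from the equivalences in Theorem~\ref{T:Bloch}. The extra detail you supply (checking that $\sigma_n^\alpha$, being a coefficient multiplier, commutes with dilations) is exactly the hypothesis the paper's Theorem~\ref{T:HpAp} relies on, so nothing is missing.
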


\begin{proof}
Part~(1) follows from the corresponding result for $H^1$, together with Theorem~\ref{T:HpAp}.
Parts~(2) and (3) are consequences of (1), using the equivalences in Theorem~\ref{T:Bloch}.
\end{proof}


\section{Applications in Hilbert  spaces}\label{S:Hilbert}

\subsection{Abstract set-up}\label{S:Hsetup}

In the case of a Hilbert space, 
we can repeat the analysis of \S\ref{S:summability} using the inner product
in place of the duality pairing. The fact that an inner product is sesquilinear rather than bilinear
leads to some slight differences between the two theories.

Throughout this section, $H$ denotes a complex Hilbert space 
with inner product $\langle\cdot,\cdot\rangle$.
Also, if $T$ is a bounded linear operator on $H$, then
$T^*$ denotes the Hilbert-space adjoint of $T$,
namely the unique operator on $H$ such that
\[
\langle Tg,h\rangle=\langle g,T^*h\rangle \quad(g,h\in H).
\]

The following theorem is the analogue of Theorem~\ref{T:norm-norm}. 
It is proved in just the same way, so we omit the details. 
Of course, since a Hilbert space is reflexive,
there is no distinction between weak- and weak*-convergence.

\begin{theorem}\label{T:Hnorm-norm}
Let $(T_n)_{n\ge1}$ be a sequence of bounded, finite-rank operators on $H$
such that
\[
T_nT_m(H)\subset T_m(H) 
\quad\text{and}\quad
T_n^*T_m^*(H)\subset T_m^*(H)
\qquad(m,n\ge0).
\]
Then the  following statements are equivalent:
\begin{enumerate}[\normalfont(i)]
\item $T_n h\to h$ weakly for all $h\in H$;
\item $T_n h\to h$ in norm for all $h\in H$;
\item $T_n^* h\to h$ weakly for all $h\in H$;
\item $T_n^* h\to h$ in norm for all $h\in H$.
\end{enumerate}
\end{theorem}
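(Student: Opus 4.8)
The plan is to derive Theorem~\ref{T:Hnorm-norm} by reducing it to the already-established Theorem~\ref{T:norm-norm}, rather than re-running the entire chain of lemmas. The essential observation is that a Hilbert space $H$ is reflexive, so the full list of equivalences (i)--(vi) in Theorem~\ref{T:norm-norm} is available. Under reflexivity, the weak and weak* topologies coincide after the canonical identification of $H$ with $H^{**}$, so statements (iii) and (vi) of Theorem~\ref{T:norm-norm} collapse onto the two statements about $T_n^*$ in the present theorem. First I would set $X:=H$ and verify that the two invariance hypotheses $T_nT_m(H)\subset T_m(H)$ and $T_n^*T_m^*(H)\subset T_m^*(H)$ are exactly condition~\eqref{E:norm-norm}, so Theorem~\ref{T:norm-norm} applies verbatim.

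The one genuine subtlety is bookkeeping between the Banach-space adjoint used in \S\ref{S:operator} and the Hilbert-space adjoint used here. In Theorem~\ref{T:norm-norm} the dual $X^*$ and the adjoint $T^*$ are defined via the bilinear duality pairing $\langle x,T^*\phi\rangle=\langle Tx,\phi\rangle$, whereas in the Hilbert setting $T^*$ is the sesquilinear adjoint satisfying $\langle Tg,h\rangle=\langle g,T^*h\rangle$. The Riesz representation theorem furnishes a \emph{conjugate}-linear isometric isomorphism $J:H\to H^*$, and under this identification the Banach-space adjoint $T^{*_B}$ on $H^*$ corresponds to the Hilbert-space adjoint $T^{*_H}$ on $H$ via $J\circ T^{*_H}=T^{*_B}\circ J$. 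I would record this correspondence and note that, because $J$ is a conjugate-linear isometry, it carries norm-convergence to norm-convergence and weak* convergence on $H^*$ back to weak convergence on $H$. Consequently the four statements (i)--(iv) of the present theorem match precisely (i), (ii), (iii)/(vi), and (vi)/(ii)-via-reflexivity of Theorem~\ref{T:norm-norm}.

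I expect the conjugate-linearity of the Riesz map to be the main point requiring care: one must check that the invariance condition $T_n^{*_H}T_m^{*_H}(H)\subset T_m^{*_H}(H)$ really does transport to $T_n^{*_B}T_m^{*_B}(H^*)\subset T_m^{*_B}(H^*)$ under $J$, which it does because $J$ maps the subspace $T_m^{*_H}(H)$ isometrically onto $T_m^{*_B}(H^*)$. Once this dictionary is in place, the auxiliary subspaces $Y$ and $Z$ of Theorem~\ref{T:norm-norm} become the norm-closures of the spans of the ranges of $T_m$ and $T_m^{*_H}$; reflexivity guarantees that the conditions ``$Y=X$'' and ``$Z$ is $w^*$-sequentially dense'' are automatically subsumed, since in a reflexive space statement~(vi) is equivalent to the others without any side conditions. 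This is exactly why the Hilbert-space version has only four clean equivalences with no density hypotheses attached. Since the authors state that the proof is ``just the same'' and omit the details, the honest write-up is simply to invoke Theorem~\ref{T:norm-norm} through the Riesz identification, and that is the route I would take.
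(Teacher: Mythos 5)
Your proposal is correct, and the reduction goes through: the intertwining $J\circ T_n^{*_H}=T_n^{*_B}\circ J$ for the conjugate-linear Riesz isometry $J:H\to H^*$ does transport the invariance hypothesis on the Hilbert adjoints to condition \eqref{E:norm-norm} for the Banach adjoints, $J$ carries weak convergence in $H$ to weak* convergence in $H^*$ and norm convergence to norm convergence, and reflexivity of $H$ makes statement (vi) of Theorem~\ref{T:norm-norm} available, so your four statements match (i), (ii), (iii) and (vi) of that theorem and the density conditions in (iv)--(v) never need to be touched. This is, however, a genuinely different route from what the paper intends: the authors' remark that the result ``is proved in just the same way'' signals that they would re-run the chain of lemmas (Lemmas~\ref{L:weak-weak*}, \ref{L:weak-norm} and Corollary~\ref{C:weak-norm}) verbatim in the sesquilinear setting, rather than invoke the Banach-space theorem through the Riesz identification. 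Your transfer argument is arguably the cleaner write-up, since it produces an actual one-line deduction from an already-proved theorem at the cost of the (correctly handled) bookkeeping about conjugate-linearity; the paper's route avoids any identification of $H$ with $H^*$ but leaves the reader to redo the lemmas. Either is acceptable; the only point I would insist you make explicit in a final version is the verification that $J$ maps $T_m^{*_H}(H)$ onto $T_m^{*_B}(H^*)$ (which you do state) and that weak convergence in $H$ corresponds to weak* convergence in $H^*$ under a \emph{conjugate}-linear map, since that step is where a sign-of-conjugation error could silently creep in.
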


Now suppose that $(e_k)_{k\ge0}$ and $(f_k)_{k\ge0}$ are two sequences in  $H$  such that
\[
\left\{
\begin{aligned}
\langle e_j,f_k\rangle&=0, \quad\forall j,k,~j\ne k,\\
\langle e_k,f_k\rangle&\ne0,\quad\forall k.
\end{aligned}
\right.
\]
For each $k\ge0$, define $P_k:H\to H$ by
\[
P_k:=\frac{e_k\otimes f_k}{\langle e_k,f_k\rangle}.
\]
Explicitly,
\[
P_k(h):=\frac{\langle h,f_k\rangle}{\langle e_k,f_k\rangle}e_k
\quad(h\in H).
\]
Clearly $P_k(e_k)=e_k$ and $P_k(e_j)=0$ if $j\ne k$. 
It is easy to see that $P_k$ is a rank-one projection with
\[
\|P_k\|=\frac{\|e_k\|\|f_k\|}{|\langle e_k,f_k\rangle|}.
\]
Its adjoint $P_k^*:H\to H$ is given by
\[
P_k^*=\frac{f_k\otimes e_k}{\langle f_k,e_k\rangle}.
\]
Let $A=(a_{nk})_{n,k\ge0}$ be an infinite matrix of complex scalars such that, 
for each $n\ge0$,
\[
\sum_{k\ge 0} |a_{nk}|\|P_k\|<\infty.
\]
We define the associated summation operators $S_n^A:H\to H~(n\ge0)$
by the absolutely convergent series 
\[
S_n^A:=\sum_{k\ge0} a_{nk}P_k.
\]
Explicitly, we have
\begin{align*}
S_n^A(h)&=\sum_{k\ge 0} a_{nk}\frac{\langle h,f_k\rangle}{\langle e_k,f_k\rangle}e_k\quad(h\in H),\\
(S_n^A)^*(h)&=\sum_{k\ge 0} \overline{a}_{nk}\frac{\langle h,e_k\rangle}{\langle f_k,e_k\rangle}f_k
\quad(h\in H).
\end{align*}

With this notation established, we have the following equivalence theorem.
It follows from Theorem~\ref{T:Hnorm-norm} in just the same way that 
Theorem~\ref{T:equivalence} follows from Theorem~\ref{T:norm-norm}. 
We omit the details.

\begin{theorem}\label{T:Hequivalence}
The following statements are equivalent:
\begin{enumerate}[\normalfont\rm(i)]
\item $S_n^A(h)\to h$ weakly, for all $h\in H$.
\item $S_n^A(h)\to h$ in norm, for all $h\in H$.
\item $(S_n^A)^*(h)\to h$ weakly, for all $h\in H$.
\item $(S_n^A)^*(h)\to h$ in norm, for all $h\in H$.
\end{enumerate}
\end{theorem}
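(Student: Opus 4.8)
The plan is to deduce Theorem~\ref{T:Hequivalence} from the abstract operator-theoretic result Theorem~\ref{T:Hnorm-norm}, following exactly the scheme by which Theorem~\ref{T:equivalence} was derived from Theorem~\ref{T:norm-norm}. First I would reduce the summation operators $S_n^A$, which are given by possibly infinite absolutely convergent series, to genuine finite-rank operators. Using the convergence condition $\sum_{k\ge0}|a_{nk}|\|P_k\|<\infty$, for each $n$ I choose $K_n$ so that the tail $\sum_{k>K_n}|a_{nk}|\|P_k\|<2^{-n}$, and set $T_n:=\sum_{k=0}^{K_n}a_{nk}P_k$. Then $\|S_n^A-T_n\|<2^{-n}\to0$, so each of the four statements (i)--(iv) holds for $(S_n^A)$ if and only if it holds for $(T_n)$; this lets me work with the finite-rank operators $T_n$ throughout.

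Next I would verify the hypotheses of Theorem~\ref{T:Hnorm-norm} for the sequence $(T_n)$. Since the $P_k$ are rank-one projections built from the biorthogonal-type system $(e_k),(f_k)$, the ranges are
\[
T_n(H)=\spn\{e_k:0\le k\le K_n,\ a_{nk}\ne0\},\qquad
T_n^*(H)=\spn\{f_k:0\le k\le K_n,\ a_{nk}\ne0\}.
\]
Because $P_k(e_j)=\delta_{jk}e_k$ and the analogous relation holds for the adjoints, applying $T_n$ to a basis vector $e_k$ in the range of $T_m$ simply returns a scalar multiple of $e_k$, which again lies in $T_m(H)$; hence $T_nT_m(H)\subset T_m(H)$, and symmetrically $T_n^*T_m^*(H)\subset T_m^*(H)$. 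Thus the invariance conditions of Theorem~\ref{T:Hnorm-norm} are satisfied, and that theorem immediately yields the equivalence of the four statements, which is precisely the assertion of Theorem~\ref{T:Hequivalence}.

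The one genuine point of care, and the step I expect to demand the most attention, is the sesquilinearity of the inner product, flagged in the opening remark of the section. In the Banach-space development the adjoint was defined via a bilinear pairing, whereas here $(S_n^A)^*$ carries the complex conjugates $\overline{a}_{nk}$ and the reversed inner product $\langle f_k,e_k\rangle$ in the denominator, as recorded in the explicit formula for $(S_n^A)^*(h)$. I would check that this conjugation is harmless: it does not affect which coefficients $a_{nk}$ vanish, so the description of $T_n^*(H)$ as a span of the relevant $f_k$ is unchanged, and the invariance computation $T_n^*T_m^*(H)\subset T_m^*(H)$ goes through verbatim once one observes that $P_k^*$ is again a rank-one projection onto $\spn\{f_k\}$. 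Since a Hilbert space is reflexive, there is no weak versus weak$^*$ distinction, so no auxiliary hypotheses (such as the $w^*$-density of $Z$ or the matrix $A^0$ of Theorem~\ref{T:equivalence}) are needed, and the statement is correspondingly cleaner. I would therefore simply remark that the proof runs identically to that of Theorem~\ref{T:equivalence}, with the conjugations absorbed into the definition of the Hilbert-space adjoint, and omit the routine details.
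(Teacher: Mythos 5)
Your proposal is correct and follows essentially the same route as the paper, which simply notes that Theorem~\ref{T:Hequivalence} follows from Theorem~\ref{T:Hnorm-norm} exactly as Theorem~\ref{T:equivalence} follows from Theorem~\ref{T:norm-norm} and omits the details; your truncation to the finite-rank operators $T_n$, the verification of the invariance conditions, and the observation that the $A^0$ and $w^*$-density hypotheses are not needed (since the statements involving $Y$ and $Z$ do not appear in the Hilbert-space version) are all accurate fillings-in of those omitted details.
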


There are also  Hilbert-space versions of the limitation theorems,
Theorem~\ref{T:limitation} and Theorem~\ref{T:cesarolimitation}. 
The proofs are the same as before.

\begin{theorem}\label{T:Hlimitation}
Suppose that $S_n^A(x)\to x$ (weakly or in norm) for all $x\in H$, and that $A$ admits a left inverse $B$.
Then, writing $B_j:=\sum_{n\ge0}|b_{jn}|$, we have
\[
\frac{\|e_j\|\|f_j\|}{|\langle e_j,f_j\rangle|}=O(B_j) \quad(j\to\infty).
\]
\end{theorem}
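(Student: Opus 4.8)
The plan is to imitate the proof of Theorem~\ref{T:limitation} verbatim, since the Hilbert-space setting requires only cosmetic changes. The key algebraic identity is that the left inverse $B$ of $A$ recovers each rank-one projection $P_j$ as an absolutely convergent combination of the summation operators, namely $\sum_{n\ge0}b_{jn}S_n^A=P_j$. First I would establish this identity by substituting $S_n^A=\sum_{k\ge0}a_{nk}P_k$, interchanging the order of summation (justified by the absolute convergence conditions \eqref{E:convcond} and \eqref{E:inverse}), and using the defining property $\sum_{n\ge0}b_{jn}a_{nk}=\delta_{jk}$ of the left inverse so that only the $k=j$ term survives.

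Next I would invoke uniform boundedness. The hypothesis that $S_n^A(x)\to x$ for all $x\in H$ (whether weakly or in norm) forces $\sup_n\|S_n^A(x)\|<\infty$ pointwise, and then the Banach--Steinhaus theorem gives $\sup_n\|S_n^A\|<\infty$. Here the only subtlety worth a remark is the weak case: weak convergence of $(S_n^A(x))$ implies it is weakly bounded, hence norm-bounded by the uniform boundedness principle applied to the functionals, and the argument proceeds identically. With $M:=\sup_n\|S_n^A\|<\infty$ in hand, I would estimate
\[
\|P_j\|=\Bigl\|\sum_{n\ge0}b_{jn}S_n^A\Bigr\|\le\sum_{n\ge0}|b_{jn}|\,\|S_n^A\|\le MB_j,
\]
so that $\|P_j\|=O(B_j)$. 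Finally, substituting the explicit formula $\|P_j\|=\|e_j\|\|f_j\|/|\langle e_j,f_j\rangle|$ for the norm of the rank-one projection yields the claimed estimate.

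Since Theorem~\ref{T:Hlimitation} is word-for-word the Hilbert-space transcription of Theorem~\ref{T:limitation}, there is genuinely no obstacle: the Banach-space proof uses nothing beyond boundedness, the triangle inequality, and the algebra of the $P_k$, all of which transfer directly. The only points demanding attention are bookkeeping ones. The adjoint $(S_n^A)^*$ now involves the conjugated coefficients $\overline{a}_{nk}$ because the inner product is sesquilinear, but this plays no role in the present statement, which concerns $S_n^A$ rather than its adjoint, so no conjugation enters the computation. Likewise the formula for $\|P_j\|$ is unchanged between the two settings. Accordingly I expect the proof to reduce to the single remark that it is identical to that of Theorem~\ref{T:limitation}, and indeed the paper signals exactly this by stating beforehand that ``the proofs are the same as before.''
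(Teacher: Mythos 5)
Your proposal is correct and matches the paper exactly: the paper itself proves Theorem~\ref{T:Hlimitation} by remarking that the argument for Theorem~\ref{T:limitation} (the identity $\sum_n b_{jn}S_n^A=P_j$, Banach--Steinhaus, and the formula for $\|P_j\|$) carries over unchanged to the Hilbert-space setting. Your observation that the sesquilinearity only affects the adjoint, which does not appear in this statement, is the right bookkeeping point.
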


\begin{theorem}\label{T:Hcesarolimitation}
Let $\alpha\ge0$. If $\sigma_n^\alpha(x)\to x$ (weakly or in norm) for all $x\in H$, then
\[
\frac{\|e_j\|\|f_j\|}{|\langle e_j,f_j\rangle|}=O(j^\alpha) \quad(j\to\infty).
\]
\end{theorem}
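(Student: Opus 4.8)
The plan is to reduce Theorem~\ref{T:Hcesarolimitation} to the general limitation theorem of the Hilbert-space setting, namely Theorem~\ref{T:Hlimitation}, exactly as Theorem~\ref{T:cesarolimitation} was reduced to Theorem~\ref{T:limitation} in the Banach-space setting. The assertion of the remark preceding the theorem states explicitly that ``the proofs are the same as before,'' so the task is essentially to verify that the computation carried out for Theorem~\ref{T:cesarolimitation} transfers verbatim, taking care only of the one structural change in the Hilbert-space set-up: the adjoint operators $(S_n^A)^*$ now carry a complex conjugate on the matrix entries, so that $(S_n^A)^* = \sum_k \overline{a}_{nk} P_k^*$ rather than $\sum_k a_{nk} P_k^*$.

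First I would fix $\alpha \ge 0$ and take $A=(a_{nk})$ to be the Ces\`aro matrix with $a_{nk}=\binom{n}{k}/\binom{n+\alpha}{k}$ for $0\le k\le n$. The key algebraic identity, already established in the proof of Theorem~\ref{T:cesarolimitation}, is the factorization
\[
a_{nk}=\binom{n-k+\alpha}{\alpha}\Big/\binom{n+\alpha}{\alpha}=\gamma_n^{-1}\hat{g}(n-k),
\]
where $\gamma_n:=\binom{n+\alpha}{\alpha}$ is increasing and $g(z)=(1-z)^{-\alpha-1}=1/f(z)$ with $f(z)=(1-z)^{\alpha+1}$. Since $f\in W^+(\DD)$ (the estimate $\sum_{k\ge1}|\hat f(k)|\le(\sum_k k^{-2})^{1/2}\|f'\|_{H^2}<\infty$ is unchanged), Theorem~\ref{T:inversebound} applies and yields a lower-triangular left inverse $B=(b_{jn})$ with $\sum_{n}|b_{jn}|\asymp\gamma_j=O(j^\alpha)$ by Stirling's formula. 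None of this depends on whether the ambient space is a Banach space or a Hilbert space, since Theorem~\ref{T:inversebound} is a purely scalar statement about the matrices.

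The one point deserving a remark is whether the complex conjugation in the Hilbert-space adjoint affects the hypothesis ``$A$ admits a left inverse $B$'' as used in Theorem~\ref{T:Hlimitation}. It does not: the definition of a left inverse in \eqref{E:inverse} and the statement of Theorem~\ref{T:Hlimitation} concern only the matrix $A$ itself and the scalar identity $\sum_n b_{jn}a_{nk}=\delta_{jk}$, with no reference to adjoints. Moreover, the Ces\`aro matrix has real entries, so conjugation is invisible here in any case. I would therefore simply invoke Theorem~\ref{T:Hlimitation} with this $B$ to conclude
\[
\frac{\|e_j\|\|f_j\|}{|\langle e_j,f_j\rangle|}=O(B_j)=O(\gamma_j)=O(j^\alpha)\quad(j\to\infty),
\]
which is the desired bound. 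I do not anticipate a genuine obstacle: the only thing to watch is the notational swap of $\psi_k$ for $f_k$ and the conjugated adjoint formula, neither of which enters the scalar inverse computation. The proof is thus a direct transcription, and the honest thing to write is that it proceeds precisely as in the proof of Theorem~\ref{T:cesarolimitation}, now citing Theorem~\ref{T:Hlimitation} in place of Theorem~\ref{T:limitation}.
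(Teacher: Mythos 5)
Your proposal is correct and follows exactly the paper's route: the paper states that the proofs of the Hilbert-space limitation theorems ``are the same as before,'' i.e., one reduces to Theorem~\ref{T:Hlimitation} via the factorization $a_{nk}=\gamma_n^{-1}\hat{g}(n-k)$ and Theorem~\ref{T:inversebound}, precisely as in the proof of Theorem~\ref{T:cesarolimitation}. Your observation that the complex conjugation in the Hilbert-space adjoint does not affect the scalar left-inverse condition is a correct and worthwhile check, but introduces no new content.
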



\subsection{Reproducing kernel spaces of holomorphic functions}\label{S:RKHS}
We shall now apply these ideas to reproducing kernel  spaces of holomorphic functions
on the unit disk.

Let $H$ be a Hilbert space of holomorphic functions on $\DD$
such that:
\begin{itemize}
\item convergence in the norm of $H$ implies pointwise convergence on $\DD$;
\item $H$ contains the polynomials.
\end{itemize}
The first assumption implies that, for each $w\in\DD$, the functional
$h\mapsto h(w)$ is continuous on $H$, so, by the Riesz representation theorem, 
there exists a unique $k_w\in H$ such that
\[
h(w)=\langle h,k_w\rangle \quad(h\in H).
\]
We define $K: H\times H\to\CC$ by
\[
K(z,w):=k_w(z)=\langle k_w,k_z\rangle \quad (z,w\in\DD).
\]
The function  $K$ is  the reproducing kernel of $H$. Clearly it satisfies
$K(w,z)=\overline{K(z,w)}$, and $K(z,w)$ is holomorphic in $z$ for each fixed~$w$.
Therefore it is anti-holomorphic in $w$ for each fixed~$z$.
We shall need the following simple lemma about derivatives of $K$.

\begin{lemma}\label{L:RKHS}
For each $w\in\DD$ and each $n\ge0$, let
\[
k_{w,n}(z):=\frac{1}{n!}\frac{\partial^n}{\partial\overline{w}^n}K(z,w) \quad(z\in\DD).
\]
Then $k_{w,n}\in H$ and 
\begin{equation}\label{E:RKHS}
\langle h,k_{w,n}\rangle=\frac{h^{(n)}(w)}{n!}
\quad(h\in H).
\end{equation}
\end{lemma}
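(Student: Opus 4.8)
The plan is to establish Lemma~\ref{L:RKHS} by differentiating the reproducing identity under the integral/inner-product sign, then upgrading the resulting formal relation to a genuine statement about a bona fide element of $H$. The reproducing property gives $h(w)=\langle h,k_w\rangle$ for every $h\in H$ and every $w\in\DD$. Since $K(z,w)=k_w(z)$ is anti-holomorphic in $w$, the function $\overline{w}\mapsto k_w$ ought to be a holomorphic $H$-valued map, and one expects to be able to differentiate the identity $n$ times in $\overline w$, bringing the derivative inside the inner product (which is conjugate-linear in its second slot) to obtain $\langle h,\partial_{\overline w}^n k_w\rangle=h^{(n)}(w)$. Dividing by $n!$ and identifying $k_{w,n}=\tfrac{1}{n!}\partial_{\overline w}^n k_w$ then yields \eqref{E:RKHS}.

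First I would make precise the sense in which $w\mapsto k_w$ depends holomorphically on $\overline w$ as an $H$-valued function. The cleanest route is to fix $w_0\in\DD$ and consider the difference quotients $(k_{w}-k_{w_0})/(\overline w-\overline{w_0})$ and show they converge in the norm of $H$; equivalently, one verifies that the family $\{k_w\}$ has strong $\overline w$-derivatives of all orders by a standard argument using the fact that the weak derivatives exist (pairing against any $h\in H$ gives $\overline{\partial}_w^n \langle h,k_w\rangle=\overline{h^{(n)}(w)}$ up to conjugation, which is holomorphic in $\overline w$) together with local boundedness of $\|k_w\|$ on compact subsets of $\DD$. A convenient concrete device is to note that $\|k_w\|^2=K(w,w)$ is continuous and hence locally bounded, and that weak holomorphy of an $H$-valued map plus local boundedness implies strong holomorphy. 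Once strong differentiability is in hand, the derivative $\partial_{\overline w}^n k_w$ is a well-defined element of $H$, and by construction $k_{w,n}=\tfrac1{n!}\partial_{\overline w}^n k_w\in H$.

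Having established that $k_{w,n}\in H$, the formula \eqref{E:RKHS} follows by differentiating $\langle h,k_w\rangle=h(w)$ exactly $n$ times with respect to $\overline w$. The key point is that, because the inner product is conjugate-linear in the second argument, differentiating $\langle h,k_w\rangle$ with respect to $\overline w$ and pulling the derivative inside gives precisely $\langle h,\partial_{\overline w}^n k_w\rangle$, with no unwanted conjugates appearing on the $h$-side; meanwhile the right-hand side $h(w)$ is holomorphic in $w$, so its $\overline w$-derivatives vanish and one must instead differentiate using that $\langle h,k_w\rangle$, read as a function of the single complex variable whose derivatives in $\overline w$ reproduce $h^{(n)}(w)$ through the anti-holomorphic dependence of $k_w$. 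I would verify this bookkeeping carefully on the defining monomials or on a dense set and then extend by the continuity of the inner product.

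The main obstacle is the interchange of differentiation and the inner product, i.e.\ justifying that the \emph{strong} (norm) derivative of $w\mapsto k_w$ exists and coincides with what the \emph{weak} derivatives predict. All the algebra is routine once this is granted, but the passage from weak to strong holomorphy is where the real work lies. I expect to handle it either by invoking the standard theorem that a locally bounded weakly holomorphic Banach-space-valued function is strongly holomorphic, or, if a self-contained argument is preferred, by directly estimating the norm of the difference quotient $\big\|(k_w-k_{w_0})/(\overline w-\overline{w_0})-k_{w_0,1}\big\|$ using the polarization of the reproducing kernel and the local boundedness of $K(w,w)$. With that in place, the remaining steps are purely formal.
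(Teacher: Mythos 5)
Your plan can be made to work, but it takes a genuinely different and heavier route than the paper's. You place all the weight on showing that $w\mapsto k_w$ is strongly (norm-)differentiable in $\overline{w}$, via the weak-to-strong holomorphy theorem for Banach-space-valued maps, and you rightly identify that passage as the crux. The paper avoids vector-valued holomorphy entirely: it first upgrades the standing hypothesis (norm convergence implies pointwise convergence) to locally uniform convergence, so that by Cauchy's estimates $h\mapsto h^{(n)}(w)/n!$ is a \emph{bounded} linear functional on $H$; the Riesz representation theorem then immediately produces an element $k_{w,n}\in H$ satisfying \eqref{E:RKHS}, with no limiting process on the kernel side at all. The only remaining task is to identify this abstract representer with $\tfrac{1}{n!}\,\partial^n K(z,w)/\partial\overline{w}^n$, which is done in one line by testing against the kernels themselves: $k_{w,n}(z)=\langle k_{w,n},k_z\rangle=\overline{\langle k_z,k_{w,n}\rangle}=\overline{k_z^{(n)}(w)}/n!=\tfrac{1}{n!}\,\partial^nK(z,w)/\partial\overline{w}^n$. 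That same evaluation step is also the cleanest way to finish your argument, since a norm limit has pointwise values equal to the pointwise limits, so your strong derivative must coincide with the pointwise $\overline{w}$-derivative of $K(z,\cdot)$. Two cautions on your write-up: continuity of $w\mapsto K(w,w)$ is not free from the stated hypotheses --- what you get cheaply (by Banach--Steinhaus, from pointwise boundedness of $w\mapsto|h(w)|$ on compacta) is local boundedness of $\|k_w\|$, which is all you need for weak-to-strong holomorphy; and your description of the differentiation bookkeeping (``$h(w)$ is holomorphic in $w$, so its $\overline{w}$-derivatives vanish and one must instead differentiate using\dots'') is garbled --- the correct statement is that $\partial/\partial w$ applied to $\langle h,k_w\rangle=h(w)$ passes into the conjugate-linear slot as $\partial/\partial\overline{w}$, giving $h'(w)=\langle h,\partial k_w/\partial\overline{w}\rangle$ directly, with nothing vanishing. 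In short: correct strategy, but the paper's Riesz-plus-evaluation argument buys the same conclusion with far less analytic machinery.
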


\begin{proof}
As norm-convergence in $H$ implies local uniform convergence
on $\DD$, the map $h\mapsto h^{(n)}(w)/n!$ is a continuous linear functional on $H$, 
so, by the Riesz theorem again, there exists 
 $k_{w,n}\in H$ such that \eqref{E:RKHS} holds.
It remains to identify $k_{w,n}$, which we do as follows. 
For each $z\in\DD$, we have
\begin{align*}
k_{w,n}(z)
&=\langle k_{w,n},k_z\rangle
=\overline{\langle k_z,k_{w,n}\rangle}\\
&=\overline{\frac{k_z^{(n)}(w)}{n!}}
=\overline{\frac{1}{n!}\frac{\partial^n}{\partial w^n}K(w,z)}
=\frac{1}{n!}\frac{\partial^n}{\partial\overline{w}^n}K(z,w).\qedhere
\end{align*}
\end{proof}

Now we define sequences $(e_j)_{j\ge0}$
and $(f_j)_{j\ge0}$ in $H$  by
$e_j:=z^j$ and $f_j:=k_{0,j}$.
Note that, if $h\in H$, then
\[
\langle h, f_j\rangle=\frac{h^{(j)}(0)}{j!}=\hat{h}(j).
\]
In particular,
$\langle e_i,f_j\rangle=\delta_{ij}$,
so the theory outlined in \S\ref{S:Hsetup} applies. Note also that $\|e_j\|=\|z^j\|_H$ and
\[
\|f_j\|^2=\|k_{0,j}\|^2=\langle k_{0,j},k_{0,j}\rangle=\frac{1}{j!}k_{0,j}^{(j)}(0)
=\frac{1}{j!^2}\frac{\partial^{2j}K}{\partial z^j\partial\overline{w}^j}(0,0).
\]
Theorem~\ref{T:Hequivalence} leads to the following result.

\begin{theorem}\label{T:RKHS}
Let $(a_{nj})_{n,j\ge0}$ be an infinite matrix of complex scalars such that
$\sum_{j\ge0}|a_{nj}|\|z^j\|\|k_{0,j}\|<\infty$ for each $n\ge0$.
Then the following statements are equivalent:
\begin{enumerate}[\normalfont\rm(i)]
\item $\sum_{j\ge0}a_{nj}\hat{h}(j)z^j\to h$ weakly as $n\to\infty$, for all $h\in H$;
\item $\sum_{j\ge0}a_{nj}\hat{h}(j)z^j\to h$ in norm  as $n\to\infty$, for all $h\in H$;
\item $\sum_{j\ge0}\overline{a}_{nj}\langle h,z^j\rangle k_{0,j}\to h$ weakly as $n\to\infty$, for all $h\in H$;
\item $\sum_{j\ge0}\overline{a}_{nj}\langle h,z^j\rangle k_{0,j}\to h$ in norm as $n\to\infty$, for all $h\in H$.
\end{enumerate}
\end{theorem}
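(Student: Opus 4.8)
The plan is to obtain Theorem~\ref{T:RKHS} as a direct specialization of the abstract Hilbert-space equivalence, Theorem~\ref{T:Hequivalence}, applied to the sequences $e_j := z^j$ and $f_j := k_{0,j}$. Thus the work consists of verifying that these two sequences fit the set-up of \S\ref{S:Hsetup}, and then translating the abstract statements (i)--(iv) of Theorem~\ref{T:Hequivalence} into the concrete series appearing in the statement of Theorem~\ref{T:RKHS}.

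First I would record the biorthogonality relations. By Lemma~\ref{L:RKHS}, for any $h\in H$ we have $\langle h,f_j\rangle=h^{(j)}(0)/j!=\hat{h}(j)$, and in particular $\langle e_i,f_j\rangle=\delta_{ij}$, since this pairing extracts the $j$th Taylor coefficient of $z^i$. Hence $\langle e_j,f_k\rangle=0$ for $j\ne k$ and $\langle e_k,f_k\rangle=1\ne0$, which are exactly the conditions demanded in \S\ref{S:Hsetup}. Consequently the rank-one projections $P_k=(e_k\otimes f_k)/\langle e_k,f_k\rangle$ are well defined, with $\|P_k\|=\|e_k\|\,\|f_k\|/|\langle e_k,f_k\rangle|=\|z^k\|\,\|k_{0,k}\|$, so the standing hypothesis $\sum_j|a_{nj}|\,\|z^j\|\,\|k_{0,j}\|<\infty$ is precisely the absolute-convergence condition needed to define $S_n^A=\sum_j a_{nj}P_j$.

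Next I would substitute these values into the explicit formulas for $S_n^A$ and its adjoint given in \S\ref{S:Hsetup}. Since $\langle e_k,f_k\rangle=1$ and $\langle h,f_k\rangle=\hat{h}(k)$, the first formula collapses to $S_n^A(h)=\sum_j a_{nj}\hat{h}(j)z^j$, which is the series in (i)--(ii). For the adjoint, using $\langle f_k,e_k\rangle=\overline{\langle e_k,f_k\rangle}=1$, the second formula becomes $(S_n^A)^*(h)=\sum_j\overline{a}_{nj}\langle h,z^j\rangle\,k_{0,j}$, which is the series in (iii)--(iv). With these identifications in hand, the four statements of Theorem~\ref{T:RKHS} are literally the four statements of Theorem~\ref{T:Hequivalence}, so the equivalences follow at once.

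The proof is therefore essentially bookkeeping, and I expect no serious obstacle. The only points demanding genuine care are the sesquilinearity of the inner product --- which is why the conjugates $\overline{a}_{nj}$ and the pairing $\langle h,z^j\rangle$ (rather than $\langle z^j,h\rangle$) appear in the adjoint --- together with the clean evaluation $\langle z^i,k_{0,j}\rangle=\delta_{ij}$, both of which rest on Lemma~\ref{L:RKHS}. It is worth noting that, unlike Theorem~\ref{T:equivalence}, no auxiliary matrix $A^0$ is required here, because the reflexivity of $H$ makes statements (iii)--(iv) refer to all of $H$ rather than merely to a weak*-dense subspace, so the corresponding density hypotheses are automatic.
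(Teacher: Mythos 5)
Your proposal is correct and follows essentially the same route as the paper: the paper also obtains Theorem~\ref{T:RKHS} as an immediate specialization of Theorem~\ref{T:Hequivalence} to $e_j:=z^j$ and $f_j:=k_{0,j}$, using Lemma~\ref{L:RKHS} to get $\langle h,k_{0,j}\rangle=\hat h(j)$ and hence $\langle e_i,f_j\rangle=\delta_{ij}$, with the hypothesis $\sum_j|a_{nj}|\,\|z^j\|\,\|k_{0,j}\|<\infty$ matching the condition $\sum_j|a_{nj}|\,\|P_j\|<\infty$. Your closing observations about sesquilinearity and the absence of an $A^0$ hypothesis in the Hilbert-space setting are likewise consistent with the paper's treatment.
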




The following result is a limitation theorem for reproducing kernel spaces.
It is an immediate consequence of Theorem~\ref{T:Hcesarolimitation}.

\begin{theorem}\label{T:RKHSlimitation}
Let $H$ be a reproducing-kernel space of holomorphic functions on $\DD$ that contains the polynomials.
If $\alpha\ge0$ and $\sigma_n^\alpha(h)\to h$ (weakly or  in norm) for all $h\in H$, then 
\[
\|z^j\|\|k_{0,j}\|=O(j^\alpha)\quad(j\to\infty).
\]
\end{theorem}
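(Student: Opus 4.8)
The plan is to derive Theorem~\ref{T:RKHSlimitation} as a direct specialization of the abstract Hilbert-space limitation theorem, Theorem~\ref{T:Hcesarolimitation}, by matching the concrete data of the reproducing kernel space to the abstract set-up of \S\ref{S:Hsetup}. First I would recall the choices already made just before the statement of Theorem~\ref{T:RKHS}, namely $e_j:=z^j$ and $f_j:=k_{0,j}$. The key verification is that these sequences satisfy the biorthogonality conditions required in \S\ref{S:Hsetup}; but this was already established, since $\langle h,f_j\rangle=\hat{h}(j)$ for all $h\in H$ gives $\langle e_i,f_j\rangle=\langle z^i,f_j\rangle=\widehat{(z^i)}(j)=\delta_{ij}$. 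In particular $\langle e_j,f_j\rangle=1\ne0$ and $\langle e_i,f_j\rangle=0$ for $i\ne j$, so the abstract theory applies with this data.

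Once the identification is in place, the rest is immediate. I would simply invoke Theorem~\ref{T:Hcesarolimitation}: under the hypothesis that $\sigma_n^\alpha(h)\to h$ (weakly or in norm) for all $h\in H$ and with $\alpha\ge0$, that theorem yields
\[
\frac{\|e_j\|\,\|f_j\|}{|\langle e_j,f_j\rangle|}=O(j^\alpha)\quad(j\to\infty).
\]
Substituting $e_j=z^j$, $f_j=k_{0,j}$, and $\langle e_j,f_j\rangle=1$, the left-hand side collapses to $\|z^j\|\,\|k_{0,j}\|$, which is exactly the claimed estimate.

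The only point requiring a word of care is confirming that the two standing hypotheses on $H$ in the statement---that $H$ is a reproducing-kernel space of holomorphic functions containing the polynomials---are precisely what is needed to put the abstract machinery into gear. Containing the polynomials guarantees that each $e_j=z^j$ genuinely lies in $H$, and the reproducing-kernel (hence Riesz-representation) structure, together with the tacit assumption that norm-convergence implies pointwise convergence, is what Lemma~\ref{L:RKHS} used to produce the elements $f_j=k_{0,j}\in H$ and the formula $\langle h,f_j\rangle=\hat{h}(j)$. I expect no genuine obstacle here: the entire content of the theorem is the observation that the abstract limitation theorem, once translated through these concrete identifications, reads off directly as a growth bound on the product of norms $\|z^j\|\,\|k_{0,j}\|$, since the normalizing denominator $|\langle e_j,f_j\rangle|$ is identically $1$ in this biorthonormal setting. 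Accordingly the proof is a one-line deduction, and the work lies entirely in the setup already carried out above.
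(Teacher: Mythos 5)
Your proposal is correct and is exactly the paper's argument: the paper states that Theorem~\ref{T:RKHSlimitation} is an immediate consequence of Theorem~\ref{T:Hcesarolimitation}, via the same identifications $e_j=z^j$, $f_j=k_{0,j}$, $\langle e_j,f_j\rangle=1$ already set up in \S\ref{S:RKHS}. You have merely spelled out the one-line deduction the paper leaves implicit.
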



\subsection{De Branges--Rovnyak spaces}\label{S:H(b)}
We now specialize to the case where $H=\cH(b)$, the de Branges--Rovnyak space with symbol $b$.
Here $b$ is an element of the unit ball of $H^\infty$. By definition, $\cH(b)$ is the reproducing-kernel space on $\DD$
with  kernel
\[
K(z,w)=\frac{1-b(z)\overline{b(w)}}{1-z\overline{w}} \quad(z,w\in\DD).
\]
The space $\cH(b)$ contains the polynomials iff $b$ is a non-extreme point of the unit ball
of $H^\infty$, and in this case the polynomials are dense in $\cH(b)$. It is known that $b$ is non-extreme iff its boundary values satisfy $\log(1-|b|^2)\in L^1(\TT)$. Henceforth, we assume that this is the case. 
For further information on $\cH(b)$-spaces, we refer to \cite{Sa94} and \cite{FM16a,FM16b}.

The following result will be useful in what follows.

\begin{proposition}
Let $b$ be a non-extreme point of the unit ball of $H^\infty$.
Then $\inf_{j\ge0}\|k_{0,j}\|_{\cH(b)}>0$.
\end{proposition}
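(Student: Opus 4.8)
The plan is to show that $\|k_{0,j}\|_{\cH(b)}$ is bounded below uniformly in $j$, where $k_{0,j}=f_j$ is the reproducing element for the functional $h\mapsto \hat{h}(j)$. The key is that $\langle h,k_{0,j}\rangle=\hat{h}(j)$ for all $h\in\cH(b)$, so by Cauchy--Schwarz we have $|\hat{h}(j)|\le\|h\|_{\cH(b)}\|k_{0,j}\|_{\cH(b)}$. If I can produce, for each $j$, a test function $h_j\in\cH(b)$ whose $j$-th Taylor coefficient is comparable to $1$ while $\|h_j\|_{\cH(b)}$ stays bounded above, then rearranging gives the desired lower bound on $\|k_{0,j}\|_{\cH(b)}$.

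The natural choice is $h_j:=z^j=e_j$ itself, since then $\hat{h_j}(j)=1$ and the inequality reads $1\le\|z^j\|_{\cH(b)}\,\|k_{0,j}\|_{\cH(b)}$. This reduces the problem to establishing the upper bound $\sup_{j\ge0}\|z^j\|_{\cH(b)}<\infty$. So the first step I would carry out is to record the Cauchy--Schwarz inequality $1=\langle z^j,k_{0,j}\rangle\le\|z^j\|_{\cH(b)}\|k_{0,j}\|_{\cH(b)}$, which immediately yields $\|k_{0,j}\|_{\cH(b)}\ge 1/\|z^j\|_{\cH(b)}$. The second step, which is the substance of the argument, is to bound $\|z^j\|_{\cH(b)}$ uniformly in $j$.

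The hard part will be the uniform bound on $\|z^j\|_{\cH(b)}$. Here I would exploit the non-extreme hypothesis: when $b$ is non-extreme, there is an associated outer function $a$ satisfying $|a|^2+|b|^2=1$ on $\TT$, and $\cH(b)$ admits the description in terms of the pair $(b,a)$. A clean route is to use the fact that multiplication by $z$ (the shift) acts on $\cH(b)$, and more usefully that $\cH(b)$ contains $H^\infty$-type or $H^2$-type information through the complementary space; one can try to estimate $\|z^j\|_{\cH(b)}$ directly via the formula $\|z^j\|_{\cH(b)}^2=\|z^j\|_{H^2}^2+\|$ (correction term$)\|^2$ coming from the decomposition of the $\cH(b)$ norm. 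Since $\|z^j\|_{H^2}=1$ for all $j$, the task becomes controlling the correction term uniformly in $j$, and I expect this to follow from the membership $\log(1-|b|^2)\in L^1$, equivalently from properties of the outer function $a$, rather than requiring any delicate cancellation.

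Alternatively, and perhaps more robustly, I would invoke the standard fact (see \cite{Sa94} or \cite{FM16a,FM16b}) that for non-extreme $b$ the shift operator $S$ is a bounded operator on $\cH(b)$. Since $z^j=S^j 1$ and $1\in\cH(b)$, a crude bound would give $\|z^j\|_{\cH(b)}\le\|S\|^j$, which is not uniform; so this naive estimate is insufficient and I would instead need the finer information that $\|S^j 1\|_{\cH(b)}$ stays bounded. The cleanest justification is therefore to write out the $\cH(b)$ norm of $z^j$ explicitly and verify boundedness using the $(b,a)$ model; establishing that uniform bound is precisely the crux of the proof, and everything else is the elementary Cauchy--Schwarz reduction above.
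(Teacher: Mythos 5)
There is a genuine gap, and it is fatal to the route you chose: the uniform bound $\sup_{j\ge0}\|z^j\|_{\cH(b)}<\infty$, which you correctly identify as the crux of your argument, is \emph{false} for general non-extreme $b$. It holds precisely when $1/(1-|b|^2)\in L^1(\TT)$ (equivalently, when $\phi:=b/a\in H^2$), and this is strictly stronger than non-extremality, which only requires $\log(1-|b|^2)\in L^1(\TT)$. Indeed, by the formula $\|z^j\|_{\cH(b)}^2=1+\sum_{i=0}^j|c_i|^2$ with $\phi(z)=\sum_{j\ge0}c_jz^j$, the example $\phi(z)=z/(1-z)$ (the local Dirichlet space $\cD(\delta_1)$, which appears later in this paper) gives $\|z^j\|_{\cH(b)}^2=1+j\to\infty$, even though $b$ is non-extreme and the proposition still holds there. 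Your Cauchy--Schwarz reduction $\|k_{0,j}\|_{\cH(b)}\ge 1/\|z^j\|_{\cH(b)}$ is correct but only yields a lower bound that degenerates exactly in the cases where $\|z^j\|_{\cH(b)}$ is unbounded, so it cannot prove the statement in the generality claimed. Your instinct that the naive shift estimate $\|S^j1\|\le\|S\|^j$ is insufficient was right, but the finer bound you hoped for simply does not exist.

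The proof in the paper avoids $\|z^j\|_{\cH(b)}$ entirely and instead computes $\|k_{0,j}\|_{\cH(b)}$ directly from the kernel. Differentiating $K(z,w)=(1-b(z)\overline{b(w)})/(1-z\overline{w})$ via Leibniz gives
\begin{equation*}
k_{0,j}(z)=z^j-b(z)\sum_{i=0}^j\overline{\hat{b}(i)}\,z^{j-i},
\end{equation*}
and extracting the coefficient of $z^j\overline{w}^j$ in the double power-series expansion of $K$ yields
\begin{equation*}
\|k_{0,j}\|_{\cH(b)}^2=1-\sum_{i=0}^j|\hat{b}(i)|^2=1-\|s_j(b)\|_{H^2}^2\ \ge\ 1-\|b\|_{H^2}^2.
\end{equation*}
Non-extremality forces $\|b\|_{H^2}<1$ (otherwise $|b|=1$ a.e.\ and $\log(1-|b|^2)\notin L^1$), so the right-hand side is a positive constant independent of $j$. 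If you want to salvage your approach, this exact computation is the missing ingredient; but once you have it, the Cauchy--Schwarz step becomes superfluous.
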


\begin{proof}
We compute an expression for $k_{0,j}$ in $\cH(b)$. Using Leibniz' theorem, we have
\begin{align*}
k_{0,j}(z)
&=\frac{1}{j!}\frac{\partial^j}{\partial \overline{w}^j}K(z,0)\\
&=\frac{1}{j!}\sum_{i=0}^j \binom{j}{i}\frac{\partial^i}{\partial \overline{w}^i}\Bigl(1-b(z)\overline{b(w)}\Bigr)\Bigl|_{w=0}\frac{\partial^{j-i}}{\partial \overline{w}^{j-i}}\Bigl(1-z\overline{w}\Bigr)^{-1}\Bigl|_{w=0}\\
&=z^j-b(z)\sum_{i=0}^j \overline{\hat{b}(i)}z^{j-i}.
\end{align*}
Thus, viewed as  functions on $\TT$,
\[
k_{0,j}=z^j(1-b\overline{s_j(b)}).
\]

Also, expanding $K(z,w)=(1-b(z)\overline{b(w)})/(1-z\overline{w})$ as a double power series in $z,\overline{w}$,
and computing the coefficient of $z^j\overline{w}^j$, we find that
\[
\|k_{0,j}\|_{\cH(b)}^2=\frac{1}{j!^2}\frac{\partial^{2j}K}{\partial z^j\partial\overline{w}^j}(0,0)
=1-\sum_{i=0}^j |\hat{b}(i)|^2=1-\|s_j(b)\|_{H^2}^2.
\]
In particular, we have
\[
1\ge \|k_{0,j}\|_{\cH(b)}^2\ge 1-\|b\|_{H^2}^2 \quad(j\ge0).
\]
As $b$ is non-extreme, $\|b\|_{H^2}<\|b\|_{H^\infty}\le1$ or $\|b\|_{H^2}=\|b\|_{H^\infty}<1$. 
Either way, $1-\|b\|_{H^2}^2>0$. Hence $\inf_{j\ge0}\|k_{0,j}\|_{\cH(b)}>0$, as claimed.
\end{proof}

\begin{remark}
A similar result holds in the more general finite-rank $\cH[B]$-spaces studied 
by Aleman and Malman in \cite{AM19}. 
However, not all reproducing kernel function spaces have this property. 
For example, in the classical Dirichlet space on the unit disk, 
we have $\|k_{0,j}\|\asymp 1/j$. 
\end{remark}

Feeding this  information into the results of \S\ref{S:RKHS}, we deduce the following results.
(We write $\langle\cdot,\cdot\rangle_b$ for the inner product in $\cH(b)$.)

\begin{theorem}\label{T:H(b)}
Let $b$ be a non-extreme point of the unit ball of $H^\infty$.
Let $(a_{nj})_{n,j\ge0}$ be a matrix of complex scalars such that
$\sum_{j\ge0}|a_{nj}|\|z^j\|_{\cH(b)}<\infty$
for each $n\ge0$.
Then the following statements are equivalent:
\begin{enumerate}[\normalfont\rm(i)]
\item $\sum_{j\ge0}a_{nj}\hat{h}(j)z^j\to h$ weakly in $\cH(b)$, for all $h\in\cH(b)$;
\item $\sum_{j\ge0}a_{nj}\hat{h}(j)z^j\to h$ in norm in $\cH(b)$, for all $h\in\cH(b)$;
\item $\sum_{j\ge0}\overline{a}_{nj}(1-b\overline{s_j(b)})\langle h,z^j\rangle_b z^j\to h$ weakly in $\cH(b)$, for all $h\in\cH(b)$;
\item $\sum_{j\ge0}\overline{a}_{nj}(1-b\overline{s_j(b)})\langle h,z^j\rangle_b z^j\to h$ in norm in $\cH(b)$, for all $h\in\cH(b)$.
\end{enumerate}
\end{theorem}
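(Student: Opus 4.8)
The plan is to obtain this theorem as a direct specialization of Theorem~\ref{T:RKHS} to the reproducing-kernel space $H=\cH(b)$, feeding in the two concrete facts about $\cH(b)$ that were established in the course of proving the preceding proposition. First I would record those facts: the explicit formula
\[
k_{0,j}=z^j\bigl(1-b\,\overline{s_j(b)}\bigr)
\]
(viewed as a function on $\TT$), together with the identity $\|k_{0,j}\|_{\cH(b)}^2=1-\|s_j(b)\|_{H^2}^2$. The latter gives both the upper bound $\|k_{0,j}\|_{\cH(b)}\le1$ and, via the strict inequality $\|b\|_{H^2}<1$ used in the proposition, the positive lower bound $\inf_{j}\|k_{0,j}\|_{\cH(b)}>0$. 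Consequently $\|k_{0,j}\|_{\cH(b)}\asymp1$.

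The point of this two-sided bound is that it reconciles the summability hypotheses. The condition imposed in the present statement, $\sum_{j\ge0}|a_{nj}|\,\|z^j\|_{\cH(b)}<\infty$, is then equivalent to the condition $\sum_{j\ge0}|a_{nj}|\,\|z^j\|_{\cH(b)}\|k_{0,j}\|_{\cH(b)}<\infty$ demanded in Theorem~\ref{T:RKHS}. Hence Theorem~\ref{T:RKHS} applies, with the choices $e_j=z^j$ and $f_j=k_{0,j}$ fixed in \S\ref{S:RKHS}, and it yields at once the equivalence of its four statements.

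It then remains only to translate those four statements into the forms asserted here. Conditions (i) and (ii) coincide verbatim with (i) and (ii) of Theorem~\ref{T:RKHS}. For (iii) and (iv), I would substitute the explicit formula for $k_{0,j}$ into the expression $\sum_{j\ge0}\overline{a}_{nj}\langle h,z^j\rangle k_{0,j}$ occurring there; recalling that $\langle h,z^j\rangle$ is computed in the $\cH(b)$ inner product $\langle\cdot,\cdot\rangle_b$, this becomes $\sum_{j\ge0}\overline{a}_{nj}\,(1-b\,\overline{s_j(b)})\,\langle h,z^j\rangle_b\,z^j$, which is exactly the summand in statements (iii) and (iv) as written.

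I do not anticipate a genuine obstacle: all the analytic content—both the norm estimate and the closed form for $k_{0,j}$—has already been extracted in the proposition, so the present theorem is a bookkeeping specialization of Theorem~\ref{T:RKHS}. The only step needing a moment's care is the equivalence of the two summability conditions, which is precisely where non-extremality of $b$ is used, since it alone guarantees the lower bound $\inf_j\|k_{0,j}\|_{\cH(b)}>0$ that lets the factor $\|k_{0,j}\|_{\cH(b)}$ be dropped from the hypothesis.
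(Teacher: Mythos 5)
Your proposal is correct and matches the paper's own (very terse) argument: the paper simply says ``feeding this information into the results of \S\ref{S:RKHS}, we deduce the following results,'' which is exactly your specialization of Theorem~\ref{T:RKHS} using the formula $k_{0,j}=z^j(1-b\overline{s_j(b)})$ and the two-sided bound $\|k_{0,j}\|_{\cH(b)}\asymp1$ from the proposition. You have in fact spelled out the one point the paper leaves implicit, namely that the lower bound on $\|k_{0,j}\|_{\cH(b)}$ is what reconciles the two summability hypotheses.
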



\begin{theorem}\label{T:deBrangeslimitation}
Let $b$ be a non-extreme point of the unit ball of $H^\infty$.
If $\alpha\ge0$ and $\sigma_n^\alpha(h)\to h$ (weakly or in norm)
for all $h\in \cH(b)$, then 
\[
\|z^j\|_{\cH(b)}=O(j^\alpha)\quad(j\to\infty).
\]
\end{theorem}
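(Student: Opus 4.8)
The plan is to derive Theorem~\ref{T:deBrangeslimitation} directly from the general reproducing-kernel limitation theorem, Theorem~\ref{T:RKHSlimitation}, applied to the space $H=\cH(b)$. That theorem tells us that if $\sigma_n^\alpha(h)\to h$ (weakly or in norm) for all $h\in\cH(b)$, then
\[
\|z^j\|_{\cH(b)}\|k_{0,j}\|_{\cH(b)}=O(j^\alpha)\quad(j\to\infty).
\]
So the entire task reduces to removing the factor $\|k_{0,j}\|_{\cH(b)}$ from this product, which is precisely what the preceding Proposition supplies.

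First I would invoke Theorem~\ref{T:RKHSlimitation}, which applies because $\cH(b)$ is a reproducing-kernel Hilbert space of holomorphic functions on $\DD$ that contains the polynomials (the latter holding because $b$ is assumed non-extreme). This yields the product bound $\|z^j\|_{\cH(b)}\|k_{0,j}\|_{\cH(b)}=O(j^\alpha)$. Next I would use the Proposition proved just above, which guarantees that $c:=\inf_{j\ge0}\|k_{0,j}\|_{\cH(b)}>0$ whenever $b$ is non-extreme. Since $\|k_{0,j}\|_{\cH(b)}\ge c>0$ for every $j$, I can divide the product bound through by this uniform lower bound to conclude
\[
\|z^j\|_{\cH(b)}\le \frac{1}{c}\,\|z^j\|_{\cH(b)}\|k_{0,j}\|_{\cH(b)}=O(j^\alpha)\quad(j\to\infty),
\]
which is exactly the asserted estimate.

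In truth there is no real obstacle here: the statement is an immediate corollary, and the genuine mathematical content has already been absorbed into the two results being cited. The only point requiring any care is verifying that the hypotheses of Theorem~\ref{T:RKHSlimitation} are met, and in particular that $\cH(b)$ contains the polynomials; but this is guaranteed by the standing assumption that $b$ is non-extreme, as recorded at the start of \S\ref{S:H(b)}. The uniform positivity of $\|k_{0,j}\|_{\cH(b)}$ is the one place where non-extremality is used in an essential way, and it is exactly this feature that distinguishes $\cH(b)$ from spaces such as the Dirichlet space (where $\|k_{0,j}\|\asymp1/j$ decays, so the factor cannot simply be dropped). I would therefore present the proof as a one-line deduction, explicitly citing both Theorem~\ref{T:RKHSlimitation} for the product bound and the Proposition for the lower bound $\inf_{j\ge0}\|k_{0,j}\|_{\cH(b)}>0$.
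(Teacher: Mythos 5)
Your proof is correct and is exactly the paper's intended argument: the paper presents Theorem~\ref{T:deBrangeslimitation} as an immediate consequence of ``feeding'' the Proposition ($\inf_{j\ge0}\|k_{0,j}\|_{\cH(b)}>0$ for non-extreme $b$) into Theorem~\ref{T:RKHSlimitation}, whose product bound $\|z^j\|\,\|k_{0,j}\|=O(j^\alpha)$ you then divide by the uniform lower bound. Nothing is missing.
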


The following consequence is worth pointing out explicitly.

\begin{corollary}
Let $b$ be a non-extreme point of the unit ball of $H^\infty$.
If $s_n(h)\to h$ (weakly or in norm) for all $h\in \cH(b)$, then $\sup_{j\ge0}\|z^j\|_{\cH(b)}<\infty$.
\end{corollary}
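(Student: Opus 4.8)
The plan is to derive this Corollary as the special case $\alpha=0$ of the immediately preceding limitation theorem (Theorem~\ref{T:deBrangeslimitation}). Since $s_n=\sigma_n^0$ by the convention established in \S\ref{S:limitation}, the hypothesis that $s_n(h)\to h$ (weakly or in norm) for all $h\in\cH(b)$ is precisely the hypothesis of Theorem~\ref{T:deBrangeslimitation} with $\alpha=0$. Applying that theorem, I would conclude immediately that
\[
\|z^j\|_{\cH(b)}=O(j^0)=O(1) \quad(j\to\infty).
\]

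The only point requiring a word of explanation is the passage from the big-$O$ statement to the uniform bound $\sup_{j\ge0}\|z^j\|_{\cH(b)}<\infty$. The estimate $\|z^j\|_{\cH(b)}=O(1)$ gives a constant $C$ and an index $j_0$ such that $\|z^j\|_{\cH(b)}\le C$ for all $j\ge j_0$; the finitely many remaining norms $\|z^0\|_{\cH(b)},\dots,\|z^{j_0-1}\|_{\cH(b)}$ are each finite (every $z^j$ lies in $\cH(b)$, since the polynomials are dense in $\cH(b)$ when $b$ is non-extreme), so their maximum is finite. Taking the larger of $C$ and this maximum yields a finite bound valid for every $j\ge0$, which is exactly $\sup_{j\ge0}\|z^j\|_{\cH(b)}<\infty$.

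I do not anticipate any genuine obstacle here, since all the substantive work has already been done in Theorem~\ref{T:deBrangeslimitation} (which in turn rests on Theorem~\ref{T:RKHSlimitation} and the Ces\`aro limitation theorem). The proof is essentially a one-line deduction together with the routine observation that a big-$O$ bound over a sequence, combined with finiteness of the finitely many initial terms, upgrades to a uniform supremum bound.
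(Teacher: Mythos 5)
Your proposal is correct and is exactly the intended derivation: the paper presents this corollary as an immediate consequence of Theorem~\ref{T:deBrangeslimitation} with $\alpha=0$ (recalling $s_n=\sigma_n^0$), and your upgrade from the $O(1)$ bound to a supremum over all $j\ge0$ is the same routine observation, using that $\cH(b)$ contains the polynomials for non-extreme $b$. No gaps.
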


The de Branges--Rovnyak spaces in which the powers of $z$ are bounded in norm form
an interesting class in their own right. For example, they are precisely the $\cH(b)$-spaces that
contain $H^\infty$. Also, they are characterized by the condition that $1/(1-|b|^2)\in L^1(\TT)$.
For more on this, see \cite[\S4]{Sa86}. 

For $\alpha>0$, the convergence of $\sigma_n^\alpha(h)$  to $h$ for every $h\in\cH(b)$ also
has implications for  $b$. To derive these, we need an explicit formula for $\|z^j\|_{\cH(b)}$,
which requires that we delve a little further into the theory
of de Branges--Rovnyak spaces. The details can be found in \cite{Sa94} and \cite{FM16b}. 

As mentioned earlier, if $b$ is a non-extreme point of the unit ball of $H^\infty$,
then $\log(1-|b|^2)\in L^1(\TT)$. This implies that
there exists a unique outer function $a$ on $\DD$ with $a(0)>0$
such that $|b|^2+|a|^2=1$ a.e.\ on $\TT$.
This function $a$ is sometimes called the
pythagorean complement of $b$. 
Writing $\phi:=b/a$, we obtain a function in the Smirnov class $N^+$,
namely the space of quotients of $H^\infty$-functions with outer denominators.
Conversely, all Smirnov functions are obtained in this way. There is thus 
a one-to-one correspondence $b\leftrightarrow \phi$,
between non-extreme points $b$ of the unit ball of $H^\infty$ and functions $\phi\in N^+$.
Expanding $\phi(z)$ as a Taylor series, say $\phi(z)=\sum_{j\ge0}c_jz^j$, we have
\begin{equation}\label{E:z^j}
\|z^j\|_{\cH(b)}^2=1+\sum_{i=0}^j |c_i|^2.
\end{equation}
A simple proof of \eqref{E:z^j} can be found for example in \cite[p.81]{Sa86}.

\begin{theorem}\label{T:phigrowth}
Let $b$ be a non-extreme point of the unit ball of $H^\infty$ and let $\phi$
be the corresponding function in the Smirnov class.
If $\alpha\ge0$ and $\sigma_n^\alpha(h)\to h$ (weakly or in norm)
for all $h\in \cH(b)$, then 
\begin{equation}\label{E:phi}
\phi(z)=O\Bigl((1-|z|)^{-\alpha-1/2}\Bigr) \quad(|z|\to1^-).
\end{equation}
\end{theorem}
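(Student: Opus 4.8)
The plan is to feed the limitation theorem (Theorem~\ref{T:deBrangeslimitation}) into the explicit formula \eqref{E:z^j} for $\|z^j\|_{\cH(b)}$, and then to convert the resulting control on the partial sums of $\sum_j|c_j|^2$ into a pointwise growth estimate for $\phi$ by a dyadic Cauchy--Schwarz argument. Concretely, Theorem~\ref{T:deBrangeslimitation} gives $\|z^j\|_{\cH(b)}=O(j^\alpha)$, hence $\|z^j\|_{\cH(b)}^2=O(j^{2\alpha})$, and \eqref{E:z^j} then yields
\[
M_j:=\sum_{i=0}^j|c_i|^2=\|z^j\|_{\cH(b)}^2-1=O(j^{2\alpha}) \quad(j\to\infty),
\]
so there is a constant $C$ with $M_j\le C(j+1)^{2\alpha}$ for all $j$. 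This averaged $\ell^2$-bound on the Taylor coefficients $c_j$ of $\phi$ is the only information about $b$ that enters the rest of the argument.

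Next, writing $r:=|z|$, I would start from the trivial bound $|\phi(z)|\le\sum_{j\ge0}|c_j|r^j$, valid uniformly in $\arg z$, and split the index set into dyadic blocks $B_k:=\{j:2^k\le j<2^{k+1}\}$. On each block, Cauchy--Schwarz together with $\sum_{j\in B_k}|c_j|^2\le M_{2^{k+1}}\lesssim 2^{2k\alpha}$ and $\sum_{j\in B_k}r^{2j}\le 2^k r^{2^{k+1}}$ gives
\[
\sum_{j\in B_k}|c_j|r^j\le\Bigl(\sum_{j\in B_k}|c_j|^2\Bigr)^{1/2}\Bigl(\sum_{j\in B_k}r^{2j}\Bigr)^{1/2}\lesssim 2^{k(\alpha+1/2)}r^{2^k}.
\]
Summing over $k$ reduces matters to estimating $\sum_{k\ge0}2^{k(\alpha+1/2)}r^{2^k}$.

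To evaluate this dyadic sum as $r\to1^-$, set $N:=1/(1-r)$. For the terms with $2^k\le N$ I bound $r^{2^k}\le1$ and sum the geometric progression $\sum_{2^k\le N}2^{k(\alpha+1/2)}\lesssim N^{\alpha+1/2}$, using $\alpha+1/2>0$. For the terms with $2^k>N$ I use $r^{2^k}\le e^{-(1-r)2^k}$ and substitute $u_k:=(1-r)2^k$, so that the $k$-th term equals $N^{\alpha+1/2}u_k^{\alpha+1/2}e^{-u_k}$; since the $u_k$ exceed $1$ and double with $k$, and $u\mapsto u^{\alpha+1/2}e^{-u}$ decays exponentially, this tail also sums to $O(N^{\alpha+1/2})$. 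Combining the two ranges yields $|\phi(z)|\lesssim N^{\alpha+1/2}=(1-|z|)^{-\alpha-1/2}$, which is exactly \eqref{E:phi}.

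The main obstacle is this last passage from the averaged control $M_j=O(j^{2\alpha})$ to a pointwise growth rate for $\phi$: one cannot in general assert the pointwise coefficient bound $|c_j|\lesssim j^{\alpha-1/2}$ that would make the estimate immediate, so the dyadic Cauchy--Schwarz is genuinely needed. The exponent $-\alpha-1/2$ emerges from balancing the block-length factor $2^{k/2}$ against the coefficient growth $2^{k\alpha}$, with the peak of the dyadic sum occurring at $2^k\approx 1/(1-r)$.
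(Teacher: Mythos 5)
Your proposal is correct, and it uses the same two key inputs as the paper: the limitation theorem (Theorem~\ref{T:deBrangeslimitation}) giving $\|z^j\|_{\cH(b)}=O(j^\alpha)$, and the identity \eqref{E:z^j} converting this into the averaged bound $\sum_{i=0}^j|c_i|^2=O(j^{2\alpha})$. Where you diverge is in the final step, the passage from this $\ell^2$ average to a pointwise bound on $\phi$. The paper applies Cauchy--Schwarz globally to the partial sums, obtaining $\sum_{i=0}^j|c_i|\le(j+1)^{1/2}\|z^j\|_{\cH(b)}=O(j^{\alpha+1/2})$, and then feeds this into an Abel summation $\sum_j|c_j|r^j=(1-r)\sum_j\bigl(\sum_{i\le j}|c_i|\bigr)r^j$, finishing with the standard asymptotic $\sum_j j^{\beta}r^j\asymp(1-r)^{-\beta-1}$. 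You instead apply Cauchy--Schwarz on dyadic blocks and estimate the resulting lacunary sum $\sum_k 2^{k(\alpha+1/2)}r^{2^k}$ by splitting at $2^k\approx 1/(1-r)$. Both routes land on the exponent $\alpha+1/2$ for the same reason (a factor $j^{1/2}$ from block length against $j^{\alpha}$ from the norm growth); the paper's version is marginally shorter because it outsources the summation to a standard power-series estimate, while yours is more self-contained and makes the location of the ``peak'' of the sum explicit. Your closing remark is also accurate: one genuinely cannot deduce the pointwise bound $|c_j|\lesssim j^{\alpha-1/2}$ from the averaged one, so some form of Cauchy--Schwarz is unavoidable, and the paper uses it for exactly this purpose.
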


\begin{proof}
As above, let us write  $\phi(z)=\sum_{j\ge0}c_jz^j$.
Summing by parts, we have
\[
|\phi(z)|\le \sum_{j\ge0}|c_j||z|^j=(1-|z|)\sum_{j\ge0}\Bigl(\sum_{i=0}^j|c_i|\Bigr)|z|^j \quad(|z|<1).
\]
By Cauchy--Schwarz and formula \eqref{E:z^j},
\[
\sum_{i=0}^j|c_i|\le (j+1)^{1/2}\Bigl(\sum_{i=0}^j|c_i|^2\Bigr)^{1/2}\le(j+1)^{1/2}\|z^j\|_{\cH(b)} 
\quad(j\ge0).
\]
From Theorem~\ref{T:deBrangeslimitation}, it follows that 
\[
\sum_{i=0}^j |c_i|=O(j^{\alpha+1/2})\quad(j\to\infty).
\]
Feeding this information back into the inequality for $\phi$, we find that there is a constant $C$ such that
\[
|\phi(z)|\le C(1-|z|)\sum_{j\ge0}j^{\alpha+1/2}|z|^j \quad(|z|<1).
\]
The conclusion \eqref{E:phi} follows.
\end{proof}

We conclude with two concrete examples.

\begin{corollary}\label{C:locD}
Let $\phi(z):=z/(1-z)$
and let $b$ be the corresponding non-extreme point of the unit ball of $H^\infty$.
If $\sigma^\alpha_n(h)\to h$ for all $h\in\cH(b)$, then $\alpha\ge 1/2$.
\end{corollary}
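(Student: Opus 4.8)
The plan is to apply Theorem~\ref{T:phigrowth} and then compare growth rates along a single radius. First I would confirm that the hypotheses are in place: the function $\phi(z)=z/(1-z)$ lies in the Smirnov class $N^+$, since its numerator is bounded and its denominator $1-z$ is a polynomial with no zeros in $\DD$, hence outer. Thus there is indeed a non-extreme point $b$ of the unit ball of $H^\infty$ with $b/a=\phi$, and the framework of \S\ref{S:H(b)} applies to this~$b$.

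Assuming $\sigma_n^\alpha(h)\to h$ for all $h\in\cH(b)$, Theorem~\ref{T:phigrowth} supplies the growth bound
\[
\phi(z)=O\bigl((1-|z|)^{-\alpha-1/2}\bigr)\qquad(|z|\to1^-).
\]
Against this upper bound I would set the actual size of $\phi$, which is easiest to read off along the positive real axis: for $z=r\in(0,1)$ we have $\phi(r)=r/(1-r)$, so $|\phi(r)|\asymp(1-r)^{-1}$ as $r\to1^-$.

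Combining the two, there is a constant $C$ with $(1-r)^{-1}\le C(1-r)^{-\alpha-1/2}$ for $r$ near~$1$; rearranging gives $(1-r)^{\alpha-1/2}\le C$ for all such~$r$. Letting $r\to1^-$ forces the exponent to be nonnegative, that is $\alpha-1/2\ge0$, so $\alpha\ge1/2$, which is the desired conclusion.

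There is no substantial obstacle here: the statement is essentially a one-line consequence of Theorem~\ref{T:phigrowth} once the correct comparison exponent is identified. The only point worth flagging is that the bound of Theorem~\ref{T:phigrowth} is phrased in terms of $1-|z|$, so it suffices to evaluate $\phi$ along any ray on which $\phi$ already attains the maximal order of growth; the positive real axis does exactly this, and no finer two-dimensional estimate of $\phi$ near the boundary is required.
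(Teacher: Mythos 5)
Your proof is correct and follows exactly the route the paper intends: the corollary is stated as an immediate consequence of Theorem~\ref{T:phigrowth}, obtained by comparing the bound $O\bigl((1-|z|)^{-\alpha-1/2}\bigr)$ with the actual growth $|\phi(r)|\asymp(1-r)^{-1}$ along the positive real axis. Your verification that $\phi\in N^+$ and the limiting argument $(1-r)^{\alpha-1/2}\le C$ are the right details to supply.
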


\begin{remark}
In this case $b$ can be determined explicitly, namely as
\[
b(z)=\frac{(1-w_0)z}{1-w_0z},
\]
where $w_0:=(3-\sqrt{5})/2$.
This example is taken from \cite{Sa97}, where it is shown that, with this particular choice of $b$,
the de Branges--Rovynak space $\cH(b)$ is equal to the local Dirichlet space $\cD(\delta_1)$
(with the same norm). 
It was shown in \cite{MPR21} that, if $\alpha>1/2$, 
then $\sigma_n^\alpha(h)\to h$ in norm for all $h\in\cD(\delta_1)$,
but that there exists $h_0\in\cD(\delta_1)$ such that $\sigma_n^{1/2}(h_0)\not\to h_0$.
Thus the constant $1/2$ in Corollary~\ref{C:locD} is sharp.
\end{remark}

\begin{corollary}
Let 
\[
\phi(z):=\exp\Bigl(\sqrt{\frac{1+z}{1-z}}\Bigr) \quad(z\in\DD).
\]
Then $\phi\in N^+$. If $b$ is the corresponding non-extreme point of the unit ball of $H^\infty$,
then there exists no $\alpha\ge0$ such that $\sigma_n^\alpha(h)\to h$ for all $h\in\cH(b)$.
\end{corollary}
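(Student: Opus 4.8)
The plan is to derive the conclusion from the growth estimate in Theorem~\ref{T:phigrowth}, after first checking that $\phi$ genuinely belongs to $N^+$, so that the correspondence $b\leftrightarrow\phi$ of \S\ref{S:H(b)} (and hence Theorem~\ref{T:phigrowth}) actually applies.

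First I would verify that $\phi\in N^+$. Write $\psi(z):=\sqrt{(1+z)/(1-z)}$, so that $\phi=e^\psi$. The M\"obius map $z\mapsto(1+z)/(1-z)$ carries $\DD$ onto the right half-plane $\{\Re w>0\}$, and the principal square root sends this half-plane into the sector $\{|\arg w|<\pi/4\}$; hence $\psi$ is holomorphic on $\DD$ with $\Re\psi>0$. Consequently $\log|\phi|=\Re\psi$ is a positive harmonic function, so it is the Poisson integral of a positive measure $\mu$ on $\TT$, and $\phi\in N^+$ precisely when $\mu$ is absolutely continuous (equivalently, when the least harmonic majorant of $\log^+|\phi|=\Re\psi$ is quasi-bounded). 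Since $\psi$ extends continuously to $\overline{\DD}\setminus\{1\}$, the measure $\mu$ is absolutely continuous on $\TT\setminus\{1\}$, so any singular part must be a point mass $c\delta_1$. But such a mass would force $\Re\psi(r)\gtrsim(1-r)^{-1}$ as $r\to1^-$, whereas $\Re\psi(r)=\sqrt{(1+r)/(1-r)}\sim\sqrt2\,(1-r)^{-1/2}$; hence $c=0$, $\mu$ is absolutely continuous, and $\phi\in N^+$.

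Next I would run the contrapositive of Theorem~\ref{T:phigrowth}. Suppose, for contradiction, that some $\alpha\ge0$ satisfies $\sigma_n^\alpha(h)\to h$ for all $h\in\cH(b)$. Then Theorem~\ref{T:phigrowth} gives $\phi(z)=O((1-|z|)^{-\alpha-1/2})$ as $|z|\to1^-$. It suffices to contradict this along the radius: for real $r\to1^-$ one has $|\phi(r)|=\exp(\sqrt{(1+r)/(1-r)})=\exp(\sqrt2\,(1-r)^{-1/2}(1+o(1)))$, and since the exponential of a positive constant times $(1-r)^{-1/2}$ outgrows every power of $(1-r)^{-1}$, we get $|\phi(r)|/(1-r)^{-\alpha-1/2}\to\infty$. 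This contradicts the $O$-bound for the given $\alpha$. As $\alpha\ge0$ was arbitrary, no such $\alpha$ exists.

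The main obstacle is the verification that $\phi\in N^+$, and specifically the exclusion of a singular atom of the Herglotz measure at $z=1$: this is exactly the boundary point at which both $\phi$ and $\Re\psi$ blow up, so one must compare the genuine radial growth rate $(1-r)^{-1/2}$ of $\Re\psi$ against the rate $(1-r)^{-1}$ produced by a point mass. Once this is settled, the summability statement itself is immediate, being a one-line growth estimate fed into Theorem~\ref{T:phigrowth}.
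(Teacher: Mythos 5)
Your proof is correct, and its overall skeleton is the same as the paper's: establish $\phi\in N^+$ so that the correspondence $b\leftrightarrow\phi$ applies, then contradict the bound $\phi(z)=O((1-|z|)^{-\alpha-1/2})$ of Theorem~\ref{T:phigrowth} using the radial growth $|\phi(r)|=\exp\bigl(\sqrt{(1+r)/(1-r)}\bigr)$, which beats every power of $(1-r)^{-1}$. The one place where you diverge is the verification that $\phi\in N^+$. The paper dispatches this in one line: $\psi(z):=\sqrt{(1+z)/(1-z)}$ belongs to $H^1$ (indeed $|\psi(re^{i\theta})|\asymp|1-re^{i\theta}|^{-1/2}$ is uniformly integrable in $\theta$), and the exponential of an $H^1$ function is outer, hence in $N^+$. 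You instead work with the Herglotz representation of the positive harmonic function $\Re\psi=\log|\phi|$, observe that any singular part of the representing measure must be an atom at $1$, and exclude that atom because it would force radial growth of order $(1-r)^{-1}$ whereas $\Re\psi(r)\sim\sqrt{2}\,(1-r)^{-1/2}$. Both arguments are sound; the paper's is shorter but leans on the fact that $e^\psi$ is outer when $\psi\in H^1$, while yours is more self-contained and makes visible exactly which boundary singularity has to be ruled out and why the exponent $1/2$ (as opposed to $1$) is what saves the day. The summability part of your argument coincides with the paper's intended application of Theorem~\ref{T:phigrowth}.
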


\begin{proof}
The function $\sqrt{(1+z)/(1-z)}$ belongs to $H^1$.
Its exponential is therefore an outer function,
so $\phi\in N^+$.
Now apply Theorem~\ref{T:phigrowth}.
\end{proof}

\begin{remark}
Thus, in this case, even though polynomials are dense in $\cH(b)$,
there is no Ces\`aro summability method that always converges.
Such examples were previously obtained in \cite{EFKMR16} and \cite{MR18}
using inductive constructions. Our method has the virtue that it yields a
(fairly simple) explicit function $\phi$ that does the trick.
\end{remark}


\subsection*{Acknowledgements}
We 
thank Ryan Gibara for helpful discussions.

\bibliographystyle{plain}      
\bibliography{bibliography}

\end{document}